\setlist{noitemsep,nolistsep,leftmargin=1.7em}
\providecommand{\figures}{false}
{ \ifthenelse{\equal{\figures}{false}} {#1}{\[ {\rm Figure \ missing !} \]} }{}
\def\id{\mathrm{id}}
\newcommand{\e}{{\mathfrak{e}}}
\newcommand{\smallM}{\scaleto{M}{4pt}}
\newcommand{\smallF}{\scaleto{F}{4pt}}
\newcommand{\smallMSR}{\scaleto{MSR}{4pt}}
\newcommand{\smallB}{\scaleto{B}{4pt}}
\newcommand{\smallW}{\scaleto{W}{4pt}}
\newcommand{\DEC}{\scaleto{DEC}{4pt}}
\renewcommand{\N}{\mathbb{N}}
\renewcommand{\k}{\mathbf{k}}
\newcommand{\cF}{{\mathcal F}}
\newcommand{\cT}{{\mathcal T}}
\renewcommand{\R}{\mathbb{R}}
\def\${|\!|\!|}
\def\Mren{\mathcal{R}}
\definecolor{Albicolor}{rgb}{0.8, 0.2, 0.2}
\newcommand{\alberto}[1]{\textbf{\textcolor{Albicolor}{#1}}}
\definecolor{cadmiumgreen}{rgb}{0.0, 0.42, 0.24}
\newenvironment{DIFnomarkup}{}{} 
\newtheorem{example}[lemma]{Example}
\newfont{\indic}{bbmss12}
\def\un#1{\hbox{{\indic 1}$_{#1}$}}
\colorlet{symbols}{blue!90!black}
\colorlet{testcolor}{green!60!black}
\colorlet{connection}{red!30!black}
\def\symbol#1{\textcolor{symbols}{#1}}
\def\symbol#1{\textcolor{symbols}{#1}}
\tikzset{
root/.style={circle,fill=black!50,inner sep=0pt, minimum size=3mm},
        circ/.style={circle,fill=white,draw=black,very thin,inner sep=.5pt, minimum size=1.2mm},
        round1/.style={fill=white,outer sep = 0,inner sep=2pt,rounded corners=1mm,draw,text=black,thin,minimum size=1.2mm},
          circ1/.style={circle,fill=red!10,draw=red,very thin,inner sep=.5pt, minimum size=1.2mm},
        rect/.style={fill=white,outer sep = 0,inner sep=2pt,rectangle,draw,text=black,thin,minimum size=1.2mm},
        rect1/.style={fill=white,outer sep = 0,inner sep=2pt,rectangle,draw,text=black,thin,minimum size=1.2mm},
        round2/.style={fill=red!10,outer sep = 0,inner sep=2pt,rounded corners=1mm,draw,text=black,thin,minimum size=1.2mm},
       round3/.style={fill=blue!10,outer sep = 0,inner sep=2pt,rounded corners=1mm,draw,text=black,thin,minimum size=1.2mm}, 
       mid arrow/.style={postaction={decorate}, decoration={markings, mark=at position 0.7 with {\arrow{>}}}},
        rect2/.style={fill=black!10,outer sep = 0,inner sep=2pt,rectangle,draw,text=black,thin,minimum size=1.2mm},
        dot/.style={circle,fill=black,inner sep=0pt, minimum size=2mm},
		dotred/.style={circle,fill=red!40,inner sep=0pt, minimum size=2mm, node font=\small},
		dotblue1/.style={circle,fill=blue!30,inner sep=0pt, minimum size=2mm, node font=\small},
		dotblue/.style={circle,fill=blue!30,inner sep=0pt, minimum size=1.5mm},
        var/.style={circle,fill=black!10,draw=black,inner sep=0pt, minimum size=3mm},
        kernel/.style={semithick},
         diag/.style={thin,shorten >=4pt,shorten <=4pt},
        kernel1/.style={thick, mid arrow},
        kernels/.style={snake=zigzag,shorten >=2pt,shorten <=2pt,segment amplitude=1pt,segment length=4pt,line before snake=2pt,line after snake=5pt,},
		kernels1/.style={snake=zigzag,segment amplitude=0.5pt,segment length=2pt},
		rho1/.style={densely dotted,semithick},
        rho/.style={densely dashed,semithick,shorten >=2pt,shorten <=2pt},
           testfcn/.style={dotted,semithick,shorten >=2pt,shorten <=2pt},
           visible/.style={draw, circle, fill, inner sep=0.25ex},
        renorm/.style={shape=circle,fill=white,inner sep=1pt},
        labl/.style={shape=rectangle,fill=white,inner sep=1pt},
        xic/.style={very thin,circle,fill=symbols,draw=black,inner sep=0pt,minimum size=1.2mm},
        xi/.style={very thin,circle,fill=blue!10,draw=black,inner sep=0pt,minimum size=1.2mm},
	xib/.style={very thin,circle,fill=blue!10,draw=black,inner sep=0pt,minimum size=1.6mm},
	xie/.style={very thin,circle,fill=green!50!black,draw=black,inner sep=0pt,minimum size=1mm},
	xid/.style={very thin,circle,fill=symbols,draw=black,inner sep=0pt,minimum size=1.6mm},
	edgetype/.style={very thin,circle,draw=black,inner sep=0pt,minimum size=5mm},
	nodetype/.style={very thick,circle,draw=black,inner sep=0pt,minimum size=5mm},
	kernels2/.style={very thick,draw=connection,segment length=12pt},
clean/.style={thin,circle,fill=black,inner sep=0pt,minimum size=1mm},	not/.style={thin,circle,fill=symbols,draw=connection,fill=connection,inner sep=0pt,minimum size=0.8mm},
	>=stealth,
        }
\tikzset{ individus/.style={scale=0.40,draw,circle,thick,fill=black!10},
 individu/.style={scale=0.40,draw,circle,thick,fill=black!50},       } 
\def\DeclareSymbol#1#2#3{\expandafter\gdef\csname MH@symb@#1\endcsname{\tikz[baseline=#2,scale=0.15,draw=symbols]{#3}}\expandafter\gdef\csname MH@symb@#1s\endcsname{\scalebox{0.7}{\tikz[baseline=#2,scale=0.15,draw=symbols]{#3}}}}
\def\<#1>{\csname MH@symb@#1\endcsname}
 \def\1{\mathbf{\symbol{1}}}
 \def\un#1{\hbox{{\indic 1}$_{#1}$}}
\def\one{\mathbf{1}}
\def\bulletalpha{
	\begin{tikzpicture}[scale=0.15,baseline=0.1cm]
		\node at (0,1.2)  [dotred] {};
	\end{tikzpicture}
	}
\def\bulletbeta{
		\begin{tikzpicture}[scale=0.15,baseline=0.1cm]
			\node at (0,1.2)  [dotblue] {};
		\end{tikzpicture}
	}
\def\bulletbetadec{
		\begin{tikzpicture}[scale=0.15,baseline=0.1cm]
			\node at (0,1.2)  [dotblue] {\mbox{\small $i$}};
		\end{tikzpicture}
	}
\def\doublebullet{
		\begin{tikzpicture}[scale=0.2,baseline=0.1cm]
		\node at (0,1) [dotblue] (root) {\mbox{\small $i$}};
		\node at (2,1) [dotblue] (center) {\mbox{\small $i$}};
	\end{tikzpicture}
	}
\DeclareMathAlphabet{\mathpzc}{OT1}{pzc}{m}{it}
\def\id{\mathrm{id}}
\begin{document}

\title{Exotic B-series representation of the Feller semigroup for It\^o diffusions and the MSR path integral}
\author{A. Bonicelli}

\institute{ Laboratoire de Probabilit\'es Statistique et Mod\'elisation,\\ Sorbonne Université, Paris, France \\
\email{bonicelli@lpsm.paris}
}

\maketitle

\begin{abstract}
	In this paper we consider the expansion of the Feller semigroup of a one-dimensional It\^o diffusion as a power series in time. Taking our moves from previous results on expansions labelled by exotic trees, we derive an explicit expression for the combinatorial factors involved, that leads to an exotic Butcher series representation. A key step is the extension of the notion of tree factorial and Connes-Moscovici weight to this richer family of rooted trees. The ensuing expression is suitable for a comparison with the perturbative path integral construction of the statistics of the diffusion, known in the literature as Martin-Siggia-Rose formalism. Resorting to multi-indices to represent pre-Feynman diagrams, we show that the latter coincides with the exotic B-series representation of the semigroup, giving it a solid mathematical foundation. 
\end{abstract}

\setcounter{tocdepth}{2}
\tableofcontents

\section{Introduction}

In its seminal paper \cite{butcher1963coefficients} Butcher introduced a series representation of the Taylor expansion of Runge–Kutta numerical integrators for ordinary differential equations, which is nowadays referred to as Butcher series. It lays its foundations on a one-to-one correspondence between non-planar rooted trees and specific vector fields discovered by Cayley \cite{cayley1857xxviii} and it consists of representing the numerical scheme as a power series in the time step with coefficients completely characterized by the labelling trees. To wit, according to \cite{Hairer2006}, given the ODE
\begin{equation*}
	\dot{y}=g(y)\,,\qquad y(0)=y_0\,,
\end{equation*}
a numerical scheme $Y_{n+1}=\Psi_n[y_0,g,\delta]$ of step $\delta$ is a B-series if it can be represented as
\begin{equation*}
	B(a,g,y_0,t)=\sum_{\tau\in\cT}\frac{a(\tau)}{\sigma(\tau)}\Upsilon_{u_0}^g(\tau)\,t^{|\tau|}\,,
\end{equation*}
where $\cT$ is the space of non-planar rooted trees, \emph{i.e.}, connected and simply connected graphs $\tau\in\cT$ identified with a set of vertices $\mathcal{V}(\tau)$ and a set of edges $\mathcal{E}(\tau)$ such that for each $e\in\mathcal{E}(\tau)$ there exist $e_+,e_-\in\mathcal{V}(\tau)$ such that $e=(e_+,e_-)$. Each tree has a distinguished vertex, the root, having no outgoing edges. All other vertices have a single outgoing edge and an arbitrary number of incoming edges, the latter being oriented towards the root. Here $\sigma(\tau)$ accounts for the symmetries of the tree, $|\tau|$ denotes the number of vertices of $\tau$ and $\Upsilon_{u_0}^g(\tau)$, called the elementary differential, realizes the aforementioned isomorphism between trees and vector fields. The functional $a:\mathcal{T}\to\R$ is determined by the coefficients of the numerical scheme. This groundbreaking idea paved the way for the algebraic study of numerical integrators of ordinary differential equations. 
We refer the reader interested in a comprehensive introduction to B-series to the monograph \cite{butcher2021b}, while a detailed account of the algebraic structures involved can be found in \cite{chartier2010algebraic}.

Since Butcher's seminal paper there has been an increasing interest on generalizations of B-series to accommodate different aspects of the numerical analysis of differential equations, featuring larger classes of combinatorial trees. In this paper we are mainly interested in the generalization of Butcher series to the realm of stochastic differential equations, where they proved efficient as a tool for strong and weak error analysis \cite{komori1997rooted, rossler2006rooted} and the derivation of order conditions for sampling the invariant measure of an ergodic SDE \cite{laurent2020exotic, laurent2022order, bronasco2024hopf}. The latter problem prompted the introduction of exotic S-series as a generalization of B-series over exotic forests, \emph{i.e.}, forests of trees endowed with a decoration that specifies the deterministic or stochastic origin of a vertex. In addition, such decoration codifies the information that pairs of stochastic vertices are contracted when taking expectations. 
If we consider the exact solution of a generic stochastic differential equation, it was shown in \cite[\S 5]{platen2013} that an iterative application of It\^o's lemma provides us with a so call It\^o-Taylor expansion of $f(u_t)$ over multiple stochastic integrals labelled by hierarchical sets. Yet, if we are interested in expectations with respect to the law of the Wiener process, such generalization of Taylor's expansion greatly simplifies, leaving us with the sole deterministic iterated integrals. In \cite{rossler2004stochastic}, this observation led to the reformulation of the It\^o-Taylor expansion of $\mathbb{E}^{u_0}[f(u_t)]$ as an exotic S-series over labelled, exotic forests. 

In this paper we search for an explicit form of the functional on trees that identifies an exotic B-series representation of the Feller semigroup of the solution to the Cauchy problem on $[0,T]\subset\R_+$
\begin{align}\label{Eq: SDE}
	du=\alpha(u)\,dt+\beta(u)\,dW\,, \qquad	u(0)=u_0\,,
\end{align}
characterized by smooth and homogeneous drift and diffusion $\alpha,\beta\in C^\infty(\R)$ and fixed initial condition $u_0\in\R$. Here $\{W_t\}_{t\geq0}$ is a Wiener process on the underlying probability space $(\Omega,\mathcal{F},\mathbb{P})$. 
Assuming that a unique solution exists on $[0,T]$, then the statistics of the solution is described by the action of the semigroup $T_t:C_0(\R)\to C_0(\R)$ generated by the operator $L:=\alpha\partial_x+\frac{1}{2}\beta^2\partial_x^2$, 
\begin{equation*}
	T_tf(u_0):=\mathbb{E}^{u_0}[f(u_t)]\,,\hspace{2cm}f\in C_0(\R)\,.
\end{equation*} 
where $C_0(\R)$ is the space of continuous functions vanishing at infinity. 
Hereafter, $\mathbb{E}^{u_0}$ will denote the conditional expectation with respect to the constraint $u(0)=u_0$.
To derive a closed form of the combinatorial factor that characterizes the B-serie expansion of $\mathbb{E}^{u_0}[f(u_t)]$ some work is required. 
To wit, the iterative application of It\^o's lemma at the heart of the It\^o-Taylor expansion suggests that the contributions to the expansion are obtained by an operation that we call natural growth, first considered in \cite{kreimer1999chen} in the context of renormalization of quantum field theories. It consists of iteratively grafting single vertices of colour $\alpha$ and double vertices of colour $\beta$ in all possible ways, starting from the root. This operation translates at the level of trees the action of the two contribution to the differential operator $L$. Therefore we must find a way of counting, for each exotic tree, the number of ways of obtaining it via natural growth. Even if in the non-decorated setting this number is well-known in the literature as the Connes-Moscovici weight, see \cite{brouder2000runge}, to our knowledge no analogous notion has been proposed for the class of exotic trees that we use. The task of finding its suitable extension is addressed in Section \ref{Sec: exotic s-series} and leads to the following B-series representation of the Feller semigroup. 
\begin{proposition}\label{Prop: key result}
	Let $\alpha,\beta,f\in C^\infty(\R)$ be real analytic functions such that  \eqref{Eq: SDE} admits a unique solution $u$  with initial condition $u_0\in\R$ on $[0,T]$. Then the Feller semigroup associated to the solution of \eqref{Eq: SDE} admits the formal power series representation
	\begin{equation}\label{Eq: convergent exact expansion expectation intro}
		\mathbb{E}^{u_0}[f(u_t)]=\sum_{\tau\in\cT_{\alpha,\beta}^\mathfrak{e}}\frac{|\tau|}{\sigma_\mathfrak{e}(\tau)\tau!}\Upsilon_{u_0}^{\alpha,\beta,f}(\tau)\,t^{|\tau|-1}\,,
	\end{equation}
	where the sum runs over all exotic coloured trees lying in $\cT_{\alpha,\beta}^\e$.
\end{proposition}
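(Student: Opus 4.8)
The plan is to obtain \eqref{Eq: convergent exact expansion expectation intro} from the time-Taylor expansion of the semigroup and then to pin down its coefficients by a natural-growth counting argument. Since $T_tf$ solves the backward Kolmogorov equation $\partial_t T_t f = L\,T_t f$ with $L=\alpha\partial_x+\tfrac12\beta^2\partial_x^2$, one has the formal identity $T_tf=e^{tL}f=\sum_{n\ge0}\tfrac{t^n}{n!}\,L^nf$, which I would record first. The real-analyticity of $\alpha,\beta,f$ together with the Lipschitz bound \eqref{Eq: Lipschitz} supplies the estimates on $L^nf$ needed to upgrade this to a genuinely convergent power series in $t$ on a nontrivial time interval, matching the convergence claimed in the introduction; the combinatorial heart of the statement, however, is the re-expansion of $\tfrac1{n!}L^nf$ over the family $\cT_{\alpha,\beta}^\e$ of exotic coloured trees.

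I would then make precise the dictionary between repeated applications of $L$ and natural growth. Writing $L=L_\alpha+L_\beta$ with $L_\alpha=\alpha\partial_x$ and $L_\beta=\tfrac12\beta^2\partial_x^2$, each application of $L$ to an elementary differential acts, via the Leibniz rule, as a grafting operator: $L_\alpha$ attaches a single $\alpha$-coloured vertex to one of the existing factors, while $L_\beta$ attaches a double $\beta$-coloured vertex whose two half-edges are the contracted stochastic legs recorded by the exotic decoration (the prefactor $\tfrac12$ being absorbed into the interchange symmetry of those two legs). Iterating $n=|\tau|-1$ times and reading off the monomials shows that $L^nf$ is a linear combination of the elementary differentials $\Upsilon_{u_0}^{\alpha,\beta,f}(\tau)$ indexed by exotic coloured trees $\tau$ with $|\tau|=n+1$, the root carrying $f$, each weighted by the number of grafting histories producing it. This is exactly the growth operation flagged in the introduction, and the monotone vertex labellings of the exotic S-series of \cite{rossler2004stochastic} are precisely these histories.

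The decisive step, carried out in Section \ref{Sec: exotic s-series}, is to count them. I would define the exotic Connes-Moscovici weight $\mathrm{CM}_\e(\tau)$ as the number of natural-growth sequences yielding $\tau$, equivalently the number of monotone labellings of $\tau$ compatible with its tree order and decoration, so that reorganising $\tfrac1{n!}L^nf$ by underlying shape gives the coefficient of $\Upsilon_{u_0}^{\alpha,\beta,f}(\tau)$ as $\mathrm{CM}_\e(\tau)/(|\tau|-1)!$. The proposition then collapses to the single combinatorial identity
\begin{equation*}
	\mathrm{CM}_\e(\tau)=\frac{|\tau|!}{\sigma_\e(\tau)\,\tau!}\,,
\end{equation*}
because $\mathrm{CM}_\e(\tau)/(|\tau|-1)!=|\tau|/(\sigma_\e(\tau)\,\tau!)$. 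To establish it I would argue by induction on $|\tau|$: deleting the last-grafted vertex (a leaf, or one leg of a double vertex) splits $\tau$ into strictly smaller exotic trees, and one must define the extended factorial $\tau!$ and symmetry factor $\sigma_\e(\tau)$ so that both sides obey the same recursion under this deletion. This is the step I expect to be the main obstacle: the double $\beta$-vertices carry the extra leg-interchange automorphism, and the stochastic contractions couple distinct branches, so the naive multiplicative recursions for the classical $\tau!$ and $\sigma(\tau)$ must be corrected to register these additional automorphisms and pairings. Once the recursions for $\mathrm{CM}_\e$, $\tau!$ and $\sigma_\e$ are shown to agree on the two generators of natural growth, the identity follows, and combining it with the Taylor expansion and the analytic bounds yields \eqref{Eq: convergent exact expansion expectation intro}.
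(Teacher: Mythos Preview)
Your overall architecture matches the paper's: expand $T_tf=\sum_n\frac{t^n}{n!}L^nf$, identify $L$ with the grafting of $\bulletalpha$ and $\doublebullet$, and reduce everything to proving the Connes--Moscovici identity $CM^\e(\tau)=\frac{|\tau|!}{\sigma_\e(\tau)\,\tau!}$. Two points need correction.

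First, the convergence claim is an overreach. The proposition asserts only a \emph{formal} power series; the paper is explicit (Section~\ref{Sec: asymptotic}) that analyticity of $\alpha,\beta,f$ and the Lipschitz condition yield only an asymptotic expansion, because the number of trees with $n$ vertices grows like $n!$ and cancels the $1/n!$ from the time integral. Convergence holds only in special cases (linear coefficients), so do not promise it in general.

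Second, and more substantively, your recursion step is off. You propose to delete ``a leaf, or one leg of a double vertex'', but removing a single $\beta$-leaf from a $\doublebullet$ pair leaves an unpaired stochastic vertex, which is not an element of $\cT_{\alpha,\beta}^\e$. The paper's recursion (Definition~\ref{Def: tree factorial} and Lemma~\ref{Lem: CM exotic}) always removes the \emph{entire} pair $\doublebullet$ at once. Correspondingly, the $\tfrac12$ from $L_\beta$ is not simply ``absorbed into the interchange symmetry'': the paper defines $CM^\e(\tau)$ as the number of natural-growth histories \emph{weighted by} $2^{-|\mathcal V_\beta(\tau)|/2}$, and then establishes the Kreimer-type recursion
\[
\sigma_\e(\tau)\,CM^\e(\tau)=\sum_{\tau'\in\cT_\e^-(\tau)}\sigma_\e(\tau')\,CM^\e(\tau')
\]
by a careful orbit count. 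The factor of $2$ in the number of ways to graft $\doublebullet$ on two vertices does \emph{not} always cancel against a symmetry of $\tau$; it depends on whether the two grafting sites lie on a symmetric branch, and the paper tracks this via $\sigma_\e(Br(v,w))$ in equations~\eqref{Eq: cool rel 1}--\eqref{Eq: cool rel 2}. Likewise, the tree factorial cannot be defined by the usual multiplicative recursion $\tau!=|\tau|\prod_i\tau_i!$ because the exotic decoration couples branches non-locally; the paper adopts the leaf-first definition~\eqref{Eq: tree factorial kreimer} precisely for this reason. Your closing paragraph correctly anticipates that something non-trivial is needed here, but the specific mechanisms you sketch (single-leg deletion, absorbing $\tfrac12$ into leg interchange) are not the ones that work.
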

This result is of independent interest and could find application as a refinement of the core ideas of \cite{laurent2022order, laurent2020exotic}. In addition, the study of sophisticated algebraic structures characterizing the space of exotic aromatic trees \cite{laurent2022order} could translate in useful operations on the semigroup, such as the change of integration prescription.

This representation of the expectations for the solution of an SDE resonates with the perturbative study of their representation in terms of functional integrals. In the early 70s Martin, Siggia and Rose \cite{martin1973statistical} proposed for the first time a representation of the random observables of a disordered system as operators endowed with suitable commutation relations with auxiliary, conjugate fields. The Martin-Siggia-Rose formalism was further elaborated by Janssen, De Dominicis and Peliti \cite{janssen1976lagrangean, de1978field}, who derived an alternative representation of the generating functional for the probability distribution of the stochastic dynamics as an infinite-dimensional integral over a suitable space of paths with constrained initial condition. In analogy with the path integral formulation of Euclidean quantum field theories, the equations of motion of the (random) fields identify a class of actions used to define the associated Gibbs measure. This fruitful approach opened up the path to the plethora of techniques adopted to study perturbative quantum field theories, as \emph{e.g.} Feynman diagrams expansions of expectations \cite{de2006random} and renormalization group analysis \cite{cardy1996renormalisation}. 

Despite its effectiveness as a tool to compute quantities of interest for the aforementioned models, this approach suffers from the problem that its derivation is not entirely rigorous, since it often involves a discretization procedure in time that, when talking the continuum limit, would yield functional integrals with respect to a non-existing infinite dimensional Lebesgue measure. Efforts to circumvent this issue have been made in \cite{butko2016feynman}, where the Feller semigroup for a large class of stochastic processes is constructed via a finite-dimensional approximation based on the Chernoff's theorem. 
Complementarily, in \cite{bonicelli2023algebraic} the authors addressed the problem of showing that, at every perturbative order, expectations of functions of the solution labelled by suitable Feynman diagrams coincide with the ones obtained by directly expanding the solution as a formal power series of the perturbative parameter. Their approach relies on techniques typical of algebraic quantum field theory \cite{rejzner2016perturbative} and from their applications to singular stochastic PDEs \cite{bonicelli2023schrodinger, bonicelli2023stochastic, bonicelli2023microlocal}
to systematically compute the contractions forming Feynman diagrams via a deformation of the algebraic structures involved. 

The second part of this paper aims at extending the results of \cite{bonicelli2023algebraic} to the case of a generic It\^o diffusion by showing that the formal series expansion of the path integral expectations coincides with the exotic B-series representation introduced above. In the following we give a concise overview of the main idea at the heart of the Martin-Siggia-Rose (MSR) construction. The MSR expectation of the composition between $f\in C^\infty(\R)$ and the solution of \eqref{Eq: SDE} at times $t\in[0,T]$ is identified with the infinite-dimensional integral
\begin{equation}\label{Eq: path integral expectation}
	\mathbb{E}^{\smallMSR}[f(u_t)]:=\int_{\mathcal{C}\times \tilde{\mathcal{C}}}e^{-S[\psi,\tilde{\psi}]}f(\psi_t)\,\mathcal{D}\psi\mathcal{D}\tilde{\psi}\,,
\end{equation}
over a space of paths $(\psi,\tilde{\psi})\in{\mathcal{C}\times \tilde{\mathcal{C}}}$ such that $\psi(0)=u_0$ and whose regularity depends on the explicit form of the coefficients. The action functional has the form 
\begin{align}\label{Eq: action}
	S[\psi&,\tilde{\psi}]:=\int_0^T\tilde{\psi}_s\dot{\psi}_s\,ds\nonumber\\
	&-\underbrace{\int_0^T\tilde{\psi}_s(\alpha(\psi_s)-\theta_0\beta^{(1)}(\psi_s)\beta(\psi_s))\,ds-\frac{1}{2}\int_0^T\tilde{\psi}_s^2\beta^2(\psi_s)\,ds+\theta_0\int_0^T\alpha^{(1)}(\psi_s)\,ds}_{S_{int}}\,.
\end{align}
where $\theta_0\in[0,1]$ is a residual freedom linked to the choice of stochastic integration when solving \eqref{Eq: SDE}. The path $\tilde{\psi}$ is an auxiliary field with no physical meaning that arises from the discrete approximation of \eqref{Eq: path integral expectation} as the dual variable of $\psi$. This representation led to the interpretation of $e^{-\int_0^T\tilde{\psi}_s\dot{\psi}_s\,ds}\mathcal{D}\psi\mathcal{D}\tilde{\psi}$ as a Gaussian measure over the Hilbert space $H^1([0,T])\oplus H^1([0,T])$ whose quadratic form reads
\begin{equation}\label{Eq: free part MSR}
	\int_0^T\tilde{\psi}_s\dot{\psi}_s\,ds=\langle(\psi,\tilde{\psi}),T(\psi,\tilde{\psi})\rangle_{L^2([0,T])\oplus L^2([0,T])}\,,\qquad T:=\begin{pmatrix}
		0 & d_t^{\ast} \\
		d_t & 0
	\end{pmatrix}\,.
\end{equation}
where $d_t$ denotes the first derivative and $d_t^\ast$ its adjoint on $H^1([0,T])$.
Namely, one would interpret $\{(\psi_t,\tilde{\psi}_t)\}_{t\in[0,T]}$ as a Gaussian vector on $(\bar{\Omega},\bar{\mathcal{F}},\bar{\mathbb{P}})$ such that
\begin{align}
	&\overline{\mathbb{E}}[\psi_t]=u_0,\qquad\overline{\mathbb{E}}[\tilde{\psi}_t]=0\qquad\forall t\in[0,T]\,,\nonumber\\
	&\overline{\mathbb{E}}[\psi_t\psi_s]=\overline{\mathbb{E}}[\tilde{\psi}_t\tilde{\psi}_s]=0\,,\qquad\overline{\mathbb{E}}[\psi_t\tilde{\psi}_s]=\Theta(t-s)\,,\qquad\forall t,s\in[0,T]\,.\label{Eq: properties random fields}
\end{align}
where $\Theta\in\CD'(\R)$ is the Heaviside theta function. 
We denote by $\overline{\mathbb{E}}$ the expectation with respect to the law of the formal free fields to distinguish it from the Gaussian measure of the noise. However, such Gaussian fields do not exists in the continuum setting since the form $T$ is not positive definite, invalidating the use of Wick's theorem to perform explicit computations. Core of this paper is to explain why the erroneous application of this theorem leads to correct results. 
 Hence, let us expand the path integral around the free action via a contraction map $\Gamma_\Theta$ that identifies pairs $(\psi,\tilde{\psi})$ with the integral kernel of $\Theta$ to express the MSR expectations as  
\begin{align}\label{Eq: perturbative expectation action}
		\mathbb{E}^{\smallMSR}[f(u_t)]&=\Gamma_\Theta\left[e^{-S_{int}[\psi,\tilde{\psi}]}f(\psi_t)\right]\big\vert_{\substack{\psi=u_0\\\tilde{\psi}=0}}\,.
\end{align} 
A rigorous definition of this expression will be given in Section \ref{Sec: deformation maps}.
We are borrowing techniques from algebraic quantum field theory in its formulation based on distribution-valued functionals \cite{rejzner2016perturbative}. The idea is to implement the information that fields $(\psi,\tilde{\psi})$ should contract into the kernel $\Theta$ by enriching the algebraic structure of suitable classes of distribution-valued functionals via a deformation. As first proposed in \cite{bonicelli2023algebraic}, this allows to implement a form of Wick's theorem suitable to compute the perturbative expectation of expectation values as per \eqref{Eq: perturbative expectation action}.

The expression in \eqref{Eq: perturbative expectation action} can be made explicit by Taylor expanding $\alpha$, $\beta$ and $f$ in powers of the argument around a fixed initial condition $u_0\in\R$.
As a result, the MSR expectation in \eqref{Eq: perturbative expectation action} boils down to a series of contributions of the form
\begin{align}\label{Eq: expectation products}
	\Gamma_{\Theta}\prod_{\substack{n,k\in\N\\
			(k_1,k_2)\in\N^2}}\Bigl(\int_0^T \tilde{\psi}_s(\psi_s-u_0)^{n}\,ds\Bigr)^{\gamma_\alpha(n)}\Bigl(\int_0^T \tilde{\psi_s}^2(\psi_s-u_0)^{k_1+k_2}\,ds\Bigr)^{\gamma_\beta(k_1,k_2)}(\psi_t-u_0)^{\gamma_\bullet(k)}\big\vert_{\substack{\psi=u_0\\ \tilde{\psi}=0}}\,,
\end{align}
for integers $\gamma_\alpha(n),\gamma_\bullet(k), \gamma_{\beta}(k_1,k_2)\in\N$. Then, a far reaching generalization of the Wick's theorem in Gaussian integration theory, known as the Linked Cluster Theorem \cite{brueckner1955many, brouder2009quantum}, guarantees that this kind of expectations give rise to connected Feynman diagrams obtained via contraction of the interaction vertices:
	\begin{align}\label{Eq: interaction vertices}
	\alpha^{(1)}(u_0)\,\tilde{\psi}\psi\quad\leftrightarrow\quad \begin{tikzpicture}[scale=0.2,baseline=0.2cm]
		\node at (-4,5)  [] (leftlr) {};
		\node at (-4,2)  [dotred] (left) {};
		\node at (-4,-1)  [] (leftr) {};
		\draw[kernel1] (leftlr) to     node [sloped,below] {\small }     (left);
		\draw[kernel1, red] (left) to     node [sloped,below] {\small }     (leftr);  
	\end{tikzpicture}
	\hspace{2cm}
	\beta(u_0)\beta^{(1)}(u_0)\,\tilde{\psi}^2\psi\quad\leftrightarrow\quad
	\begin{tikzpicture}[scale=0.2,baseline=0.2cm]
		\node at (-2,2)  [dotblue1] (left) {};
		\node at (-2.5,-1)  [] (leftr) {};
		\draw[kernel1, red] (leftr) to     node [sloped,below] {\small }     (left);
		\node at (0.5,2)  [dotblue1] (right) {};
		\node at (1,-1)  [] (rightr) {};
		\node at (0.5,5)  [] (rightl) {};
		\draw[kernel1, red] (right) to     node [sloped,below] {\small }     (rightr);    
		\draw[kernel1] (rightl) to     node [sloped,below] {\small }     (right);    
		\draw[rho1] (left) to     node [sloped,below] {\small }     (right); 
	\end{tikzpicture}
\end{align}
The resulting graph is what we call a Feynman diagram, the edges representing the propagator $\Theta$ and coloured vertices being the integration variables. The root plays a special role as the only non-integrated point.  
 We stress once again that this graphical representation of the contributions to \eqref{Eq: perturbative expectation action}, which offers an efficient way of accounting for the contributions to expectations, rests upon the heuristics that the free part of the action induces a Gaussian measure. Goal of this paper is to show that the resulting series coincide with the ones derived rigorously via a suitable representation of the semigroup of the It\^o diffusion \eqref{Eq: SDE}.
Departing from the diagram-based strategy of \cite{bonicelli2023algebraic}, we adopt the point of view of \cite{bruned2025renormalising} on the construction and renormalization of cumulants of a scalar quantum field theory. Building on ideas of \cite{berglund2022perturbation}, the authors realized how the information needed to perform the BPHZ renormalization of the theory is already stored at the level of \emph{pre-Feynman diagrams}, \emph{i.e.}, the collections of vertices with free legs identified in \eqref{Eq: interaction vertices}. The major advantage is a substantial simplification in the combinatorics of the problem, which allows for a reformulation of the Hopf-algebraic construction of \cite{berglund2022perturbation} in terms of dual structures.
In the series of papers \cite{linares2024diagram, otto2024priori} the authors introduced the idea of replacing combinatorial trees with \emph{multi-indices} in an alternative formulation of the theory of regularity structures for solving a large class of singular stochastic PDEs. This time a multi-index represents the number of vertices of the pre-Feynman diagrams with a given number of free legs. On account of the peculiar form of the interaction in the MSR action, we identify a suitable space of \emph{Feynman multi-indices} $\mathcal{M}_{\smallF}$.
We derive a result analogous to \cite[Lem. 4.12]{bruned2025renormalising} on a formal multi-index decomposition of \eqref{Eq: perturbative expectation action} as
\begin{equation}\label{Eq: MSR Butcher}
	\mathbb{E}^{\smallMSR}[f(u_t)]=\sum_{\gamma\in\mathcal{M}_{\smallF}}\frac{\Upsilon_{u_0}^{\alpha,\beta,f}(z^\gamma)}{\sigma_{\smallF}(z^\gamma)}\Pi_{t}(z^\gamma)\,,
\end{equation}
where $\Upsilon_{u_0}^{\alpha,\beta,f}$ accounts for the derivatives of $\alpha,\beta$ and $f$ coming from their Taylor expansion, while $\sigma_{\smallF}(z^\gamma)$ is an integer that encodes the combinatorial aspects of the pre-Feynman diagram linked to symmetries. The \emph{realization map} $\Pi_t(z^\gamma)$ plays a crucial role as it coincides with the combination of integrals associate to all Feynman diagrams that we can obtain form the pre-Feynman diagram $z^\gamma$. 

Returning to the path integral expectations and given the interpretation of multi-indices as pre-Feynman diagrams, we observe that, as highlighted in \cite{bruned2025renormalising}, one can always switch to Feynman diagrams by expressing \eqref{Eq: expectation products} as a combination of integrals labelled by exotic trees. Denoting by $\tau$ any of the exotic rooted trees represented by a multi-index $z^\gamma\in\mathcal{M}_{\smallF}$, it is rather natural to introduce a realization map on exotic coloured trees of the form
\begin{align}\label{Eq: realization map trees}
	\Pi_t^{\mathfrak{e}}&(\tau):=\un{(|\mathcal{V}_\beta(w)|=2k)}\int_{[0,T]^{|\tau|-1}}\prod_{\substack{e\in\mathcal{E}(\tau_i)\\ e_+=\rho_{\tau_i}}}\Theta(t-t_{e_-})\prod_{\substack{e\in\mathcal{E}(w)\\ e_+\neq\rho_\tau}}\Theta(t_{e_+}-t_{e_-})\prod_{\substack{(v,w)\in\mathcal{V}_\beta(\tau)\\\mathfrak{e}(v)=\mathfrak{e}(w)}}\delta(t_{v_1}-t_{v_2})\prod_{u\in\mathcal{V}(\tau)\setminus\{\rho(\tau)\}}dt_u\,.
\end{align}
The a priori ill-defined products of distributions in \eqref{Eq: realization map trees} can be given proper meaning via a regularization procedure which boils down to assigning a value to $\lim_{t\to 0}\Theta(t)$. Being interested in solutions \emph{a la} It\^o, we will stick to the choice $\Theta(0)=0$.
As it stands, the exotic B-series \eqref{Eq: convergent exact expansion expectation intro} is not expressed in terms of the realization map $\Pi^\mathfrak{e}$, hence preventing a direct comparison with the MSR expectations. In addition, the integrals in \eqref{Eq: realization map trees} do not fall under the scope of applicability of the branched rough path theory formulated in \cite{gubinelli2010ramification} due to the nonlocal effect of the exotic decoration, which are responsible for the presence of Dirac deltas in the multiple integral. One of the main results of the second part of this work is to show that 
\begin{equation*}
	\Pi_t^\mathfrak{e}(\tau)=\frac{|\tau|}{\tau!}t^{|\tau|-1}\,,\qquad\forall \tau\in\cT_{\alpha,\beta}^{\mathfrak{e}}\,.
\end{equation*}
where $\tau!$ is a suitable generalization to exotic trees of the tree factorial, ubiquitous in the B-series literature as a generalization of the factorial of integers that accounts for how sparse the tree is. We shall denote by $\mathbb{E}^{\smallF}[f(u_t)]$ the form of \eqref{Eq: convergent exact expansion expectation intro} in terms of the realization map $\Pi^\mathfrak{e}$, which reads
\begin{equation*}
	\mathbb{E}^{\smallF}[f(u_t)]=\sum_{\tau\in\cT_{\alpha,\beta}^\mathfrak{e}}\frac{\Upsilon_{u_0}^{\alpha,\beta,f}(\tau)}{\sigma_\mathfrak{e}(\tau)}\,\Pi_t^\e(\tau)\,.
\end{equation*}
To conclude, the last part of the paper addresses the translation of the tree-based expansion in term of Feynman multi-indices. Following the lines of \cite{bruned2025renormalising} we introduce suitable maps that intertwine between exotic trees and multi-indices:
\begin{equation*}
	\Psi^\e:\cT_{\alpha,\beta}^\mathfrak{e}\to\mathcal{M}_{\smallF}\,,\qquad \Phi^\e:\mathcal{M}_{\smallF}\to\langle\cT_{\alpha,\beta}^\mathfrak{e}\rangle\,.
\end{equation*}
The \emph{counting map} $\Psi^\e$ simply constructs a monomial $z^\gamma$ by adding an abstract variable $z_{\alpha,n}$ for each vertex of colour $\alpha$ with $n$ incoming edges and an abstract variable $z_{\beta, (k_1,k_2)}$ for each pair of vertices with colour $\beta$ with $k_1$ and $k_2$ incoming edges, respectively. 
The action of $\Phi$ on $z^\gamma\in\mathcal{M}_{\smallF}$ is defined implicitly as a weighted combination of trees $\tau_i\in\cT_{\alpha,\beta}^\mathfrak{e}$ such that $\Psi(\tau)=z^\gamma$. The weights involve the symmetry factors of $\tau_i$ and $z^\gamma$, respectively and they are crucial to show that
\begin{equation*}
	\Pi_{t}(\Phi^\e(\tau))=\Pi_t^\mathfrak{e}(\tau)\,,\qquad\forall \tau\in\cT_{\alpha,\beta}^\e\,.
\end{equation*}
With this identity at hand, we obtain the sought after identity between the MSR expectations and the exotic B-series at the heart of this work.
\begin{proposition}\label{Prop: main proposition}
	Under the assumptions of Proposition \ref{Prop: key result}, the path integral expectation $\mathbb{E}^{\smallMSR}[f(u_t)]$ coincides with the exotic B-series representing $\mathbb{E}^{u_0}[f(u_t)]$ as formal power series. 
\end{proposition}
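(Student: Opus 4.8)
The plan is to prove the identity by threading together the three expansions already at our disposal—the rigorously derived exotic B-series of Proposition~\ref{Prop: key result}, the tree-based form $\mathbb{E}^{\smallF}$ built from the realization map $\Pi^\e$, and the multi-index expansion \eqref{Eq: MSR Butcher} of the path integral. Since all three objects are formal power series in $t$, the equality is understood coefficient by coefficient, i.e. at each fixed perturbative order $|\tau|-1$, and it suffices to match the corresponding sums.

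First I would recast the B-series of Proposition~\ref{Prop: key result} in terms of the realization map. Inserting the identity $\Pi_t^\e(\tau)=\frac{|\tau|}{\tau!}t^{|\tau|-1}$, valid for every $\tau\in\cT_{\alpha,\beta}^\e$, the combinatorial weight $\frac{|\tau|}{\sigma_\e(\tau)\,\tau!}t^{|\tau|-1}$ in \eqref{Eq: convergent exact expansion expectation intro} collapses to $\frac{1}{\sigma_\e(\tau)}\Pi_t^\e(\tau)$, so that
$$\mathbb{E}^{u_0}[f(u_t)]=\sum_{\tau\in\cT_{\alpha,\beta}^\e}\frac{\Upsilon_{u_0}^{\alpha,\beta,f}(\tau)}{\sigma_\e(\tau)}\,\Pi_t^\e(\tau)=\mathbb{E}^{\smallF}[f(u_t)].$$
This reduces the whole statement to proving $\mathbb{E}^{\smallF}[f(u_t)]=\mathbb{E}^{\smallMSR}[f(u_t)]$.

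Next I would reorganize $\mathbb{E}^{\smallF}$ into the multi-index form by grouping the tree sum along the fibres of the counting map $\Psi^\e$, writing
$$\mathbb{E}^{\smallF}[f(u_t)]=\sum_{\gamma\in\mathcal{M}_{\smallF}}\ \sum_{\tau:\,\Psi^\e(\tau)=\gamma}\frac{\Upsilon_{u_0}^{\alpha,\beta,f}(\tau)}{\sigma_\e(\tau)}\,\Pi_t^\e(\tau).$$
Because $\Upsilon_{u_0}^{\alpha,\beta,f}$ is assembled purely from the derivatives of $\alpha,\beta,f$ prescribed by the fertilities and colours of the vertices—exactly the data encoded by the coloured multi-index $z^\gamma$—it is constant on each fibre and equals $\Upsilon_{u_0}^{\alpha,\beta,f}(z^\gamma)$. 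It then remains to establish, for every $\gamma\in\mathcal{M}_{\smallF}$, the fibre identity
$$\sum_{\tau:\,\Psi^\e(\tau)=\gamma}\frac{\Pi_t^\e(\tau)}{\sigma_\e(\tau)}=\frac{\Pi_t(z^\gamma)}{\sigma_{\smallF}(z^\gamma)},$$
which is precisely the realization identity $\Pi_t(\Phi^\e(\tau))=\Pi_t^\e(\tau)$ packaged with the symmetry-factor weights defining $\Phi^\e$. Substituting this into the grouped sum reproduces \eqref{Eq: MSR Butcher} and yields $\mathbb{E}^{\smallF}=\mathbb{E}^{\smallMSR}$, closing the chain $\mathbb{E}^{\smallMSR}=\mathbb{E}^{\smallF}=\mathbb{E}^{u_0}$.

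The main obstacle is the exact matching of symmetry factors in this fibre identity: one must check that the weights built into $\Phi^\e$ reconcile the single multi-index factor $\sigma_{\smallF}(z^\gamma)^{-1}$ with the whole collection $\{\sigma_\e(\tau)^{-1}\}$ over the fibre $\Psi^{\e,-1}(\gamma)$. This is where the combinatorics of the coloured structure must be handled with care—in particular the bookkeeping of the auxiliary root component $\gamma_\bullet$ and the ordering convention on $\N^2_\le$ that prevents double-counting of the $\beta$-pairs—since any miscount of multiplicities there would break the coefficient-wise equality. Once this verification is in place, the equality of the two formal series follows at every order in $t$, and the proposition is proved.
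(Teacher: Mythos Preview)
Your proposal is correct and follows essentially the same route as the paper: rewrite the exotic B-series via $\Pi_t^\e$ (Corollary~\ref{Cor: key corollary}), group the tree sum along fibres of $\Psi^\e$, use that $\Upsilon_{u_0}^{\alpha,\beta,f}$ depends only on fertility data, and then invoke the fibre identity, which is exactly Theorem~\ref{Thm: identity expectation multi} (itself resting on the Orbit--Stabilizer count of Lemma~\ref{Lem: number of ways}). The ``main obstacle'' you flag—the matching of $\sigma_{\smallF}(z^\gamma)$ against the collection $\{\sigma_\e(\tau)\}$—is precisely what that lemma resolves, so your outline is complete once you cite it.
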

In this work we restrict to scalar-value diffusions since the advantages of using multi-indices are manifest only in the one-dimensional setting. However, the results obtained here should be seen as a starting point for better understanding how the use of Feynman diagrams in the vector-valued case can lead to analogous conclusions. 


The result stated in Proposition \ref{Prop: main proposition} may seem counterintuitive: even if no Gaussian measure that abides by \eqref{Eq: properties random fields} exists, a procedure intimately linked to contractions yields the correct result. Our analysis unveils that this is a fortuitous coincidence. Indeed, the contractions via $\Gamma_\theta$ of the exponential of the interacting action with $f$ coincide with the grafting operations that characterize the expansion of the Feller semigroup. A highly non-trivial step in this identification is that the integral of Heaviside functions created by the contraction map boils down to the right power of the time parameter, weighted by a suitable combinatorial factor, see Proposition \ref{Prop: explicit form Pi}.
This correspondence tells us that the formal derivation of \eqref{Eq: path integral expectation} consisting of a discretization of the SDE and a series of formal manipulations, see \emph{e.g.} \cite{peliti1985field}, can be circumvented by resorting to a B-series representation of the It\^o-Taylor expansion.

\subsection*{Organization of the paper}

In Section \ref{Sec: Exotic B-series} we adopt provide a truncated expansion of the expectation values of smooth functions of an It\^o diffusion in terms of exotic coloured trees. After addressing combinatorial properties of exotic trees and, in particular, a generalization of the Connes-Moscovici weights, we represent them as exotic B-series in which the combinatorial part has an explicit form.
Eventually we find a link between the realization map on exotic coloured trees and the combinatorial information carried by the generalized Connes-Moscovici weights, that passes through a novel characterization of the tree factorial.

In the first part of Section \ref{Sec: MSR expectations via Feynman multi-indices} we introduce a perturbative, algebraic version of the MSR path integral. After motivating the use of Feynman multi-indices to describe the pre-Feynman diagrams that label MSR expectations, we introduce suitable tools to account for the analytic and combinatorial information that they codify. Then we prove one of the main results of the paper, namely the decomposition of the MSR formula for expectations in terms of multi-indices.

Section \ref{Sec: Multi-indices exotic B-series} represents the last step of our bipartite program. After characterizing the link between exotic trees and multi-indices, we draw a connection between $\Pi(z^\gamma)$ and $\Pi^\e(\Phi^\e(z^\gamma))$, a crucial result that leads to the proof of Proposition \ref{Prop: main proposition}.

In Appendix \ref{Sec: reduction}, thanks to the explicit form of the realization map $\Pi^\e$, we unveil a non-trivial combinatorial relation that involves a reduction operation on exotic trees. The latter also allows to resort to a suitable form of Chen's relation in terms of the Butcher-Connes-Kreimer coproduct to make a comparison with the definition of factorial for a generic combinatorial Hopf algebra, proposed in \cite{curry2020planarly}.
Appendix \ref{Sec: applications} is devoted to the explicit computation of the contributions to the convergent B-series representation of moments of two well-known diffusions, the Orstein-Uhlenbeck process and the geometric Brownian motion.
Lastly, in Appendix \ref{Sec: deformation maps} we briefly recall the notions at the heart of the algebraic formulation of the perturbative MSR expectations in terms of distribution-valued functionals.

\subsection{Notation}\label{Sec: notation}
Throughout the paper we denote by $C^\infty(\R)$ the space of infinitely differentiable real function and by $\mathcal{D}(\R)$ the space of test functions, \emph{i.e.}, of functions lying in $ C^\infty(\R)$ with compact support. Accordingly, $\mathcal{D}'(\R)$ denotes the space of distributions.

For any set $A$ we denote by $\langle A\rangle$ its span, \emph{i.e.}, the vector space of finite linear combinations of elements lying in $A$. Given $s\leq T\in\R_+$, we denote by $[s,T]_\leq^n$ the simplex
\begin{equation*}
	[s,T]_\leq^n:=\{(t_1,\ldots, t_n)\in[s,T]^{\times n}\,:\;s\leq t_1\leq\ldots, \leq t_n\leq T\}\,.
\end{equation*}

\subsection*{Acknowledgements}
A.B. is supported by an FSMP postdoctoral fellowship. The author gratefully acknowledges Nicolò Drago, Titus Lupu, Camille Tardif and Lorenzo Agabiti for useful discussions on an early version of the manuscript and Dominique Manchon for suggesting a comparison with the definition of factorial of a combinatorial Hopf algebra.

\section{Exotic B-series expansion of expectations}\label{Sec: Exotic B-series}

The problem of representing the expectation value of the composition of a diffusion process with a test function as a power series is well known in the numerical analysis literature. The seminal works of Burrage and Burrage \cite{burrage1996high, burrage2000order} and Komori, Mitsui, Sugiura \cite{komori1997rooted} on the derivation of order conditions for stochastic Runge-Kutta integrators paved the way for the adoption of suitable generalizations of tree-labelled series to the study of stochastic numerical schemes. Recently, the importance of \emph{exotic rooted trees} for the study of weak convergence for stochastic Runge-Kutta methods \cite{rossler2006rooted} and for sampling the invariant measure of ergodic problems \cite{laurent2020exotic, laurent2022order} became apparent. Taking our moves from their approach,  we derive an exotic B-series representation of expectations of a diffusion process with an explicit characterization of the combinatorial factors involved. 

\subsection{It\^o-Taylor expansion and expectations}\label{Sec: ito-taylor}
In this section we review the ideas at the heart of a generalization of Taylor expansion suitable for representing the composition of a diffusion process solving \eqref{Eq: SDE} with a smooth function $f\in C^\infty(\R)$. This notion, first proposed by Wagner and Platen in \cite{wagner1982taylor}, has proven to be a fundamental tool for a systematic expansion of numerical integrators for stochastic differential equations. We refer the reader to \cite[\S 5]{platen2013} for additional details, while we only provide a brief compendium of the main ides behind the so called It\^o-Taylor expansion. 
We introduce the differential operators 
\begin{equation}\label{Eq: operators}
	L:=\alpha\frac{\partial}{\partial x}+\frac{1}{2}\beta^2\frac{\partial^2}{\partial x^2}\,,\qquad J:=\beta\frac{\partial}{\partial x}\,,
\end{equation}
where $\alpha,\beta\in C^\infty(\R)$ are the drift and diffusion coefficients in the SDE \eqref{Eq: SDE}, considered in the It\^o sense. A direct application of It\^o formula yields the integral equation for $f(u_t)$:
\begin{align}\label{Eq: Ito lemma}
	f(u_t)&=f(u_0)+\int_0^{t}Lf(u_s)\,ds+\int_0^{t}Jf(u_s)\,dW_{s}\,.
\end{align}
Nothing prevents us from substituting $f(u_s)$ in the first integral of  \eqref{Eq: Ito lemma} with an analogous expression, obtaining 
\begin{align}
	f(u_t)&=f(u_0)+\int_0^{t}Lf(u_0)\,ds+\int_0^{t}\int_0^{s}L^2f(u_r)\,drds+\int_0^{t}\int_0^{s}JLf(u_r)\,dW_{r}ds+\int_0^{t}Jf(u_{s})\,dW_{s}\nonumber\\
	&=f(u_0)+\Bigl(\alpha(u_0)f^{(1)}(u_0)+\frac{1}{2}\beta^2(u_0)f^{(2)}(u_0)\Bigr)\int_0^t\,ds+R^f_t\,,\label{Eq: formal expansion E}
\end{align}
for the remainder
\begin{equation*}
	R^f_t=\int_0^{t}\int_0^{s}L^2f(u_r)\,drds+\int_0^{t}\int_0^{s}JLf(u_r)\,dW_{r}ds+\int_0^{t}Jf(u_{s})\,dW_{s}\,.
\end{equation*}
This could be considered the simplest It\^o-Taylor expansion of $f(u_t)$. An analogous substitution could have been performed inside the stochastic integral, yielding an additional term $\beta(u_0)f^{(1)}(u_0)\int_0^t\,dW_s$ and a richer remainder. This procedure can be iterated by plugging \eqref{Eq: Ito lemma} in place of each instance of $f$. Considering operators $A_1,\ldots, A_n\in\{I,J\}$, a generic contribution to  the $n$-th iteration would have the form
\begin{equation}\label{Eq: contributions ito-taylor}
	\int_{[0,t]^n_{\leq}} A_1\ldots A_n f(u_0)\prod_{i=1}^n(ds_i\un{(A_i=L)}+dW_{s_i}\un{(A_i=J)})\,,
\end{equation}
where $[0,t]^n_{\leq}:=\{(t_1,\ldots, t_n)\in[0,t]^n\,:\,0\leq t_1\leq\ldots\leq t_n\leq t\}$. These mixed iterated integrals over a simplex are referred to as multiple It\^o integrals in \cite[\S 5]{platen2013}, where the information on the nature of each integration, as well as of its component in the vector-valued setting, are stored within words called hierarchical sets. Hence, we end up with a series of multiple It\^o integrals, whose weak convergence is subordinated to the Lipschitz continuity of the coefficients  and to the polynomial growth at infinity of both $\alpha$ and $\beta$, see \cite[\S 5.11]{platen2013}, at least for $f(u)=u$. 

Since the aim of the paper is to derive an expansion of the Feller semigroup defining  $\mathbb{E}^{u_0}[f(u_t)]$, we shall focus on an It\^o-Taylor expansion of the expectation of $f(u)$ with respect to the Wiener measure. Following \cite{rossler2004stochastic} we observe that, thanks to the martingale nature of It\^o integrals, the expectation value of any iterated integral containing at least one stochastic component vanishes, leaving us with the sole integrals in time. The reader can convince him/herself that the ensuing expression for the truncated expectation at order $n\in\N$ reads
\begin{align}
	\mathbb{E}^{u_0}[f(u_t)]&=\sum_{k=0}^n\frac{t^k}{k!}L^kf(u_0)+\int_0^t\ldots \int_0^{s_n}\mathbb{E}L^{n+1}f(u_{s_{n+1}})\,ds_{n+1}\ldots ds_1=\sum_{k=0}^n\frac{t^k}{k!}L^kf(u_0)+R^n_t\,.\label{Eq: expansion Ef}
\end{align}
Observe that, in the deterministic setting $\beta=0$, \eqref{Eq: expansion Ef} reduces to the deterministic Taylor expansion of $f(u_t)$.

\begin{remark}
	By definition, the semigroup generated by $L$ yields the solution to the Cauchy problem
	\begin{equation}\label{Eq: Talay}
		\begin{cases}
			\frac{\partial}{\partial t}v=Lv\\
			v(0)=f(u_0)
		\end{cases}
	\end{equation}
	This point of view was adopted in \cite{talay1990expansion} for studying the global error of numerical integrators for stochastic differential equations.  Equation \eqref{Eq: expansion Ef} can indeed be seen as an iterative integration of \eqref{Eq: Talay} obtained by a procedure analogous to the one outlined in \eqref{Eq: formal expansion E}.
\end{remark}

\subsection{Exotic coloured trees}\label{Sec: Graphical representation}

The directed trees introduced for the first time in \cite{burrage1996high, komori1997rooted} to study the Taylor expansion of numerical integrators for SDEs
carry a decoration of the vertices that specifies whether they represent a \emph{deterministic} contribution coming from the drift or a \emph{stochastic} contribution from the diffusion. This distinction is of paramount importance to store within these combinatorial object the information on the different kinds of measures that characterize the multiple stochastic integrals. As a preliminary step we define the space of trees needed to represent the It\^o-Taylor expansion of the solution of \eqref{Eq: SDE}. Notice that, when choosing $f(u)=u$ in \eqref{Eq: Ito lemma}, a generic contribution to the stochastic expansion of the solution has the form \eqref{Eq: contributions ito-taylor} for $A_i\in\{L',J\}$ with $L':=\alpha\frac{\partial}{\partial x}$. Hence, resorting to the isomorphism between rooted trees and vector fields spanned by the coefficients of the equation, at the heart of B-series, it is rather natural to consider decorated versions of rooted trees as advocated at the beginning of this section. Intuitively, such a decoration should codify the nature of each $A_i$.
\begin{definition}\label{Def: graphical notation perturbative}
	We denote by $\cT_{\alpha,\beta}$ the space of non-planar, directed \emph{coloured trees} $(\tau,\mathfrak{t})$, where
	\begin{itemize}
		\item $\tau=(\mathcal{V}(\tau),\mathcal{E}(\tau))$ is completely characterized by the set of vertices $\mathcal{V}(\tau)$ and the set of edges $\mathcal{E}(\tau)\subset\mathcal{V}(\tau)\times \mathcal{V}(\tau)$. There is a distinguished vertex $\rho_\tau$, the root, having no outgoing edges. 
		\item  $\mathfrak{t}:\mathcal{V}\to\{\alpha,\beta\}$ associates a colour to each vertex. 
	\end{itemize}
	We introduce the space of coloured forests $\mathcal{F}_{\alpha,\beta}$ as the symmetric tensor algebra with respect to the concatenation product\footnote{The concatenation product of trees coincides with their juxtaposition and will be denoted by $\cdot:\mathcal{F}_{\alpha,\beta}\times \mathcal{F}_{\alpha,\beta}\to\mathcal{F}_{\alpha,\beta}$. We stress that $(\mathcal{F}_{\alpha,\beta},\cdot)$ forms a unital, commutative algebra, the identity being the empty forest $\emptyset$}. The colour decoration on $\mathcal{F}_{\alpha,\beta}$ is inherited from the components, \emph{i.e.}, it coincides with the decoration of the disjoint rooted trees forming a forest. In addition, $\mathcal{F}_{\alpha,\beta}^n\subset \mathcal{F}_{\alpha,\beta}$ is the subspace of coloured forests consisting of the concatenation product of $n$ rooted trees.
\end{definition}
For any $\tau\in\mathcal{T}_{\alpha,\beta}$ we introduce the map $\mathfrak{n}:\mathcal{V}(\tau)\to \N$ that associates to each vertex of $\tau$ the number of incoming edges.
For later convenience we denote by $\mathcal{V}_\alpha(\tau)$, $\mathcal{V}_\beta(\tau)$ the sets of vertices of type $\alpha$ and $\beta$, respectively. In addition $|\tau|\in\N$ denotes the number of vertices of $\tau\in\cT_{\alpha,\beta}$. All these maps extend to  $\cF_{\alpha,\beta}$ in an additive way with respect to the forest product. For the sake of a lighter notation, we will leave the colour decoration implicit by simply writing $\tau$ instead of $(\tau,\mathfrak{t})$ if there is no risk of confusion.
\begin{example}\label{Ex: coloured tree}
	Here is an example of coloured tree lying in $\cT_{\alpha,\beta}$ as per Definition \ref{Def: graphical notation perturbative}:
	\begin{equation*}
	\tau=	\begin{tikzpicture}[scale=0.2,baseline=0.1cm]
			\node at (0,0) [dotred] (center) {\small$v_1$};
			\node at (0,4)  [dotred] (centerc) {\mbox{\small $v_2$}};
			\node at (-3,3)  [dotblue] (centerl) {\mbox{\small $v_3$}};
			\node at (3,3)  [dotblue] (centerr) {\mbox{\small $v_4$}};
			\draw[kernel1] (centerc) to
			node [sloped,below] {\small }     (center);
			\draw[kernel1] (centerr) to
			node [sloped,below] {\small }     (center);
			\draw[kernel1] (centerl) to
			node [sloped,below] {\small }     (center);
		\end{tikzpicture}
	\end{equation*}
Henceforth any vertex $v\in\mathcal{V}(\tau)$ such that $\mathfrak{t}(v)=\alpha$ will be represented as a red dot, while if $\mathfrak{t}(v)=\beta$ we will depict it as a blue dot. The image of $\mathfrak{n}$ is $\mathfrak{n}(v_1)=3$, while $\mathfrak{n}(v_i)=0$ for $i\in\{2,3,4\}$.
\end{example}

\begin{remark}
	To make a comparison with the definitions of \cite{laurent2020exotic, bronasco2025exotic} we can interpret $\mathfrak{t}:\mathcal{V}\to \{\alpha,\beta\}$ as a \emph{grafting decoration}. Indeed we can think of vertices of type $\beta$ as being decorated with a symbol $\times$ to denote the presence of the noise, with the key difference that this time the grafted decorations are not restricted to leaves, as it is the case for processes abiding by Langevin dynamics considered in these works.
\end{remark}
At the heart of the theory of B-series there is the correspondence between rooted trees and vector fields involving the coefficients of the equation \cite{butcher1963coefficients}. This relation will clarify the use of coloured trees to label the coefficients of the It\^o-Taylor expansion of the solution as per Section \ref{Sec: ito-taylor}. The canonical iterative definition, crucial in the vector-valued setting, greatly simplifies in the scalar case. Hence we propose the following.
\begin{definition}\label{Def: elementary butcher}
	For any smooth functions $\alpha,\beta\in C^\infty(\R)$ and $u_0\in\R$, we introduce the map $\Upsilon_{u_0}^{\alpha,\beta}:\cT_{\alpha,\beta}\to \R$ defined as
	\begin{equation}\label{Eq: elementary differential non-exotic}
		\Upsilon_{u_0}^{\alpha,\beta}(\tau)=\prod_{v\in\mathcal{V}(\tau)}\mathfrak{t}(v)^{(\mathfrak{n}(v))}(u_0)\,,\qquad \tau\in\cT_{\alpha,\beta}\,.
	\end{equation} 
	It extends  to $\mathcal{F}_{\alpha,\beta}$ as an homomorphism with respect to the forest product. 
\end{definition}
The second key ingredient is a map that extracts the expression of the multiple It\^o integral from the combinatorial trees labelling the expansion. After \cite{gubinelli2010ramification} it became clear how rooted trees are well suited to inductively define iterated integral that do not abide by integration by parts (geometric rough paths). Such an elegant formulation of algebraic integration, crucial in the vector-valued setting, partially hides the link with the global structure of the tree, which will play a prominent role when associating analytic expressions to the trees labelling the series expansion of expectations. Namely, we anticipate that an additional, highly non-local decoration of the $\beta$-vertices prevents us from adopting the formalism typical of branched rough  paths.
Hence we follow a different strategy based on the use of the Heaviside theta function. According to the It\^o prescription, we recall that for us $\Theta(0)=0$.
\begin{definition}\label{Def: realization map butcher}
	The \emph{realization map} $\Pi:\cT_{\alpha,\beta}\times [0,T]^2_{\leq}\to\R$ is such that $\Pi_{st}(\bulletalpha)=\Pi_{st}(\bulletbeta)=1$, while it acts on $\tau\in\mathcal{T}_{\alpha,\beta}$ such that $|\tau|>1$ as
	\begin{align}
		\Pi_{st}(\tau):=\int_{[s,T]^{|\tau|-1}}&\prod_{\substack{e\in\mathcal{E}(\tau) \\ e_+=\rho(\tau)}}\Theta(t-t_{e_-})\left(\un{(\mathfrak{t}(e_-)=\beta)}\,dB(t_{e_-})+\un{(\mathfrak{t}(e_-)=\alpha)}dt_{e_-}\right)\nonumber\\
		&\prod_{\substack{e\in\mathcal{E}(\tau) \\ e_+\neq\rho(\tau)}}\Theta(t_{e_+}-t_{e_-})\left(\un{(\mathfrak{t}(e_-)=\beta)}\,dB(t_{e_-})+\un{(\mathfrak{t}(e_-)=\alpha)}dt_{e_-}\right)\,,\label{Eq: realization map 1}
	\end{align}
	where $e=(e_+,e_-)$ and the orientation is pointing towards the root. It extends to forests of coloured trees as a map $\Pi:\bigoplus_{n=1}^\infty (\mathcal{F}_{\alpha,\beta}^n\times[s,T]^n)\to \R$ whose action on $w=\tau_1\cdot\ldots\cdot\tau_n\in\mathcal{F}_{\alpha,\beta}^n $ reads
	\begin{equation*}
		\Pi_{s,t_1\ldots t_n}(w):=\Pi_{st_1}(\tau_1)\ldots\Pi_{st_n}(\tau_n)\,.
	\end{equation*}
\end{definition}
We can test the definition of realization map and elementary differential on the tree of Example \ref{Ex: coloured tree}. We have
	\begin{align*}
		&\Upsilon_{u_0}^{\alpha,\beta}(\tau)=(\alpha^{(3)}\alpha\beta^2)(u_0)\,,\\
		&\Pi_{st}(\tau)=\int_{[s,T]^4}\Theta(t-t_1)\Theta(t_1-t_2)\Theta(t_1-t_3)\Theta(t_1-t_4)\,dt_1dt_2dB(t_3)dB(t_4)\,.
	\end{align*}
where the stochastic integrals are in the sense of It\^o calculus. In the following we will mainly work with expansions centered at $s=0$. For this reason we will often adopt the shorthand notation $\Pi_t:=\Pi_{0t}$.

Another key ingredient in the B-series expansion of the solution to \eqref{Eq: SDE} is a number that accounts for the symmetries of the tree. Since the decoration prevents us from adopting an explicit, iterative definition as in \emph{e.g.} \cite{chartier2010algebraic, Hairer2006}, we adapt the notion of automorphism of a diagram given in \cite{bruned2025renormalising}.
\begin{definition}\label{Def: automorphism tree}
	An automorphism of the coloured rooted tree $(\tau,\mathfrak{t})\in\cT_{\alpha,\beta}$ is a map that permutes its edges and vertices to produce a tree sharing the same structure as $\tau$ and that preserves the endpoints of each edge and its orientation, as well as the decorations. They are the composition of two group actions  $g_\mathcal{V}:\mathcal{V}(\tau)\rightarrow \mathcal{V}(\tau)$ and $g_\mathcal{E}:\mathcal{E}(\tau)\rightarrow \mathcal{E}(\tau)$ such that
	\begin{equation}
		\tau':=g_\mathcal{V}\circ g_\mathcal{E}\,(\tau)\cong \tau\,,
	\end{equation}
	and such that, for any $e\in\mathcal{E}(\tau), v\in\mathcal{V}(\tau)$,
	\begin{align*}
		&\{g_{\mathcal{V}}(v_{e_+}),g_{\mathcal{V}}(v_{e_-})\}=\{v_{g_{\mathcal{E}}(e)+},v_{g_{\mathcal{E}}(e)-}\}\,,\\
		&\mathfrak{t}(g_{\mathcal{V}(v)})=\mathfrak{t}(v)\,.
	\end{align*} 
	We denote by $\text{Aut}(\tau,\mathfrak{t})$ the space of automorphisms of $(\tau,\mathfrak{t})\in\mathcal{T}_{\alpha,\beta}$ and by $	\sigma(\tau)$ its symmetry factor, defined as the size of its automorphism group 
	\begin{equation*}
		\sigma(\tau):=|Aut(\tau,\mathfrak{t})|\,.
	\end{equation*}
		For $w=(\tau_1)^{\cdot n_1}\cdot\ldots\cdot(\tau_k)^{\cdot n_k}\in\cF_{\alpha,\beta}$, $n_1,\ldots, n_k\in\N$, we define
	\begin{equation*}
		\sigma(w):=\prod_{i=1}^k\sigma(\tau_i)^{n_i}\,.
	\end{equation*}
\end{definition}

B-series proved extremely useful in representing the solution to a rough equation driven by non-differentiable paths
as a series over branched rough paths \cite{gubinelli2010ramification}. Namely, regarding any realization of the Wiener process $\{W_t\}_{t\in[0,T]}$ as a path of H\"older regularity $\frac{1}{2}-\varepsilon$ for all $\varepsilon>0$ and identifying the integrals characterizing $\Pi_t$ in \eqref{Eq: realization map 1} with the components of the branched rough path centered at $0$, the solution takes the form
\begin{equation}\label{Eq: rough path expansion}
	u_t=\sum_{\tau\in\cT_{\alpha,\beta}}\frac{\Upsilon_{u_0}^{\alpha,\beta}(\tau)}{\sigma(\tau)}\Pi_t(\tau)\,.
\end{equation}
Here, instead of a tree-labelled expansion of the solution, we are interested in an expression analogous to \eqref{Eq: rough path expansion} for the expectation of its composition with smooth functions. Naively, one could think of lifting the tree-based expansion of $u_t$ to an analogous expansion for $f(u_t)$ and, subsequently, to study the effect of taking the expectation of the iterated integrals characterizing the realization map. This procedure is correct in the discrete setting of numerical schemes, see \cite{laurent2020exotic}, where $\beta$-vertices are placeholders for Gaussian white noises that, after taking expectations, must be paired in all possible ways through \emph{lianas}. Yet, it a priori fails in the continuum setting where the branched nature of the stochastic integrals prevents us from resorting to a Wick-like theorem. Nevertheless, this erroneous viewpoint offers a heuristic motivation for the classes of trees that we are going to introduce. A central result of Section \ref{Sec: exotic s-series} is the proof that intuition leads to a correct result. 

Here we prefer to adopt the notation of \cite{bronasco2025exotic, rossler2004stochastic}, where lianas are replaced by a suitable decoration of the vertices. Indeed one can identify vertices connected by a liana via a common \emph{exotic decoration}. For a coloured forest $w\in\cF_{\alpha,\beta}$ we consider 
\begin{equation*}
	\mathfrak{d}:\mathcal{V}_\beta(w)\to \N\,,
\end{equation*}
such that $|\mathfrak{d}^{-1}(n)|=2$ for all $n\in\N$ and we denote by $\cF_{\alpha,\beta}^{p}$ the space of \emph{paired forests} $(w,\mathfrak{t},\mathfrak{d})$ endowed with a pairing decoration of this form and such such that $|\mathcal{V}_\beta|=2k$, $k\in\N$.
\begin{definition}\label{Def: Coloured exotic trees}
	We define the equivalence relation $\sim$ on $\cF_{\alpha,\beta}^{p}$ as follows: for $(w,\mathfrak{t}_1,\mathfrak{d}_1),(w, \mathfrak{t}_2,\mathfrak{d}_2)\in\cF_{\alpha,\beta}^p$, we say that $w_1\sim w_2$ if $\mathfrak{t}_1=\mathfrak{t}_2$ and if $\mathfrak{d}_1(v_1)=\mathfrak{d}_1(v_2)$ implies $\mathfrak{d}_2(v_1)=\mathfrak{d}_2(v_2)$ for any $v_1,v_2\in\mathcal{V}_\beta(w)$.
	Then we define the space of \emph{exotic coloured forests}
	\begin{equation*}
		\cF_{\alpha,\beta}^\e:=\frac{\cF_{\alpha,\beta}^{p}}{\sim}\,.
	\end{equation*} 
\end{definition}
The rather involved definition of coloured exotic forests is simply a rigorous way of formalizing that vertices of type $\beta$ must be paired, the pairing being represented by a common natural number. The quotient rules out possible re-labellings that chance the number used to represent the pairings. As an example, the decorated trees
\begin{equation*}
	(\tau,\mathfrak{t},\mathfrak{d})=
	\begin{tikzpicture}[scale=0.2,baseline=0.1cm]
		\node at (0,0) [dotred] (root) {};
		\node at (0,3) [dotred] (center) {};
		\node at (0,6)  [dotred] (centerc) {};
		\node at (-2,5)  [dotblue] (centerl) {\mbox{\small $1$}};
		\node at (2,5)  [dotblue] (centerr) {\mbox{\small $1$}};
		\draw[kernel1] (center) to
		node [sloped,below] {\small }     (root);
		\draw[kernel1] (centerc) to
		node [sloped,below] {\small }     (center);
		\draw[kernel1] (centerr) to
		node [sloped,below] {\small }     (center);
		\draw[kernel1] (centerl) to
		node [sloped,below] {\small }     (center);
	\end{tikzpicture}
	\qquad (\tau,\mathfrak{t},\mathfrak{d}')=
	\begin{tikzpicture}[scale=0.2,baseline=0.1cm]
		\node at (0,0) [dotred] (root) {};
		\node at (0,3) [dotred] (center) {};
		\node at (0,6)  [dotred] (centerc) {};
		\node at (-2,5)  [dotblue] (centerl) {\mbox{\small $2$}};
		\node at (2,5)  [dotblue] (centerr) {\mbox{\small $2$}};
		\draw[kernel1] (center) to
		node [sloped,below] {\small }     (root);
		\draw[kernel1] (centerc) to
		node [sloped,below] {\small }     (center);
		\draw[kernel1] (centerr) to
		node [sloped,below] {\small }     (center);
		\draw[kernel1] (centerl) to
		node [sloped,below] {\small }     (center);
	\end{tikzpicture}
\end{equation*}
identify the same coloured exotic tree $(\tau,\mathfrak{t},\mathfrak{e})$ and, in the language of \cite{rossler2006rooted}, they are equivalent. 

While the algebra $(\cF_{\alpha,\beta},\cdot)$ is generated by monomials of trees lying in $\cT_{\alpha,\beta}$ with respect to the forest product, this is not true for $(\cF_{\alpha,\beta}^\e,\cdot)$ since exotic forests are not necessarily the forest product of exotic trees. To wit, it is true that, for any $(\tau_1,\mathfrak{e}_1),\ldots, (\tau_N,\mathfrak{e}_N)\in\mathcal{F}^{\mathfrak{e}}_{\alpha,\beta}$, the forest product $\tau_1\cdot\ldots\cdot\tau_N$ identifies an exotic forest with exotic decoration $\mathfrak{e}:=\mathfrak{e}_1\sqcup\ldots\sqcup \mathfrak{e}_N$. Nevertheless, Definition \ref{Def: Coloured exotic trees} allows for exotic decorations that pair $\beta$-vertices of distinct rooted trees. This \emph{nonlocal feature} is rather cumbersome, especially for what concerns a detailed account of the combinatorial features of exotic coloured structures. For this reason we prefer to depart from the standard forest-based treatment of exotic S-series \cite{bronasco2024hopf, rossler2004stochastic} by resorting to a canonical isomorphism between forests and trees realized by the operator $B_+$ that maps a forest into a rooted tree by grafting all components of the former on a common, uncoloured root. 
\begin{definition}\label{Def: rxotic coloured trees}
	We define the space $\cT_{\alpha,\beta}^{\mathfrak{e}}$ of exotic coloured trees of the form $\tau=B_+(w)$ for $w\in\mathcal{F}_{\alpha,\beta}^{\mathfrak{e}}$ and we endow it with the exotic grading $|\cdot|_\e:\cT_{\alpha,\beta}^\e\to\N$ and edge-counting map $\mathfrak{l}:\cT_{\alpha,\beta}^\e\to\N$ as follows:
	\begin{align*}
		|\tau|_\e:=|\mathcal{V}_\alpha(\tau)|+\frac{1}{2}|\mathcal{V}_\beta(\tau)|+1\,,\hspace{1.5cm}\mathfrak{l}(\tau):=|\tau|_\e-1\,.
	\end{align*}
\end{definition} 
Henceforth, when clear from the context, we will leave the exotic decoration implied. 

Definition \ref{Def: elementary butcher} naturally extends to an elementary differential $\Upsilon_{u_0}^{\alpha,\beta,f}:\cT_{\alpha,\beta}^\e\to\R$ labelled by $f\in C^\infty(\R)$. Let $\tau=B_+(w)$ for $w\in\mathcal{F}_{\alpha,\beta}^n$. Then
	\begin{equation}\label{Eq: elementary differential trees}
		\Upsilon_{u_0}^{\alpha,\beta,f}(B_+(w)):=f^{(n)}(u_0)\Upsilon_{u_0}^{\alpha,\beta}(w)\,.
	\end{equation}
\begin{definition}
	An automorphism of an exotic coloured tree $(\tau,\mathfrak{t}, \mathfrak{e})\in\mathcal{T_{\alpha,\beta}^\mathfrak{e}}$ is an automorphism $g=g_\mathcal{V}\circ g_{\mathcal{E}}$ as per Definition \ref{Def: automorphism tree} which, in addition, preserves the exotic decoration. Namely, for any $v\in\mathcal{V}_\beta(\tau)$, 
	\begin{align*}
		\mathfrak{e}(g_{\mathcal{V}}(v))=\mathfrak{e}(v)\,.
	\end{align*} 
	We denote by $\text{Aut}(\tau,\mathfrak{t},\mathfrak{e})$ the automorphism group of $(\tau,\mathfrak{t},\mathfrak{e})\in\mathcal{T}^{\mathfrak{e}}_{\alpha,\beta}$ and we define its symmetry factor as $\sigma_{\mathfrak{e}}(\tau):=|Aut(\tau, \mathfrak{t},\mathfrak{e})|$.
\end{definition}

\begin{remark}\label{Rem: symmetry tree-feynman}
	To develop an intuition on how to compute the symmetry factor of an exotic tree, consider for instance
	\begin{equation*}
		(\tau,\mathfrak{t},\mathfrak{e})=
		\begin{tikzpicture}[scale=0.2,baseline=0.1cm]
			\node at (0,0) [dot] (root) {};
			\node at (0,3) [dotred] (center) {};
			\node at (-1,6)  [dotblue] (centerll) {\mbox{\small $i$}};
			\node at (1,6)  [dotblue] (centerrr) {\mbox{\small $k$}};
			\node at (-3,5)  [dotblue] (centerl) {\mbox{\small $i$}};
			\node at (3,5)  [dotblue] (centerr) {\mbox{\small $k$}};
			\draw[kernel1] (center) to
			node [sloped,below] {\small }     (root);
			\draw[kernel1] (centerr) to
			node [sloped,below] {\small }     (center);
			\draw[kernel1] (centerll) to
			node [sloped,below] {\small }     (center);
			\draw[kernel1] (centerrr) to
			node [sloped,below] {\small }     (center);
			\draw[kernel1] (centerl) to
			node [sloped,below] {\small }     (center);
		\end{tikzpicture}
	\end{equation*}
	where, according to Definition \ref{Def: Coloured exotic trees}, the exotic decoration takes any integer values $i,k\in\N$ such that $i\neq k$. We can distinguish two symmetries codified by transformations lying in $Aut(\tau,\mathfrak{t},\mathfrak{e})$:
	\begin{enumerate}
		\item \label{Item 1} Any permutation of the $\beta$-vertices belonging to a pair, \emph{i.e.} $v_1,v_2\in\mathcal{V}_\beta(\tau)$ such that $\mathfrak{e}(v_1)=\mathfrak{e}(v_2)$, 
		\item The permutation of the two pairs of vertices identified different exotic decorations, treated as single effective vertices. \label{Item 2} 
	\end{enumerate}
	Accordingly, given the definition of symmetry factor as the cardinality of the automorphism group, we conclude that
	\begin{equation*}
		\sigma_{\mathfrak{e}}(\tau)=(2!)^22!=8\,,
	\end{equation*}
	where the factor $(2!)^2$ comes from the symmetry described in Item \ref{Item 1}, while $2!$ accounts for the number of permutation of effective vertices mentioned in Item \ref{Item 2}. To make a comparison with non-exotic trees, the tree $(\tau,\mathfrak{t})$ obtained by forgetting $\e$ has $\sigma(\tau)=4!$. 
\end{remark}
We already stressed that, in the discrete realm of numerical analysis, taking expectations of the It\^o-Taylor expansion amounts to switching from coloured trees to exotic coloured trees by collapsing pairs of noises in all possible ways. We make the assumption that the same happens in the present setting, hence visualizing the effect of the exotic decoration on the realization map as the identification of integration variables attached to pairs $v_1,v_1\in\mathcal{V}_\beta$ such that $\e(\tau_1)=\e(\tau_2)$. At the core of Section \ref{Sec: Realization map and the Connes-Moscovici weight} there is the proof that this assumption fulfils our needs.  
\begin{definition}
	We define the \emph{exotic realization map}  $\Pi^{\mathfrak{e}}:\mathcal{T}_{\alpha,\beta}^{\mathfrak{e}}\times [0,T]_\leq^2\to\R$ by $\Pi^\e_{st}(\bullet)=1$ and
	\begin{align}
		\Pi_{st}^{\mathfrak{e}}&(\tau):=\un{(|\mathcal{V}_\beta(w)|=2k)}\int_{[s,T]^{|\tau|-1}}\prod_{\substack{e\in\mathcal{E}(\tau_i)\\ e_+=\rho_{\tau_i}}}\Theta(t-t_{e_-})\prod_{\substack{e\in\mathcal{E}(w)\\ e_+\neq\rho_\tau}}\Theta(t_{e_+}-t_{e_-})\prod_{\substack{(v,w)\in\mathcal{V}_\beta(\tau)\\\mathfrak{e}(v)=\mathfrak{e}(w)}}\delta(t_{v_1}-t_{v_2})\prod_{u\in\mathcal{V}(\tau)\setminus\{\rho(\tau)\}}dt_u\,,\label{Eq: realization map exotic trees correct}
	\end{align}
	for any $(\tau,\mathfrak{t},e)\in\cT_{\alpha,\beta}^\e$ such that $|\tau|_\e>1$ and $t\in[0,T]$.
\end{definition}
 The explicit expression of the realization map on $\cT_{\alpha,\beta}^e$ prompts the following observations
	\begin{itemize}
		\item the expression in \eqref{Eq: realization map exotic trees correct} may contain ill-defined products of distributions $\Theta(t_i-t_j)\delta(t_i-t_j)$. One should work at the level of mollified kernels and remove the regularization at the end. Yet, as we fixed the prescription $\Theta(0)=0$, in the limit all the products of this form vanish. A detailed discussion on this hurdle  is postponed to Remark \ref{Rem: problem products} 
		\item The oriented nature of the kernels represented by legs makes sure that $\Pi_{ st}(\tau)=0$ as soon as the tree contains a closed path, \emph{i.e.} an oriented, connected component of $\tau$ that starts and end at the same vertex.
	\end{itemize}
	These two situations occur when a pair of $\beta$-vertices with the same exotic decoration is shared by vertices that lie on a common path that connects a leaf with the root. As long as the realization map is concerned, we do not have to bother about this constraint, which is part of the definition of exotic trees in \emph{e.g.} \cite{rossler2004stochastic}.

\subsubsection{Grafting of decorated vertices}\label{Sec: grafting of decorated vertices}

Before addressing the problem of finding an exotic B-series expansion for $\mathbb{E}^{u_0}[f(u_t)]$ we need a specific operation on exotic coloured trees. Recalling the It\^o-Taylor expansion introduced in Section \ref{Sec: ito-taylor}, each contribution comes form the action of the differential operator $L^n$ on $f$, see \eqref{Eq: expansion Ef}. Guided by the $1$-to-$1$ correspondence between vector fields and rooted trees realized by the elementary differential $\Upsilon^{\alpha,\beta,f}_{u_0}$, we shall lift the action of $L$ at the level of exotic trees via a \emph{grafting operation}. 

\begin{definition}\label{Def: grafting}
	Let $\tau\in\cT_{\alpha,\beta}^\mathfrak{e}$ and $v,w\in\mathcal{V}(\tau)$. The grafting of an $\alpha$-vertex on $v$, denoted $\bulletalpha\curvearrowright_v\tau$, amounts to attaching a leaf $B_+(\bulletalpha)$ to the vertex $v$. The simultaneous grafting of two $\beta$-vertices on $v$ and $w$, denoted by $\doublebullet\curvearrowright_{v,w}\tau$, amounts to attaching a leaf $B_+(\bulletbeta)$ to the vertex $v$ and another one to $w$. Then we define the grafting operations
	\begin{equation}\label{Eq: grafting}
		\bulletalpha\curvearrowright\tau:=\sum_{v\in\mathcal{V}(\tau)}\bulletalpha\curvearrowright_v\tau\,,\qquad\doublebullet\curvearrowright\tau:=\sum_{v,w\in\mathcal{V}(\tau)}\doublebullet\curvearrowright_{v,w}\tau\,.
	\end{equation} 
	In both cases, the colour of the vertices on which leaves are grafted is preserved. 
\end{definition}

For later convenience, it is useful to investigate the interaction between the grafting of vertices and the action of the elementary differential. We adapt to the present setting a result analogous to \cite[Lem. 2.7, 2.8]{rossler2004stochastic}, see also \cite[Prop. 3.1]{bruned2025multi} for the same relation in the framework of multi-indices B-series.
\begin{theorem}\label{Thm: effect grafting}
	Let $\alpha,\beta,f \in C^\infty(\R)$, $u_0\in\R$ and consider the elementary differential $\Upsilon_{u_0}^{\alpha,\beta,f}$ as per \eqref{Eq: elementary differential trees}. Denoting by $L$ the operator defined in \eqref{Eq: operators}, it holds
	\begin{equation}\label{Eq: effect grafting}
		L\Upsilon_{u_0}^{\alpha,\beta,f}(\tau)=\Upsilon_{u_0}^{\alpha,\beta,f}(\bulletalpha\curvearrowright\tau)+\frac{1}{2}\Upsilon_{u_0}^{\alpha,\beta,f}(\doublebullet\curvearrowright\tau)\,.
	\end{equation}
\end{theorem}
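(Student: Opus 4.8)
The plan is to reduce \eqref{Eq: effect grafting} to the Leibniz rule for $L$, by making explicit the effect of a single grafting on the elementary differential. First I would observe that $\Upsilon_{u_0}^{\alpha,\beta,f}$ is insensitive to the exotic decoration: by \eqref{Eq: elementary differential trees} together with Definition \ref{Def: elementary butcher}, it depends on $\tau$ only through the colour $\mathfrak{t}(v)$ and the fertility $\mathfrak{n}(v)$ of each vertex, never through $\mathfrak{e}$. Hence I may regard $\Upsilon$ as a function of a real variable $x$, to be evaluated at $u_0$, and forget the pairing. Writing $\tau=B_+(w)$ with $w\in\mathcal{F}_{\alpha,\beta}^n$, I introduce the uniform symbol $h_v:=f$ when $v=\rho_\tau$ and $h_v:=\mathfrak{t}(v)$ otherwise; since $\mathfrak{n}(\rho_\tau)=n$, this folds the special role of the uncoloured root into the product and gives
\[
	\Upsilon_x^{\alpha,\beta,f}(\tau)=\prod_{v\in\mathcal{V}(\tau)} h_v^{(\mathfrak{n}(v))}(x)\,.
\]

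Next I would record the effect of one grafting. Attaching an $\alpha$-leaf to a vertex $v$ leaves every fertility unchanged except $\mathfrak{n}(v)\mapsto\mathfrak{n}(v)+1$, and creates one childless vertex of colour $\alpha$ contributing a factor $\alpha(x)$; grafting on the root then correctly raises the order of $f$. Thus
\[
	\Upsilon_x^{\alpha,\beta,f}(\bulletalpha\curvearrowright_v\tau)=\alpha(x)\,h_v^{(\mathfrak{n}(v)+1)}(x)\prod_{u\in\mathcal{V}(\tau),\,u\neq v} h_u^{(\mathfrak{n}(u))}(x)\,,
\]
and summing over $v\in\mathcal{V}(\tau)$, comparing with $\partial_x\prod_v h_v^{(\mathfrak{n}(v))}=\sum_v h_v^{(\mathfrak{n}(v)+1)}\prod_{u\neq v}h_u^{(\mathfrak{n}(u))}$, yields the identity $\Upsilon_x^{\alpha,\beta,f}(\bulletalpha\curvearrowright\tau)=\alpha(x)\,\partial_x\Upsilon_x^{\alpha,\beta,f}(\tau)$, which matches the first summand of $L$.

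For the second summand I would treat $\doublebullet\curvearrowright\tau=\sum_{v,w}\doublebullet\curvearrowright_{v,w}\tau$ and split the ordered double sum into its diagonal ($v=w$) and off-diagonal ($v\neq w$) parts. The double grafting always produces a factor $\beta^2(x)$ from the two new $\beta$-leaves; on the diagonal it increments $\mathfrak{n}(v)$ by two, while off the diagonal it increments $\mathfrak{n}(v)$ and $\mathfrak{n}(w)$ by one each. Comparing with the second-order Leibniz rule
\[
	\partial_x^2\prod_v h_v^{(\mathfrak{n}(v))}=\sum_v h_v^{(\mathfrak{n}(v)+2)}\prod_{u\neq v}h_u^{(\mathfrak{n}(u))}+\sum_{v\neq w} h_v^{(\mathfrak{n}(v)+1)}h_w^{(\mathfrak{n}(w)+1)}\prod_{u\neq v,w}h_u^{(\mathfrak{n}(u))}\,,
\]
whose off-diagonal sum runs over ordered pairs, I obtain $\Upsilon_x^{\alpha,\beta,f}(\doublebullet\curvearrowright\tau)=\beta^2(x)\,\partial_x^2\Upsilon_x^{\alpha,\beta,f}(\tau)$. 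Adding the two contributions, recalling $L=\alpha\partial_x+\tfrac12\beta^2\partial_x^2$, and evaluating at $x=u_0$ gives \eqref{Eq: effect grafting}.

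The main obstacle is the bookkeeping of multiplicities in the second-order term: one must check that the ordered-pair convention $\sum_{v,w}$ in Definition \ref{Def: grafting} reproduces exactly the cross terms of $\partial_x^2$, each unordered pair being counted twice, while the diagonal $v=w$ reproduces the single-factor second derivative, so that the global factor $\tfrac12$ on both sides is consistent. This is also where one verifies that grafting both $\beta$-leaves on a single vertex is legitimate (the remark after Definition \ref{Def: grafting} only forbids grafting one $\beta$-leaf onto the other), which is precisely what supplies the diagonal term. Everything else is the routine Leibniz computation above.
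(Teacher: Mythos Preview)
Your proof is correct and follows the same approach as the paper: both reduce the identity to the Leibniz rule applied to the product form $\prod_v h_v^{(\mathfrak{n}(v))}$ of the elementary differential, identifying grafting at $v$ with differentiating the $v$-th factor. The paper carries out the first-order case explicitly and dismisses the $\beta$-part with ``the second part of the identity follows suit'', whereas you actually spell out the diagonal/off-diagonal split of the ordered sum $\sum_{v,w}$ and match it against the second-order Leibniz expansion, which is the only place where any genuine bookkeeping is needed.
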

\begin{proof}
	By definition
	\begin{align*}
		\Upsilon_{u_0}^{\alpha,\beta,f}(\bulletalpha\curvearrowright\tau)=\sum_{v\in\mathcal{V}(\tau)}\Upsilon_{u_0}^{\alpha,\beta,f}(\bulletalpha\curvearrowright_v\tau)=\alpha(u_0)\sum_{v\in\mathcal{V}(\tau)}\mathfrak{t}(v)^{\mathfrak{n}(v)+1}(u_0)\prod_{w\in\mathcal{V}(\tau)\setminus {v}}\mathfrak{t}(w)^{\mathfrak{n}(w)}(u_0)\,,
	\end{align*}
	while, by the Leibniz rule
	\begin{align*}
		\alpha\frac{\partial \Upsilon_{\cdot}^{\alpha,\beta,f}(\tau)}{\partial x}\Big\vert_{u_0}=\alpha\frac{\partial}{\partial x}\prod_{v\in\mathcal{V}(\tau)}\mathfrak{t}(w)^{\mathfrak{n}(v)}\Big\vert_{u_0}=\alpha(u_0)\sum_{v\in\mathcal{V}(\tau)}\mathfrak{t}(v)^{\mathfrak{n}(v)+1}(u_0)\prod_{w\in\mathcal{V}(\tau)\setminus {v}}\mathfrak{t}(w)^{\mathfrak{n}(w)}(u_0)\,.
	\end{align*}
	The two expressions coincide, proving the first part of  \eqref{Eq: effect grafting}. The second part of the identity follows suit.
\end{proof}

\subsection{Exotic B-series representation of the Feller semigroup}\label{Sec: exotic s-series}
As stated in the introduction, given an initial value problem of the form
\begin{equation}\label{Eq: generic ODE}
	dx_t=h(x_t)dt\,,\qquad x_0=x\,.
\end{equation}
for a sufficiently smooth function $h$, a numerical method
\begin{equation*}
	y_{n+1}=F(h,\delta)(x_n)\,
\end{equation*}
for the time-step $\delta\in\R$ admits a B-series representation if its Taylor expansion abides by the form
\begin{equation}\label{Eq: B-series generic}
	B(h,\delta,a)=\sum_{\tau\in\cT}\frac{a(\tau)}{\sigma(\tau)}\Upsilon^h_x(\tau)\,\delta^{|\tau|}\,.
\end{equation}
where $\cT$ is the space of non-planar rooted trees, \emph{i.e.}, a non-decorated version of $\cT_{\alpha,\beta}$, $\Upsilon_x^h$ is the elementary differential \eqref{Eq: elementary differential non-exotic} for $\alpha=h$ and $\beta=0$, while $\sigma(\tau)$ is the size of the automorphism group of $\tau$. Without entering into details, we just underline that the choice of a specific functional $a:\cT\to\R$ completely characterizes the numerical scheme of our interest. We also mention that under suitable growth assumptions on $h$ and on $a$, the latter as a function of the number of vertices $|\tau|$, the power series converges, see \cite{Hairer2006}. 
As discovered by Butcher in \cite{butcher1963coefficients}, the choice of $a$ that corresponds to the Taylor expansion of the exact solution to \eqref{Eq: generic ODE} is
\begin{equation}\label{Eq: exact character}
	a(\tau)=(\tau!)^{-1}\,,
\end{equation}
where the \emph{tree factorial} $\tau!$ is defined recursively: we shall set  $\bullet!=1$ and, for any $\tau=B_+(\tau_1,\ldots, \tau_n)\in\cT$, $\tau_1,\ldots, \tau_n\in\cT$, we have
\begin{equation}\label{Eq: tree factrorial classic}
	\tau!=|\tau|\,\tau_1!\ldots \tau_n!\,.
\end{equation} 
For a concise proof that this choice of $a$ yields the exact solution, we refer to \cite{chartier2010algebraic}. 
The summand of the B-series in \eqref{Eq: B-series generic} for the choice \eqref{Eq: exact character} can be rewritten as
\begin{equation*}
	\frac{|\tau|!}{\sigma(\tau)\tau!}\Upsilon_x^h(\tau)\frac{t^{|\tau|}}{|\tau|!}\,,
\end{equation*}
unveiling the presence of the \emph{Connes-Moscovici weights}
\begin{equation}\label{Eq: CM weight}
	CM(\tau):=\frac{|\tau|!}{\sigma(\tau)\tau!}\,,
\end{equation}
introduced in \cite{kreimer1999chen} as the number of ways of obtaining a given rooted tree from subsequent grafting of vertices, starting from the root. This operation is called \emph{natural growth} in the literature \cite{broadhurst1999renormalization, brouder2000runge}.

This form of the Butcher series for the exact solution should not surprise us since, as observed in Section \ref{Sec: ito-taylor}, the construction of the It\^o-Taylor expansion reduces to the classical Taylor expansion if $J=0$ and $L=h\frac{\partial}{\partial x}$. According to Theorem \ref{Thm: effect grafting}, each contribution at order $n$ derives from the grafting of $n$ vertices starting from the root. Hence, each contribution will be of the form $\Upsilon_x^h(\tau)\frac{t^{|\tau|}}{|\tau|!}$ multiplied by the Connes-Moscovici weight of $\tau\in\cT$. In what follows we will adopt this interpretation as an ansatz for the exotic B-series expansion of $\mathbb{E}^{u_0}[f(u_t)]$ and show that it coincides with the It\^o-Taylor expansion. As a first step we need to find a suitable expression for the Connes-Moscovici weights for exotic coloured trees. 
\begin{example}
	To appreciate the higher complexity in counting the number of ways of obtaining a given tree via a natural growth that involves the grafting of $\bulletalpha$ and the simultaneous grafting of $\frac{1}{2}\doublebullet$, consider the trees
	\begin{equation*}
			\tau=
		\begin{tikzpicture}[scale=0.2,baseline=0.1cm]
			\node at (0,0) [dot] (root) {};
			\node at (-2,3) [dotred] (centerl) {};
			\node at (+2,3) [dotred] (centerr) {};
			\node at (-2,6)  [dotred] (centerll) {};
			\draw[kernel1] (centerl) to
			node [sloped,below] {\small }     (root);
			\draw[kernel1] (centerr) to
			node [sloped,below] {\small }     (root);
			\draw[kernel1] (centerll) to
			node [sloped,below] {\small }     (centerl);
		\end{tikzpicture}\hspace{2cm}
	\tau'=
		\begin{tikzpicture}[scale=0.2,baseline=0.1cm]
			\node at (0,0) [dot] (root) {};
			\node at (-2,3) [dotred] (centerl) {};
			\node at (+2,3) [dotred] (centerr) {};
			\node at (-2,6)  [dotblue] (centerll) {\mbox{\small $1$}};
			\node at (2,6)  [dotblue] (centerrr) {\mbox{\small $1$}};
			\draw[kernel1] (centerl) to
			node [sloped,below] {\small }     (root);
			\draw[kernel1] (centerr) to
			node [sloped,below] {\small }     (root);
			\draw[kernel1] (centerrr) to
			node [sloped,below] {\small }     (centerr);
			\draw[kernel1] (centerll) to
			node [sloped,below] {\small }     (centerl);
		\end{tikzpicture}
	\end{equation*}
	We have $3$ different ways of building $\tau$ via natural growth:
	\begin{equation*}
		\bullet\quad\to\quad	\begin{tikzpicture}[scale=0.2,baseline=0.1cm]
			\node at (0,0) [dot] (root) {};
			\node at (0,3) [dotred] (center) {};
			\draw[kernel1] (center) to node [sloped,below] {\small }     (root);
		\end{tikzpicture}\quad \to\quad	
	\begin{tikzpicture}[scale=0.2,baseline=0.1cm]
		\node at (0,0) [dot] (root) {};
		\node at (0,3) [dotred] (center) {};
		\node at (0,6) [dotred] (center2) {};
		\draw[kernel1] (center) to node [sloped,below] {\small }     (root);
		\draw[kernel1] (center2) to node [sloped,below] {\small }     (center);
	\end{tikzpicture}
	\,+
	\begin{tikzpicture}[scale=0.2,baseline=0.1cm]
		\node at (0,0) [dot] (root) {};
		\node at (-2,3)  [dotred] (centerll) {};
		\node at (2,3)  [dotred] (centerrr) {};
		\draw[kernel1] (centerll) to
		node [sloped,below] {\small }     (root);
		\draw[kernel1] (centerrr) to
		node [sloped,below] {\small }     (root);
	\end{tikzpicture}\quad\to\quad
	3\,\begin{tikzpicture}[scale=0.2,baseline=0.1cm]
	\node at (0,0) [dot] (root) {};
	\node at (-2,3) [dotred] (centerl) {};
	\node at (+2,3) [dotred] (centerr) {};
	\node at (-2,6)  [dotred] (centerll) {};
	\draw[kernel1] (centerl) to
	node [sloped,below] {\small }     (root);
	\draw[kernel1] (centerr) to
	node [sloped,below] {\small }     (root);
	\draw[kernel1] (centerll) to
	node [sloped,below] {\small }     (centerl);
	\end{tikzpicture}
	\end{equation*}
	which coincides with $CM(\tau)$, as it can be readily verified through \eqref{Eq: CM weight}. The same procedure applied to $\tau'$ yields
	\begin{equation*}
		\bullet\quad\to\quad	\begin{tikzpicture}[scale=0.2,baseline=0.1cm]
			\node at (0,0) [dot] (root) {};
			\node at (0,3) [dotred] (center) {};
			\draw[kernel1] (center) to node [sloped,below] {\small }     (root);
		\end{tikzpicture}\quad \to\quad	
		\begin{tikzpicture}[scale=0.2,baseline=0.1cm]
			\node at (0,0) [dot] (root) {};
			\node at (-2,3)  [dotred] (centerll) {};
			\node at (2,3)  [dotred] (centerrr) {};
			\draw[kernel1] (centerll) to
			node [sloped,below] {\small }     (root);
			\draw[kernel1] (centerrr) to
			node [sloped,below] {\small }     (root);
		\end{tikzpicture}\quad\to\quad
		\frac{1}{2}\cdot 2\;
		\begin{tikzpicture}[scale=0.2,baseline=0.1cm]
			\node at (0,0) [dot] (root) {};
			\node at (-2,3) [dotred] (centerl) {};
			\node at (+2,3) [dotred] (centerr) {};
			\node at (-2,6)  [dotblue] (centerll) {\mbox{\small $1$}};
			\node at (2,6)  [dotblue] (centerrr) {\mbox{\small $1$}};
			\draw[kernel1] (centerl) to
			node [sloped,below] {\small }     (root);
			\draw[kernel1] (centerr) to
			node [sloped,below] {\small }     (root);
			\draw[kernel1] (centerrr) to
			node [sloped,below] {\small }     (centerr);
			\draw[kernel1] (centerll) to
			node [sloped,below] {\small }     (centerl);
		\end{tikzpicture}
	\end{equation*}
	where the two possible ways of simultaneously grafting $\doublebullet$  on distinct vertices is counterbalanced by the prefactor $\frac{1}{2}$. This second example suggests that an hypothetical extension of the Connes-Moscovici weights to $\cT_{\alpha,\beta}^\e$ should yield $CM^\e(\tau')=2$. Yet, as it well be clarified below, it is more convenient to include the prefactor $\frac{1}{2}$ inside it. Hence we search for a definition that would yield $CM^\e(\tau')=1$.
\end{example}
 Our goal is to keep the form of \eqref{Eq: CM weight} for the extended $CM^{\e}:\cT_{\alpha,\beta}^\e\to\N$. Yet, \eqref{Eq: tree factrorial classic} is ill-defined for trees lying in $\cT_{\alpha,\beta}^\e$ since, as observed above, the exotic decoration prevents us from seeing them as products of trees grafted on a common root via $B_+$. For this reason we resort to an alternative characterization of the tree factorial introduced in  \cite{kreimer1999chen}, that best suits our needs. For a given tree $\tau\in\cT$ we denote by $\cT^-(\tau)$ the space of subtrees of $\tau$ obtained by removing a single leaf and by $m_{\bullet}(\tau',\tau)$ the number of ways of cutting a lead from $\tau$ to obtain $\tau'$. Then we define
\begin{equation}\label{Eq: tree factorial kreimer}
	\frac{|\tau|}{\tau!}=\sum_{\tau'\in\cT^-(\tau)}\frac{m_{\bullet}(\tau',\tau)}{\tau'!}\,,
\end{equation}
where the sum runs over non-isomorphic trees. 
The proof of the equivalence between the iterative characterization, see \eqref{Eq: tree factrorial classic}, and \eqref{Eq: tree factorial kreimer} can be found in \cite[\S 5]{kreimer1999chen}. Contrary to the canonical definition proposed in the literature \cite{chartier2010algebraic,Hairer2006}, this version has the net advantage of being well defined on trees with directed closed paths as soon as we consider $\doublebullet$ as a leaf in the natural growth process.
\begin{definition}\label{Def: tree factorial}
	For $\tau\in\cT_{\alpha,\beta}^\e$, let us denote by $\cT_{\bulletalpha}^-(\tau)$ and $\cT_{\bulletbeta}^-(\tau)$ the collections of trees obtained from $\tau$ by removing a leaf $\bulletalpha$ or $\doublebullet$, respectively in all possible ways. Moreover we denote by $m_{\bulletalpha}(\tau',\tau)$ and $m_{\bulletbeta}(\tau',\tau)$ the number of ways of obtaining $\tau'$ from $\tau$ by removing a leaf of type $\bulletalpha$ or $\doublebullet$, respectively. Then we define the tree factorial $\tau!\in\N$ as
	\begin{equation*}
		\frac{|\tau|}{\tau!}:=\sum_{\tau'\in\cT_{\bulletalpha}^-(\tau)}\frac{m_{\bulletalpha}(\tau',\tau)}{\tau'!}+\sum_{\tau'\in\cT_{\bulletbeta}^-(\tau)}\frac{m_{\bulletbeta}(\tau',\tau)}{\tau'!}\,.
	\end{equation*}
\end{definition}

Instead of assuming for the Connes-Moscovici weight on $\cT^\e_{\alpha,\beta}$ an expression like the one in \eqref{Eq: CM weight}, we shall take our moves from the recursive definition proposed in \cite{kreimer1999chen} for trees $\tau\in\cT$:
\begin{equation}\label{Eq: first kreimer formula}
	\sigma(\tau)\,CM(\tau)=\sum_{\tau'\in\cT^-(\tau)}m_\bullet(\tau',\tau)\,\sigma(\tau')\,CM(\tau')\,.
\end{equation}
This identity, that characterizes the Connes-Moscovici weights of non-decorated trees, is stated in \cite{kreimer1999chen} without a detailed derivation, even if it is shown to imply a form for $CM$ that coincides with the character of the Butcher series representing the exact solution of an ODE. Before addressing its generalization to exotic coloured trees, we give a proof of \eqref{Eq: first kreimer formula} that highlights crucial aspects which carry over, with suitable adaptations, to the decorated setting. We adopt the notation of \cite{hoffman2003combinatorics} and, for a given $\tau'\in\cT^-(\tau)$, we denote by $n_{\bullet}(\tau',\tau)$ the number of way that we can attach a new vertex via an edge to $\tau'$ to obtain $\tau$, while by $m_{\bullet}(\tau',\tau)$ we denote the number of ways of cutting a leaf of $\tau$ to obtain $\tau'$.
\begin{lemma}\label{Lem: kreimer's formula}
	Denoting by $CM(\tau)$ the number of ways of building $\tau\in\cT$ via natural growth, the identity in \eqref{Eq: first kreimer formula} holds true. 
\end{lemma}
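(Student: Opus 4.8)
The plan is to encode natural growth as a single linear operator on the free vector space on rooted trees, read off a recursion for $CM$ from it, and then convert that recursion into the stated leaf-removal form by a double-counting argument that trades grafting sites for removable leaves at the price of the two symmetry factors.

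First I would set $N\tau:=\sum_{v\in\mathcal V(\tau)}(\tau\curvearrowright_v)$, where $\tau\curvearrowright_v$ denotes $\tau$ with one extra leaf grafted at $v$, and extend $N$ linearly to $\langle\cT\rangle$. Since every growth history of $\tau$ is recorded by exactly one monomial contribution to $N^{|\tau|-1}(\bullet)$, the defining count gives $N^{|\tau|-1}(\bullet)=\sum_{\tau}CM(\tau)\,\tau$. Writing $N^{|\tau|-1}(\bullet)=N\bigl(N^{|\tau|-2}(\bullet)\bigr)$ and extracting the coefficient of $\tau$ then yields the recursion
\[ CM(\tau)=\sum_{\tau'}m(\tau',\tau)\,CM(\tau')\,,\qquad m(\tau',\tau):=\#\{v\in\mathcal V(\tau'):(\tau'\curvearrowright_v)\cong\tau\}\,, \]
the sum running over isomorphism classes $\tau'$ with $|\tau'|=|\tau|-1$.

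The core of the proof is the symmetry relation
\[ \sigma(\tau)\,m(\tau',\tau)=\sigma(\tau')\,b(\tau,\tau')\,,\qquad b(\tau,\tau'):=\#\{\ell\text{ a leaf of }\tau:(\tau\setminus\ell)\cong\tau'\}\,, \]
which I would establish by double counting the set $S$ of pairs $(v,\phi)$ with $v\in\mathcal V(\tau')$ and $\phi\colon(\tau'\curvearrowright_v)\xrightarrow{\sim}\tau$. On one side, since the isomorphisms onto $\tau$ form an $\mathrm{Aut}(\tau)$-torsor, one has $|S|=m(\tau',\tau)\,\sigma(\tau)$. On the other side, sending $(v,\phi)$ to the image leaf $\ell:=\phi(\text{new vertex})$ together with the restriction $\phi|_{\tau'}\colon\tau'\xrightarrow{\sim}\tau\setminus\ell$ sets up a bijection onto the set of pairs $(\ell,\psi)$ with $\ell$ a leaf of $\tau$ and $\psi\colon\tau'\xrightarrow{\sim}\tau\setminus\ell$; its inverse regrafts the removed leaf at $\psi^{-1}(\text{parent of }\ell)$. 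Counting the latter set gives $b(\tau,\tau')\,\sigma(\tau')$, whence the claim.

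Finally I would multiply the recursion by $\sigma(\tau)$ and substitute the symmetry relation:
\[ \sigma(\tau)\,CM(\tau)=\sum_{\tau'}\sigma(\tau)\,m(\tau',\tau)\,CM(\tau')=\sum_{\tau'}\sigma(\tau')\,b(\tau,\tau')\,CM(\tau')\,. \]
Because $b(\tau,\tau')$ counts precisely the leaves of $\tau$ whose removal yields $\tau'$, the weighted sum over isomorphism classes repackages as a sum over the elements of $\cT^-(\tau)$ (one term per removable leaf), giving \eqref{Eq: first kreimer formula}. I would record, as a quick consistency check, that this multiset reading of $\cT^-(\tau)$ is the one forced by \eqref{Eq: tree factorial kreimer} already on the cherry, where both leaves return the path of length two. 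The main obstacle is the middle step: one must keep the orbit bookkeeping honest, i.e.\ verify that grafting sites in $\tau'$ and removable leaves in $\tau$ are weighted by the reciprocal automorphism groups $\sigma(\tau')^{-1}$ and $\sigma(\tau)^{-1}$; the torsor argument above is exactly the device that makes this exchange rigorous, and it is also the step that will later generalize to the exotic setting once $\doublebullet$ is treated as a single leaf carrying the compensating factor $\tfrac12$.
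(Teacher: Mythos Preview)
Your proposal is correct and follows essentially the same route as the paper: both start from the natural-growth recursion $CM(\tau)=\sum_{\tau'}\#(\tau'\to_\bullet\tau)\,CM(\tau')$ and then establish the key symmetry $\sigma(\tau)\,\#(\tau'\to_\bullet\tau)=\sigma(\tau')\,\#(\tau\to_-\tau')$, which is exactly your identity $\sigma(\tau)\,m(\tau',\tau)=\sigma(\tau')\,b(\tau,\tau')$. The only difference is in how that symmetry is argued: the paper fixes a grafting vertex $v$ and matches the stabilizer counts $\sigma^{-v}(\tau')=\sigma^{-\ell_v}(\tau)$, whereas you run the cleaner torsor/double-counting argument on pairs $(v,\phi)$; your version has the advantage of making the orbit bookkeeping explicit and is indeed the formulation that ports most directly to the exotic case.
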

\begin{proof}
The definition of $CM(\tau)$ as the number of ways of obtaining $\tau$ via natural growth implies the identification
\begin{align}\label{Eq: intermediate formula kreimer}
	CM(\tau)=\sum_{\tau'\in\cT^-(\tau)}CM(\tau')\,n_\bullet(\tau',\tau)=\sum_{\tau'\in\cT^-(\tau)}m_\bullet(\tau',\tau)\,CM(\tau')\frac{n_\bullet(\tau',\tau)}{m_\bullet(\tau',\tau)}\,,
\end{align}
where, in the second identity, we added for convenience the factor $m_\bullet(\tau',\tau)$. We recall that the sums run over non-isomorphic trees. We conclude by recalling a key result on the combinatorics of rooted trees \cite[Prop. 2.1]{hoffman2003combinatorics}: if $\tau'\in\cT^-(\tau)$, then
\begin{equation*}
	\frac{n_\bullet(\tau',\tau)}{m_\bullet(\tau',\tau)}=\frac{\sigma(\tau')}{\sigma(\tau)}\,.
\end{equation*}
Therefore \eqref{Eq: intermediate formula kreimer} coincides with \eqref{Eq: first kreimer formula}.
\end{proof}
As a second step, we wish to adapt this derivation of Kreimer's formula to the case at hand. Since the action of $L$ amounts to the grafting of a pair $\doublebullet$ weighted with a coefficient $\frac{1}{2}$, instead of including the latter into the elementary differential, we resort to a variant of the Connes-Moscovici weights that coincides with the number of ways of building $\tau\in\cT_{\alpha,\beta}^\e$ via a natural growth that involves $\bulletalpha$ and $\doublebullet$, multiplied by $2^{-|\mathcal{V}_\beta(\tau)|}$. As it turns out, this is the object that has a characterization of the form \eqref{Eq: first kreimer formula}. 

For a fixed tree $\tau\in\cT_{\alpha,\beta}^{\e}$, if $\tau'\in\cT_{\bulletalpha}^-(\tau)$ we denote by $n_{\bulletalpha}(\tau',\tau)$ the analogous of $n(\tau',\tau)$ defined above, which involves the grating of a vertex $\bulletalpha$. Accordingly, if $\tau'\in\cT_{\bulletbeta}^-(\tau)$ we will adopt the notations $n_{\bulletbeta}(\tau',\tau)$ for the number of ways of grafting a pair $\doublebullet$ on $\tau'$ to obtain $\tau$.

\begin{lemma}\label{Lem: CM exotic}
	For $\tau\in\cT_{\alpha,\beta}^\e$, let us denote by $CM^\e(\tau)$ the number of ways of obtaining $\tau$ via natural growth with respect to vertices $\bulletalpha$ and $\doublebullet$, weighted with $2^{-|\mathcal{V}_\beta(\tau)|}$. Then
	\begin{equation}\label{Eq: definition cm}	\sigma_\e(\tau)\,CM^\e(\tau)=\sum_{\tau'\in\cT_{\bulletalpha}^-(\tau)}m_{\bulletalpha}(\tau',\tau)\,\sigma_\e(\tau')\,CM^\e(\tau')+\sum_{\tau'\in\cT_{\bulletbeta}^-(\tau)}m_{\bulletbeta}(\tau',\tau)\,\sigma_\e(\tau')\,CM^\e(\tau')\,.
	\end{equation}
\end{lemma}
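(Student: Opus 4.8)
The plan is to follow the proof of the preceding lemma, replacing the single grafting $\bullet\curvearrowright$ by the two admissible growth moves of Definition~\ref{Def: grafting}: the grafting of a single $\bulletalpha$ and the simultaneous grafting of a pair $\doublebullet$, the latter being weighted by $\tfrac12$ in accordance with Theorem~\ref{Thm: effect grafting} and the definition of $CM^\e$. Since $CM^\e(\tau)$ is a weighted count of natural-growth histories of $\tau$, and every history terminates in exactly one move whose removal lands in $\cT^-_\e(\tau)$ (either an $\bulletalpha$-leaf, or a $\doublebullet$-pair both of whose vertices are leaves), peeling off this last move gives, in complete analogy with~\eqref{Eq: intermediate formula kreimer},
\begin{equation*}
	CM^\e(\tau)=\sum_{\tau'\in\cT^-_\e(\tau)}CM^\e(\tau')\,\frac{\#(\tau'\to\tau)}{\#(\tau\to_-\tau')}\,,
\end{equation*}
where $\#(\tau'\to\tau)$ is the weighted multiplicity of $\tau$ in $\bulletalpha\curvearrowright\tau'+\tfrac12\,\doublebullet\curvearrowright\tau'$ and $\#(\tau\to_-\tau')$ counts the removals of the relevant object from $\tau$ producing the class $\tau'$. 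The lemma then reduces to establishing, for each $\tau'\in\cT^-_\e(\tau)$, the symmetry ratio
\begin{equation*}
	\frac{\#(\tau'\to\tau)}{\#(\tau\to_-\tau')}=\frac{\sigma_\e(\tau')}{\sigma_\e(\tau)}\,,
\end{equation*}
after which multiplying through by $\sigma_\e(\tau)$ yields~\eqref{Eq: definition cm}.

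I would verify this ratio by distinguishing the type of the last move. If $\tau=\bulletalpha\curvearrowright_v\tau'$, the computation is verbatim that of the scalar case: letting $\mathrm{Aut}(\tau',\mathfrak{t},\mathfrak{e})$ act on grafting sites and $\mathrm{Aut}(\tau,\mathfrak{t},\mathfrak{e})$ on grafted leaves, the Orbit--Stabilizer theorem gives $\#(\tau'\to\tau)=\sigma_\e(\tau')/\sigma_\e^{-v}(\tau')$ and $\#(\tau\to_-\tau')=\sigma_\e(\tau)/\sigma_\e^{-\ell_v}(\tau)$, with $\sigma_\e^{-v}$ and $\sigma_\e^{-\ell_v}$ the orders of the subgroups of exotic automorphisms fixing, respectively, the site $v$ and the grafted leaf $\ell_v$ together with its edge. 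An automorphism of $\tau'$ fixing $v$ extends uniquely to one of $\tau$ fixing $\ell_v$, and conversely, so $\sigma_\e^{-v}(\tau')=\sigma_\e^{-\ell_v}(\tau)$ and the ratio collapses to $\sigma_\e(\tau')/\sigma_\e(\tau)$. Here the exotic decoration is inert, since an $\alpha$-graft touches no $\beta$-vertex and hence respects the pairing automatically.

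The genuinely new case — and the one I expect to be the main obstacle — is a simultaneous $\doublebullet$-graft, which attaches two $\beta$-vertices $b_1,b_2$ sharing a fresh common decoration on an a priori ordered pair of sites $(v,w)$ of $\tau'$, with $v=w$ allowed. The key observation is that the weight $\tfrac12$ precisely converts the sum over ordered site-pairs into a sum over unordered ones: off-diagonal configurations $v\neq w$ acquire weight $1$, while the diagonal $v=w$ retains an extra $\tfrac12$. I would again apply Orbit--Stabilizer, now to the action of $\mathrm{Aut}(\tau',\mathfrak{t},\mathfrak{e})$ on unordered site-pairs and of $\mathrm{Aut}(\tau,\mathfrak{t},\mathfrak{e})$ on $\beta$-pairs of leaves. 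The subtlety, which is the technical heart of the argument, is that the transposition $b_1\leftrightarrow b_2$ is decoration-preserving, hence an exotic automorphism of $\tau$, precisely when the two sites are symmetric (in particular when $v=w$); this contributes an extra factor of $2$ to $\sigma_\e(\tau)$ relative to the site count in $\tau'$, and it is exactly this factor that is cancelled by the $\tfrac12$ carried by the $\doublebullet$-move. Carrying out the bookkeeping while tracking this swap, distinguishing $v=w$ from $v\neq w$, yields once more $\#(\tau'\to\tau)/\#(\tau\to_-\tau')=\sigma_\e(\tau')/\sigma_\e(\tau)$.

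Summing the two cases over all $\tau'\in\cT^-_\e(\tau)$ then reproduces the right-hand side of~\eqref{Eq: definition cm}, concluding the argument. The whole difficulty is concentrated in the $\doublebullet$ case, where the nonlocal exotic pairing and the built-in $\tfrac12$ weight must be reconciled through the swap symmetry of the two grafted noise vertices; the $\alpha$ case and the final assembly are routine adaptations of the scalar lemma.
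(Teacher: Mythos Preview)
Your proposal is correct and follows essentially the same route as the paper: peel off the last growth step, split into the $\bulletalpha$ and $\doublebullet$ cases, and reduce to the symmetry-ratio identity via orbit--stabilizer. The only cosmetic difference is in the $\doublebullet$ bookkeeping: where you absorb the $\tfrac12$ by passing to unordered site-pairs and tracking the leaf-swap $b_1\leftrightarrow b_2$, the paper keeps the $\tfrac12$ explicit and instead introduces a ``branch'' symmetry factor $\sigma_\e(Br(v,w))\in\{1,2\}$ (the symmetry of the subtree joining $v,w$ to their common ancestor) that plays exactly the same role as your swap analysis.
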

\begin{proof}
	Following the strategy of proof of Lemma \ref{Lem: kreimer's formula}, the definition of $CM^\e$ implies 
	\begin{align}
		CM^\e(\tau)=\sum_{\tau'\in\cT_{\bulletalpha}^-(\tau)}m_{\bulletalpha}(\tau',\tau)\,CM^\e(\tau')\frac{n_{\bulletalpha}(\tau',\tau)}{m_{\bulletalpha}(\tau',\tau)}+	\sum_{\tau'\in\cT_{\bulletbeta}^{-}(\tau)}\frac{1}{2}m_{\bulletbeta}(\tau',\tau)\,CM^\e(\tau')\frac{n_{\bulletbeta}(\tau',\tau)}{m_{\bulletbeta}(\tau',\tau)}\,.\label{Eq: intermediate relation CM}
	\end{align}
	The first sum involves the grafting of a single vertex and a reasoning completely analogous to the one for undecorated trees applies. However, the second contribution to $CM^\e(\tau)$ is complicated by the nonlocal nature of the simultaneous grafting. We adapt the proof of \cite[Prop. 2.1]{hoffman2003combinatorics} to the decorated setting. If $\tau'\in\cT^-_{\bulletbeta}$, let $(v_1,v_2)\in\mathcal{V}(\tau')$ be such that, when a pair of vertices $(w_1,w_2)=(\bulletbetadec,\bulletbetadec)$ is grafted on them, the resulting tree coincides with $\tau$. Denote by $|O((v_1,v_2),\tau')|$ the cardinality of the orbit of $(v_1,v_2)$ under the automorphisms lying in $Aut(\tau')$ and by $|S((v_1,v_2),\tau'))|$ the cardinality of the subset of $Aut(\tau')$ that leaves $(v_1,v_2)$ unchanged. We shall divide the rest of the proof in two possible scenarios:
	\begin{enumerate}
		\item Assume that the pair $(w_1,w_2)\in\mathcal{V}_{\beta}(\tau)$ is linked to two distinct vertices $v_1\neq v_2\in\mathcal{V}(\tau')$. Contrary to the undecorated setting, $n_{\bulletbeta}(\tau',\tau)$ does not simply coincides with the size of the orbit of the pair $(v_1,v_2)$, since the latter account for the symmetry of the branch that contains them, which does not corresponds to two different spots for the grafting of $\doublebullet$. In addition, for a fixed choice of distinct $v_1$ and $v_2$, we always have two possible ways of grafting $\doublebullet$ by exchanging them. Hence, denoting by $\sigma_\e(b((v_1,v_2),\tau'))$ the cardinality of the symmetry group of the smaller branch $b_{\tau'}((v_1,v_2)$ containing the leaves $v_1$ and $v_2$, we have
		\begin{equation*}
			n_{\bulletbeta}(\tau',\tau)=2\frac{|O((v_1,v_2),\tau')|}{\sigma_\e(b((v_1,v_2),\tau'))}\,.
		\end{equation*}
		The Orbit-Stabilizer entails that
		\begin{equation*}
			n_{\bulletbeta}(\tau',\tau)=\frac{2}{\sigma_\e(b((v_1,v_2),\tau'))}\frac{\sigma_\e(\tau')}{|S((v_1,v_2),\tau'))|}\,.
		\end{equation*}
		To express $m_{\bulletbeta}(\tau',\tau)$ we apply the same reasoning, yet observing that the we remove the factor $2$ since its definition involves cuts instead of the grafting of $\doublebullet$. Then 
		\begin{equation*}	m_{\bulletbeta}(\tau',\tau)=\frac{|O((w_1,w_2),\tau')|}{\sigma_\e(b((w_1,w_2),\tau))}=\frac{\sigma_\e(\tau)}{\sigma_\e(b((w_1,w_2),\tau))|S((w_1,w_2),\tau))|}\,.
		\end{equation*}
		We conclude that
		\begin{equation*}
			\frac{1}{2}\frac{n_{\bulletbeta}(\tau',\tau)}{m_{\bulletbeta}(\tau',\tau)}=\frac{\sigma_\e(\tau')}{\sigma_\e(\tau)}\frac{\sigma_\e(b((w_1,w_2),\tau))}{\sigma_\e(b((v_1,v_2),\tau'))}=\frac{\sigma_\e(\tau')}{\sigma_\e(\tau)}\,,
		\end{equation*}
		where the last identity follows from the natural identification $\sigma_\e(b((v_1,v_2),\tau'))=\sigma_\e(b((w_1,w_2),\tau))$
		\item Assume that the pair $(w_1,w_2)\in\mathcal{V}_{\beta}(\tau)$ is linked to a single vertex $v\in\mathcal{V}(\tau')$. This time no corrections to $	n_{\bulletbeta}(\tau',\tau)$ are needed, since the pair $(w_1,w_2)$ behaves as a single effective vertex under grafting. 
		\begin{equation*}
			n_{\bulletbeta}(\tau',\tau)=|O(v,\tau')|=\frac{\sigma_\e(\tau')}{|S(v,\tau')|}\,,
		\end{equation*}
		while, when computing $m_{\bulletbeta}((w_1,w_2),\tau)$, we must divide the size of the orbit by $\sigma_\e(b((w_1,w_2),\tau))$ has this hidden symmetry does not identify two different cuts.  To wit, we have
		\begin{equation*}
			m_{\bulletbeta}((w_1,w_2),\tau)=\frac{|O((w_1,w_2),\tau)|}{\sigma_\e(b((w_1,w_2),\tau))}=\frac{\sigma_\e(\tau)}{2|S((w_1,w_2),\tau)|}\,,
		\end{equation*}
		where $\sigma_\e(b((w_1,w_2),\tau))=2$ as $w_1,w_2$ are linked to the same vertex $v$. Hence, making again the identification $|S((w_1,w_2),\tau)|=|S(v,\tau')|$, 
		\begin{equation*}
			\frac{1}{2}\frac{n_{\bulletbeta}(\tau',\tau)}{m_{\bulletbeta}(\tau',\tau)}=\frac{\sigma_\e(\tau')}{\sigma_\e(\tau)}\,.
		\end{equation*}
	\end{enumerate}
	On account of the previous analysis, the identity in  \eqref{Eq: intermediate relation CM} reads
	\begin{equation*}
			CM^\e(\tau)=\sum_{\tau'\in\cT_{\bulletalpha}^-(\tau)}m_{\bulletalpha}(\tau',\tau)\,CM^\e(\tau')\frac{\sigma_\e(\tau')}{\sigma_\e(\tau)}+	\sum_{\tau'\in\cT_{\bulletbeta}^{-}(\tau)}m_{\bulletbeta}(\tau',\tau)\,CM^\e(\tau')\frac{\sigma_\e(\tau')}{\sigma_\e(\tau)}\,.
	\end{equation*}
	and it coincides with \eqref{Eq: definition cm}.
\end{proof}
As shown in \cite{kreimer1999chen}, the \emph{leaf first} expression for the tree factorial \eqref{Eq: tree factorial kreimer} and the definition of $CM^\e$ as per \eqref{Eq: definition cm} lead to an elegant representation of the latter that coincides with the one for non-decorated trees. 
\begin{lemma}\label{Lem: connes-moscovici}
	For any $\tau\in\cT^\e_{\alpha,\beta}$, let $\tau!$ be the tree factorial as per  Definition \eqref{Def: tree factorial}. Then
	\begin{equation}\label{Eq: connes-moscovici weight}
		CM^\e(\tau)=\frac{|\tau|!}{\sigma_\mathfrak{e}(\tau)\tau!}\,.
	\end{equation}
\end{lemma}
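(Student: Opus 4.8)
The plan is to prove \eqref{Eq: connes-moscovici weight} by a single induction on the exotic grading $|\tau|_\e$, feeding the symmetry-weighted recursion for $CM^\e$ obtained in Lemma \ref{Lem: CM exotic} into the leaf-first recursion for the tree factorial of Definition \ref{Def: tree factorial}. The two identities to be combined are
$$\sigma_\e(\tau)\,CM^\e(\tau)=\sum_{\tau'\in\cT_\e^-(\tau)}\sigma_\e(\tau')\,CM^\e(\tau')\,,\qquad \frac{|\tau|_\e}{\tau!}=\sum_{\tau'\in\cT_\e^-(\tau)}\frac{1}{\tau'!}\,,$$
where both sums run over the \emph{same} multiset $\cT_\e^-(\tau)$ of trees obtained by deleting a single $\alpha$-leaf or a single $\beta$-pair (one summand per removable leaf or pair). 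The base case is $\tau=\bullet$, for which $CM^\e(\bullet)=1$, $\sigma_\e(\bullet)=1$, $\bullet!=1$ and $|\bullet|_\e=1$, so that both sides of \eqref{Eq: connes-moscovici weight} equal $1$.

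For the inductive step I would fix $\tau$ with $|\tau|_\e=n$ and assume \eqref{Eq: connes-moscovici weight} for every tree of exotic grading $n-1$. The structural remark that makes the induction go through is that every deletion recorded in $\cT_\e^-(\tau)$ lowers the exotic grading by exactly one unit: deleting a single $\alpha$-vertex removes one unit, while deleting a $\beta$-pair removes two $\beta$-vertices but, since each contributes $\tfrac12$ to $|\cdot|_\e$, again removes exactly one unit. Hence every $\tau'\in\cT_\e^-(\tau)$ satisfies $|\tau'|_\e=n-1$, so the inductive hypothesis $CM^\e(\tau')=(n-1)!/(\sigma_\e(\tau')\,\tau'!)$ applies uniformly across the whole multiset. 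Substituting it into the first recursion, the factor $\sigma_\e(\tau')$ cancels term by term and $(n-1)!$ factors out, leaving
$$\sigma_\e(\tau)\,CM^\e(\tau)=(n-1)!\sum_{\tau'\in\cT_\e^-(\tau)}\frac{1}{\tau'!}=(n-1)!\,\frac{|\tau|_\e}{\tau!}=\frac{n!}{\tau!}\,,$$
the middle equality being precisely the tree-factorial recursion of Definition \ref{Def: tree factorial}. Dividing by $\sigma_\e(\tau)$ yields \eqref{Eq: connes-moscovici weight}, closing the induction; this is the exact analogue of Kreimer's derivation in the undecorated case, now run on the exotic grading rather than on the vertex count.

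I do not expect a genuine obstacle once Lemma \ref{Lem: CM exotic} is in hand: the real combinatorial difficulty, namely the nonlocal effect of the simultaneous $\beta$-grafting encoded through the branch symmetry factors $\sigma_\e(Br(v,w))$, has already been absorbed into the recursion \eqref{Eq: definition cm}. The only point demanding genuine care is the grading bookkeeping. One must read the $|\tau|$ appearing in \eqref{Eq: connes-moscovici weight} and in Definition \ref{Def: tree factorial} as the exotic grading $|\tau|_\e$ and not the plain vertex count, precisely because a $\beta$-pair removal drops the vertex count by two whereas the induction must proceed in steps of one; using $|\tau|_\e$ is exactly what makes the telescoping $(n-1)!\cdot|\tau|_\e=n!$ legitimate and keeps all $\tau'\in\cT_\e^-(\tau)$ at a common grading. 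A secondary point worth stating explicitly is that $\cT_\e^-(\tau)$ is to be treated as a multiset indexing removable leaves and pairs, so that the two recursions are indexed identically and the term-by-term cancellation of $\sigma_\e(\tau')$ is valid irrespective of how many isomorphic $\tau'$ occur.
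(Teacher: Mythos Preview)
Your proof is correct and follows essentially the same route as the paper's: induct, feed the recursion of Lemma~\ref{Lem: CM exotic} into Definition~\ref{Def: tree factorial}, cancel $\sigma_\e(\tau')$ term by term, and use $(n-1)!\cdot n=n!$. Your explicit insistence that the induction run on the exotic grading $|\tau|_\e$ rather than the raw vertex count is a genuine clarification: the paper writes $|\tau|$ throughout but its examples (e.g.\ $\tau!=12$ for the closed-branch tree, $CM^\e(\tau)=4$ for the five-vertex example) only come out right if one reads $|\tau|$ as $|\tau|_\e$, and indeed the factoring of $(|\tau|-1)!$ in the paper's own induction step would fail for $\beta$-pair removals under the literal vertex count. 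Your remark that $\cT_\e^-(\tau)$ must be read as a multiset is likewise a useful sharpening.
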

\begin{proof}
	The thesis trivially holds true for the tree consisting of the sole root. Then we assume that \eqref{Eq: connes-moscovici weight} is satisfied by any exotic tree with at most $n-1$ vertices. For $\tau\in\cT^\e_{\alpha,\beta}$ such that $|\tau|=n$ we have
	\begin{align*}
		\sigma_{\mathfrak{e}}(\tau)\,CM^\e(\tau)&=\sum_{\tau'\in\cT_{\bulletalpha}^-(\tau)}m_{\bulletalpha}(\tau',\tau)\,\sigma_{\mathfrak{e}}(\tau')\,CM^\e(\tau')+\sum_{\tau'\in\cT_{\bulletbeta}^-(\tau)}m_{\bulletbeta}(\tau',\tau)\,\sigma_{\mathfrak{e}}(\tau')\,CM^\e(\tau')\\
		&=\sum_{\tau'\in\cT_{\bulletalpha}^-(\tau)}m_{\bulletalpha}(\tau',\tau)\,\sigma_{\mathfrak{e}}(\tau')\frac{|\tau'|!}{\sigma_\mathfrak{e}(\tau')\tau'!}+\sum_{\tau'\in\cT_{\bulletbeta}^-(\tau)}m_{\bulletbeta}(\tau',\tau)\,\sigma_{\mathfrak{e}}(\tau')\frac{|\tau'|!}{\sigma_\mathfrak{e}(\tau')\tau'!}\\
		&=(|\tau|-1)!\Bigl(\sum_{\tau'\in\cT_{\bulletalpha}^-(\tau)}\frac{m_{\bulletalpha}(\tau',\tau)}{\tau'!}+\sum_{\tau'\in\cT_{\bulletbeta}^-(\tau)}\frac{m_{\bulletbeta}(\tau',\tau)}{\tau'!}\Bigr)=(|\tau|-1)!\frac{|\tau|}{\tau!}=\frac{|\tau|!}{\tau!}\,,
	\end{align*}
	where in the second identity we resorted to the inductive hypothesis on $\tau'\in\cT_\e^-(\tau)$. This concludes the proof. 
\end{proof}
Eventually, we prove the main result of this section.
\begin{theorem}\label{Thm: main result S-series}
	Let $\alpha,\beta\in C^{2(n+1)}(\R)$ such that \eqref{Eq: SDE} admits a unique solution $u$  with initial condition $u_0\in\R$ on $[0,T]$ and let $f\in C^{2(n+1)}(\R)$. Then, for any $t\in[0,T]$, $\mathbb{E}^{u_0}[f(u_t)]$ coincides with the truncated power series
	\begin{equation}\label{Eq: Ito-Taylor trees}
		\mathbb{E}^{u_0}[f(u_t)]=\sum_{\substack{\tau\in\cT^\e_{\alpha,\beta}\\|\tau|\leq n}}\frac{|\tau|}{\sigma_\mathfrak{e}(\tau)\tau!}\Upsilon_{u_0}^{\alpha,\beta,f}(\tau)t^{|\tau|-1}+R^n(t)\,,
	\end{equation}
	where $\tau!$ is the extended tree factorial as per \eqref{Eq: tree factorial kreimer}, while $\mathfrak{l}(\tau)$ is the number of edges of $\tau$, see Definition \ref{Def: rxotic coloured trees}. The remainder has the form
	\begin{equation*}	R^n(t)=\Bigl(\sum_{\substack{\tau\in\mathcal{T}^{\mathfrak{e}}_{\alpha,\beta}\\|\tau|=n+1}}\frac{n+1}{\sigma_{\mathfrak{e}}(\tau)\,\tau!}\Upsilon_{u_{t_\tau}}^{\alpha,\beta,f}(\tau)\Bigr)\,t^{n}\,,
	\end{equation*}
	for a random time $t_\tau\in[0,T]$.
\end{theorem}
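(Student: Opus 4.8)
The plan is to transport the It\^o--Taylor expansion of expectations in \eqref{Eq: expansion Ef} onto exotic coloured trees, matching each analytic term with a combinatorial sum. The starting point is
\[
\mathbb{E}^{u_0}[f(u_t)]=\sum_{k=0}^n\frac{t^k}{k!}L^kf(u_0)+R^n_t,
\]
where $R^n_t$ is the iterated simplex integral of $L^{n+1}f(u_\cdot)$. First I would dispose of the remainder: under the hypothesis $\alpha,\beta,f\in C^{2(n+1)}(\R)$ the function $L^{n+1}f$ is well defined and of polynomial growth, so that the well-posedness of \eqref{Eq: SDE}, hence the finiteness of the moments of $u_s$ uniformly on $[0,T]$, yields $\sup_{s\in[0,T]}|\mathbb{E}^{u_0}[L^{n+1}f(u_s)]|<\infty$; the $(n+1)$-fold integral over the simplex then gives $R^n(t)=O(t^{n+1})$, as recalled in Section~\ref{Sec: ito-taylor}. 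It therefore remains to identify, for each $0\le k\le n$, the purely analytic coefficient $\tfrac1{k!}L^kf(u_0)$ with the tree sum of exotic grading $\mathfrak{l}(\tau)=k$.

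The bridge is Theorem~\ref{Thm: effect grafting}, which shows that under the elementary differential the generator $L$ acts as the grafting operator $\bulletalpha\curvearrowright+\tfrac12\doublebullet\curvearrowright$. Writing $f(u_0)=\Upsilon_{u_0}^{\alpha,\beta,f}(\bullet)$ and iterating this identity gives
\[
L^kf(u_0)=\Upsilon_{u_0}^{\alpha,\beta,f}\!\Big(\big(\bulletalpha\curvearrowright+\tfrac12\doublebullet\curvearrowright\big)^{k}\bullet\Big),
\]
so that expanding the $k$-th power produces exactly the sum over all natural-growth histories of length $k$ started from the root, each $\doublebullet$-grafting carrying the weight $\tfrac12$. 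Every such history yields a tree with $\mathfrak{l}(\tau)=k$, so collecting terms gives $L^kf(u_0)=\sum_{\mathfrak{l}(\tau)=k}c(\tau)\,\Upsilon_{u_0}^{\alpha,\beta,f}(\tau)$, and the heart of the matter is to prove that the coefficient $c(\tau)$ is precisely the generalized weight $CM^\e(\tau)$.

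I would establish $c(\tau)=CM^\e(\tau)$ by induction on $\mathfrak{l}(\tau)$, reusing the symmetry bookkeeping of Lemma~\ref{Lem: CM exotic}. The base case $\tau=\bullet$ is immediate. For the inductive step, Theorem~\ref{Thm: effect grafting} and the hypothesis give, for a target tree $\tau''$,
\[
c(\tau'')=\sum_{\tau}CM^\e(\tau)\Big(\#(\tau\to_{\bulletalpha}\tau'')+\tfrac12\#(\tau\to_{\doublebullet}\tau'')\Big),
\]
the sum running over iso classes with $\mathfrak{l}(\tau)=\mathfrak{l}(\tau'')-1$ and the multiplicities counting the graft targets (a single vertex, resp.\ an ordered pair) producing $\tau''$. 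Inserting the relations $\#(\tau\to_{\bulletalpha}\tau'')=\tfrac{\sigma_\e(\tau)}{\sigma_\e(\tau'')}\#(\tau''\to_{-\bulletalpha}\tau)$ and $\tfrac12\#(\tau\to_{\doublebullet}\tau'')=\tfrac{\sigma_\e(\tau)}{\sigma_\e(\tau'')}\#(\tau''\to_{-\doublebullet}\tau)$ derived in the proof of Lemma~\ref{Lem: CM exotic} converts the graft-counts into cut-counts; the latter are exactly the multiplicities with which each $\tau$ occurs in the multiset $\cT_\e^-(\tau'')$, so the sum collapses to $\tfrac1{\sigma_\e(\tau'')}\sum_{\tau\in\cT_\e^-(\tau'')}\sigma_\e(\tau)CM^\e(\tau)$, which equals $CM^\e(\tau'')$ by \eqref{Eq: definition cm}. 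Finally Lemma~\ref{Lem: connes-moscovici} rewrites $CM^\e(\tau)=\tfrac{|\tau|!}{\sigma_\e(\tau)\tau!}$, and assembling $\sum_{k=0}^n\frac{t^k}{k!}\sum_{\mathfrak{l}(\tau)=k}(\cdots)$ reproduces \eqref{Eq: Ito-Taylor trees}.

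The main obstacle is this last combinatorial identification, specifically the passage through the nonlocal simultaneous grafting $\doublebullet\curvearrowright$: one must keep careful track of when the factor $\tfrac12$ is, or is not, compensated by the ordered sum over target pairs and by the symmetry of the branch $Br(v,w)$, exactly as in Lemma~\ref{Lem: CM exotic}. The delicate point is that iterating $L$ naturally produces a count of \emph{ordered} graft targets, whereas $CM^\e$ is a weighted count over the \emph{multiset} of leaf removals; the reconciliation hinges on the cut-counts $\#(\tau''\to_{-}\tau)$ turning the iso-class sum into the multiset sum, which is precisely what makes the recursion \eqref{Eq: definition cm} applicable. A secondary, more routine point is ensuring that the remainder estimate genuinely holds under the stated regularity, i.e.\ that $L^{n+1}f$ composed with the diffusion is uniformly integrable on $[0,T]$.
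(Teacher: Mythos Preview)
Your proposal is correct and follows essentially the same route as the paper: both start from the It\^o--Taylor expansion \eqref{Eq: expansion Ef}, lift the action of $L$ to the grafting operator via Theorem~\ref{Thm: effect grafting}, and close the induction by converting graft-counts into cut-counts using the symmetry relations from the proof of Lemma~\ref{Lem: CM exotic}, together with the recursion \eqref{Eq: definition cm} and the closed form of Lemma~\ref{Lem: connes-moscovici}. The only cosmetic difference is that you first name the coefficient $c(\tau)$ and then identify it with $CM^\e(\tau)$, whereas the paper carries the closed form $\tfrac{|\tau|!}{\sigma_\e(\tau)\tau!}$ through the induction directly; for the remainder the paper uses a mean-value argument with a random time rather than your moment-bound formulation, but both yield the same $O(t^{n+1})$.
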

\begin{proof} We follow the lines of \cite[3.2]{rossler2004stochastic}, with the key difference that our definition of $CM^\e$ allows for a concise proof that yields a more detailed expansion. Resorting to the It\^o-Taylor expansion in \eqref{Eq: expansion Ef} and focusing on the truncated part, we must check that
	\begin{equation*}
		\sum_{k=0}^nL^kf(u_0)\frac{t^k}{k!}=\sum_{\substack{\tau\in\mathcal{T}^{\mathfrak{e}}_{\alpha,\beta}\\|\tau|\leq n+1}}\frac{|\tau|}{\sigma_{\mathfrak{e}}(\tau)\,\tau!}\Upsilon_{u_0}^{\alpha,\beta,f}(\tau) t^{|\tau|-1}\,,
	\end{equation*} 
	or, equivalently, that for all $k\leq n$
	\begin{equation}\label{Eq: inductive hypothesis}
		L^kf(u_0)=\sum_{\substack{\tau\in\mathcal{T}^{\mathfrak{e}}_{\alpha,\beta}\\|\tau|=k+1}}\frac{|\tau|!}{\sigma_{\mathfrak{e}}(\tau)\,\tau!}\Upsilon_{u_0}^{\alpha,\beta,f}(\tau)\,.
	\end{equation} 
We proceed by induction on the number of vertices. The first step involves only  the tree consisting of a single root, for which the identity trivially holds true. Now assume that \eqref{Eq: inductive hypothesis} holds true for all $m\leq k$ and consider
\begin{align}\label{Eq: Lk}
L^{k+1}f(u_0)=L\sum_{\substack{\tau\in\mathcal{T}^{\mathfrak{e}}_{\alpha,\beta}\\|\tau|= k+1}}\frac{|\tau|!}{\sigma_{\mathfrak{e}}(\tau)\,\tau!}\Upsilon_{u_0}^{\alpha,\beta,f}(\tau)=\sum_{\substack{\tau\in\mathcal{T}^{\mathfrak{e}}_{\alpha,\beta}\\|\tau|= k+1}}\frac{|\tau|!}{\sigma_{\mathfrak{e}}(\tau)\,\tau!}L\Upsilon_{u_0}^{\alpha,\beta,f}(\tau)\,.
\end{align}	
By Theorem \ref{Thm: effect grafting} we can rewrite the expression in \eqref{Eq: Lk} as
\begin{align}\label{Eq: intermediate equation proof}
	\sum_{\substack{\tau\in\mathcal{T}^{\mathfrak{e}}_{\alpha,\beta}\\|\tau|=k+1}}\frac{|\tau|!}{\sigma_{\mathfrak{e}}(\tau)\,\tau!}\Upsilon_{u_0}^{\alpha,\beta,f}\Bigl(\bulletalpha\curvearrowright \tau+\frac{1}{2}	\doublebullet\curvearrowright \tau\Bigr)\,.
\end{align}
We focus first on the terms involving the grafting of a vertex of colour $\alpha$. 
\begin{align*}
	\sum_{\substack{\tau\in\mathcal{T}^{\mathfrak{e}}_{\alpha,\beta}\\|\tau|=k+1}}&\frac{|\tau|!}{\sigma_{\mathfrak{e}}(\tau)\,\tau!}\Upsilon_{u_0}^{\alpha,\beta,f}(\bulletalpha\curvearrowright \tau)=\sum_{\substack{\tau'\in\mathcal{T}_{\alpha,\beta}^{\mathfrak{e}}\\|\tau'|=k+2}}\Bigl(\sum_{\substack{\tau\in\cT_{\alpha,\beta}^\e\\\bulletalpha\curvearrowright\tau=\tau'}}CM^\e(\tau)\,n_{\bulletalpha}(\tau,\tau')\Bigr)\Upsilon_{u_0}^{\alpha,\beta,f}(\tau')\\
	&=\sum_{\substack{\tau'\in\mathcal{T}_{\alpha,\beta}^{\mathfrak{e}}\\|\tau'|=k+2}}\Bigl(\sum_{\substack{\tau\in\cT_{\bulletalpha}^-(\tau')}}m_{\bulletalpha}(\tau,\tau')\,CM^\e(\tau)\,\frac{n_{\bulletalpha}(\tau,\tau')}{m_{\bulletalpha}(\tau,\tau')}\Bigr)\Upsilon_{u_0}^{\alpha,\beta,f}(\tau')\,.
\end{align*}
Analogously, we can rewrite the second term as
\begin{align*}
	&\sum_{\substack{\tau\in\mathcal{T}^{\mathfrak{e}}_{\alpha,\beta}\\|\tau|=k+1}}\frac{|\tau|!}{2\sigma_{\mathfrak{e}}(\tau)\,\tau!}\Upsilon_{u_0}^{\alpha,\beta,f}(\doublebullet\curvearrowright \tau)=	\sum_{\substack{\tau'\in\mathcal{T}_{\alpha,\beta}^{\mathfrak{e}}\\|\tau'|=k+2}}\Bigl(\sum_{\substack{\tau\in\mathcal{T}^{\mathfrak{e}}_{\alpha,\beta}\\\doublebullet\curvearrowright \tau=\tau'}}\frac{CM^\e(\tau)}{2}\,n_{\bulletbeta}(\tau,\tau')\Bigr)\Upsilon_{u_0}^{\alpha,\beta,f}(\tau')\\
	&=\sum_{\substack{\tau'\in\mathcal{T}_{\alpha,\beta}^{\mathfrak{e}}\\|\tau'|=k+2}}
	\sum_{\tau\in\cT_{\bulletbeta}^{-}(\tau)}m_{\bulletbeta}(\tau,\tau')\,CM^\e(\tau)\frac{n_{\bulletbeta}(\tau,\tau')}{2m_{\bulletbeta}(\tau,\tau')}\Upsilon_{u_0}^{\alpha,\beta,f}(\tau')\,,
\end{align*}
where we resorted to the notations introduced above. A comparison with Lemma \ref{Lem: CM exotic}, see \eqref{Eq: intermediate relation CM}, yields
\begin{align*}
	&\sum_{\substack{\tau\in\mathcal{T}^{\mathfrak{e}}_{\alpha,\beta}\\|\tau|=k+1}}\frac{|\tau|!}{\sigma_{\mathfrak{e}}(\tau)\,\tau!}\Upsilon_{u_0}^{\alpha,\beta,f}\Bigl(\bulletalpha\curvearrowright \tau+\frac{1}{2}	\doublebullet\curvearrowright \tau\Bigr)\\
	&=\sum_{\substack{\tau'\in\mathcal{T}_{\alpha,\beta}^{\mathfrak{e}}\\|\tau'|=k+2}}\Bigl(\sum_{\substack{\tau\in\cT_{\bulletalpha}^-(\tau')}}m_{\bulletalpha}(\tau,\tau')\,CM^\e(\tau)\,\frac{n_{\bulletalpha}(\tau,\tau')}{m_{\bulletalpha}(\tau,\tau')}+\sum_{\tau\in\cT_{\bulletbeta}^{-}(\tau)}m_{\bulletbeta}(\tau,\tau')\,CM^\e(\tau)\frac{n_{\bulletbeta}(\tau,\tau')}{2m_{\bulletbeta}(\tau,\tau')}\Bigr)\Upsilon_{u_0}^{\alpha,\beta,f}(\tau')\\
	&=\sum_{\substack{\tau'\in\mathcal{T}_{\alpha,\beta}^{\mathfrak{e}}\\|\tau'|=k+2}}CM^\e(\tau')\Upsilon_{u_0}^{\alpha,\beta,f}(\tau')\,.
\end{align*}
Eventually, Lemma \ref{Lem: connes-moscovici} allows to conclude that \eqref{Eq: inductive hypothesis} holds for $k=n+1$ to close the induction argument. From the derivation of the It\^o-Taylor expansion of Section \ref{Sec: ito-taylor} it follows that the reminder of the truncated series reads
\begin{equation*}
	R^n(t)=\int_{[0,t]_{\leq}^{n}}\mathbb{E}^{u_0}[L^{n}f(u_{s_{n}})]\,ds_{n}\ldots ds_1\,.
\end{equation*}
To determine its behaviour in time we observe that the solution to \eqref{Eq: SDE} admits an almost surely continuous version, which we consider. In addition, smoothness of $\alpha,\beta,f$ up to $2n$ ensures that $L^{n}f$ is a continuous function. Then, for each $\tau\in\cT^\e_{\alpha,\beta}$ such that $|\tau|=n+1$, there exists a random time $t_\tau\in[0,T]$ such that 
\begin{equation*}
	\mathbb{E}^{u_0}[L^{n}f(u_{s_{n+1}})]=\sum_{\substack{\tau\in\cT^\e_{\alpha,\beta}\\|\tau|=n+1}}\frac{|\tau|!}{\sigma_\e(\tau)\tau!}\Upsilon_{u_{t_\tau}}^{\alpha,\beta,f}\,.
\end{equation*}
Hence
\begin{equation*}
	R^n(t)=\Bigl(\sum_{\substack{\tau\in\mathcal{T}^{\mathfrak{e}}_{\alpha,\beta}\\|\tau|=n+1}}\frac{n+1}{\sigma_{\mathfrak{e}}(\tau)\,\tau!}\Upsilon_{u_{t_\tau}}^{\alpha,\beta,f}(\tau)\Bigr)\,t^{n}\,.
\end{equation*}
This concludes the proof.
\end{proof}

\subsection{Comments on convergence}\label{Sec: asymptotic}
In this section we outline the problems that hinder a possible proof of convergence of the series under reasonable assumptions on the coefficients.
First let us comment on the requirement that $\alpha,\beta,f \in C^\infty(\R)$ are real analytic. If this is the case and the solution to \eqref{Eq: SDE} with initial condition $u_0\in\R$ exists on $[0,T]$, then Theorem \ref{Thm: main result S-series} ensures that the semigroup of the solution admits the exotic B-series representation
\begin{equation}\label{Eq: convergent exact expansion expectation}
	\mathbb{E}[f(u_t)]=\sum_{\tau\in\cT_{\alpha,\beta}^\mathfrak{e}}\frac{|\tau|}{\sigma_\mathfrak{e}(\tau)\tau!}\Upsilon_{u_0}^{\alpha,\beta,f}(\tau)\,t^{|\tau|-1}\,,\hspace{1.5cm}t\in[0,T]\,.
\end{equation}
The requirement of analyticity of the coefficients, as well as of the test function $f$, is crucial: in the absence of analyticity, we could perturb $\alpha,\beta$ and $f$ with smooth functions whose derivative of all orders vanish at $u_0$. This would modify the right-hand side of \eqref{Eq: convergent exact expansion expectation} while leaving the right-hand side unchanged, hence leading to an absurd. 

For what concerns the convergence problem, we comment on two different approaches. In \cite{rossler2004stochastic} the author estimates the error in \eqref{Eq: Ito-Taylor trees} under the assumption of Lipschitz continuous coefficients with uniform polynomial growth of their derivatives to show that is vanishes in the limit as $n$ goes to infinity. Such assumptions allow to control the elementary differentials by resorting to a classical bound on moments of the solution. Yet this approach falls short of accounting for the growing degeneracy ascribed to the Connes-Moscovici weights, which contrasts the vanishing of the analytic contributions. 
More specifically, in order for this estimate to be conclusive we would need a suitable control on the Connes-Moscovici weights of trees of order $n$, as well as on the number of exotic trees at a fixed order, which are out of reach for the moment. This is a common problem when studying convergence of series over trees, which was overcome in the case of B-series for the solutions to ODEs and controlled equations, see \cite[Thm. 5.1]{gubinelli2010ramification}. The strategy is to bound the series of interest with the B-series of the exact solution to a suitable ODE, which is absolutely convergent for small enough time. Assume that $\alpha,\beta$ and $f$ are analytic functions with radii $r_\alpha,r_\beta$ and $r_f$ around $u_0\in\R$, respectively. A direct application of Cauchy formula for complex analytic functions entails that there exist constants $M_\alpha,M_\beta,M_f>0$ such that, for all $n\in\N$,
\begin{align*}
	|f^{(n)}(u_0)|\leq \frac{M_f n!}{r_f^{n}}\,, \hspace{2cm} |\alpha^{(n)}(u_0)|\leq \frac{M_\alpha n!}{r_\alpha^{n}}\,,\hspace{2cm}|\beta^{(n)}(u_0)|\leq \frac{M_\beta n!}{r_\beta^{n}}\,.
\end{align*}
In addition let us define, $R:=\min\{r_f,r_\alpha,r_\beta\}$ and $M=\max\{M_f,M_\alpha,M_\beta\}$. Then it follows that, for $h\in\{f,\alpha,\beta\}$,
\begin{equation}\label{Eq: derivatives h}
	|h^{(n)}(u_0)|\leq \frac{Mn!}{R^n}\,.
\end{equation}
In particular, these bounds imply that the elementary differential $\Upsilon^{\alpha,\beta,f}_{u_0}(\tau)$ is controlled by the elementary differential of the differential equation
\begin{equation}\label{Eq: equation r}
	\dot{r}=g(r)\,,\hspace{1cm} r_0=0\,,\hspace{2cm}g(r):=MR(R-r)^{-1}
\end{equation}
since the right-hand side coincides with $g^{(n)}(0)$. To wit, denoting by $\bar{\tau}\in\cT$ the undecorated rooted tree obtained from $\tau$ by forgetting the colour of each vertex and the exotic decoration, we have
\begin{align*}
	|\Upsilon^{\alpha,\beta,f}_{u_0}(\tau)|\leq \Upsilon^{g}_0(\bar{\tau})\,.	
\end{align*}
As a result we can bound the contributions at order $n$ as
\begin{equation*}
	\sum_{\substack{\tau\in\cT_{\alpha,\beta}^\mathfrak{e}\\|\tau|=n}}\frac{|\tau|}{\sigma_\mathfrak{e}(\tau)\tau!}|\Upsilon_{u_0}^{\alpha,\beta,f}(\tau)|\,t^{|\tau|-1}\leq\sum_{\substack{\tau\in\cT_{\alpha,\beta}^\mathfrak{e}\\|\tau|=n}}\frac{|\tau|}{\sigma_\mathfrak{e}(\tau)\tau!}\Upsilon_{0}^{g}(\bar{\tau})\,t^{|\tau|-1}\,.
\end{equation*}
If we where able to bound the right-hand side with the Butcher series of \eqref{Eq: equation r}, then a reasoning analogous to the one adopted in \cite{gubinelli2010ramification} would apply, entailing convergence of the series for small enough times. Unfortunately this step seems out of reach as the combinatorial contributions may change wildly when switching from an exotic tree to its undecorated counterpart. Another possibility would be to resort to the reduction operator of Appendix \ref{Sec: reduction}, but again this operation does not interact well either with the elementary differential or with the combinatorial contributions. In Appendix \ref{Sec: applications} we consider two examples in which the linear structure of the coefficients ensures convergence uniformly in time. 

\subsection{Realization map and the Connes-Moscovici weight}\label{Sec: Realization map and the Connes-Moscovici weight}

The B-series expansion \eqref{Eq: convergent exact expansion expectation} cannot be readily compared with an analogous expansion based on the path integral representation of the Feller semigroup of the It\^o process. The reason resides in the different role that combinatorial trees play in the It\^o-Taylor expansion and in the perturbative expansion of the path integral measure. As the proof of Theorem \ref{Thm: main result S-series} shows, trees are used to account for the combinatorics of the contributions generated by the iterated action of the operator $L$, while the behaviour in time simply reduces to an iterated integral in time over a simplex. On the other hand, in the MSR expansion the behaviour in time is dictated by branched integrals represented by exotic trees, as it is the case in quantum field theory where expectations are represented as series of integrals described by Feynman diagrams \cite{berglund2022perturbation}. This section is devoted to finding a representation of \eqref{Eq: convergent exact expansion expectation} in terms of the realization map, see Definition \ref{Def: realization map butcher}.

The action of a realization map analogous to the one in \ref{Def: realization map butcher} on non-coloured rooted trees is well understood and boils down to a specific (non-rough) instance of geometric rough paths \cite{gubinelli2010ramification}.
Recent results cover the algebraic aspects of multiple stochastic integrals, \emph{i.e.} the iterated integrals introduced in Section \ref{Sec: ito-taylor}, in connection to generalizations of Chen's formula, see for instance \cite{malham2009stochastic, ebrahimi2012algebraic}.
However, the presence of exotic decorations makes our framework intractable with standard techniques, mainly due to the nonlocal features introduced by closed structures. Our goal is to investigate the action of the realization map on exotic trees to make contact with the contributions to the B-series expansion. In Appendix \ref{Sec: reduction} we study the action of $\Pi^\e$ on closed branches. Here instead we adopt a strategy based on a novel representation of the tree factorial. For a fixed $\tau\in\cT^\e_{\alpha,\beta}$, we introduce the family $\mathcal{R}_{\e}^-(\tau)\subset \cT_{\alpha,\beta}$ of trees obtained from $\tau$ by removing one of the \emph{effective edges} that link the branches to the root and by grafting the rest of the branch to the root. An edge is effective if
\begin{itemize}
	\item it connects the root with a vertex of colour $\alpha$,
	\item it is a double edge that links the root with a pair of $\beta$-vertices with the same exotic decoration. 
\end{itemize}
Note that if an edge links a $\beta$-vertex to the root, but not the other paired one, it is not effective and cannot be cut when looking for trees lying in $\mathcal{R}_{\e}^-(\tau)$. 
\begin{example}
	Considering once again the tree
	\begin{equation*}
		\tau=
		\begin{tikzpicture}[scale=0.2,baseline=0.1cm]
			\node at (0,0) [dot] (root) {};
			\node at (-2,3) [dotred] (centerl) {};
			\node at (+2,3) [dotred] (centerr) {};
			\node at (-2,6)  [dotblue] (centerll) {\mbox{\small $1$}};
			\node at (2,6)  [dotblue] (centerrr) {\mbox{\small $1$}};
			\draw[kernel1] (centerl) to
			node [sloped,below] {\small }     (root);
			\draw[kernel1] (centerr) to
			node [sloped,below] {\small }     (root);
			\draw[kernel1] (centerrr) to
			node [sloped,below] {\small }     (centerr);
			\draw[kernel1] (centerll) to
			node [sloped,below] {\small }     (centerl);
		\end{tikzpicture}
	\end{equation*}
	The removal of the effective edges entering the root, together with the vertices at their top, leads to
	\begin{equation*}
		\mathcal{R}_{\e}^-(\tau)=\{\quad \begin{tikzpicture}[scale=0.2,baseline=0.1cm]
			\node at (0,0) [dot] (root) {};
			\node at (+2,3) [dotred] (centerr) {};
			\node at (-2,3)  [dotblue] (centerll) {\mbox{\small $1$}};
			\node at (2,6)  [dotblue] (centerrr) {\mbox{\small $1$}};
			\draw[kernel1] (centerll) to
			node [sloped,below] {\small }     (root);
			\draw[kernel1] (centerr) to
			node [sloped,below] {\small }     (root);
			\draw[kernel1] (centerrr) to
			node [sloped,below] {\small }     (centerr);
		\end{tikzpicture}
		\quad,\quad 
		\begin{tikzpicture}[scale=0.2,baseline=0.1cm]
			\node at (0,0) [dot] (root) {};
			\node at (+2,3) [dotblue] (centerr) {\mbox{\small $1$}};
			\node at (-2,3)  [dotred] (centerll) {};
			\node at (-2,6)  [dotblue] (centerrr) {\mbox{\small $1$}};
			\draw[kernel1] (centerll) to
			node [sloped,below] {\small }     (root);
			\draw[kernel1] (centerr) to
			node [sloped,below] {\small }     (root);
			\draw[kernel1] (centerrr) to
			node [sloped,below] {\small }     (centerl);
		\end{tikzpicture}
		\}
	\end{equation*}
\end{example}
\begin{theorem}\label{Thm: equivalence factorials}
	Given $\tau\in\cT_{\alpha,\beta}^\e$ such that $|\tau|> 1$ and denoting by $\tau!$ the tree factorial of $\tau\in\cT^\e_{\alpha,\beta}$ as per  \eqref{Eq: tree factorial kreimer}, it holds
	\begin{equation}\label{Eq: alterntive version factorial}
		\frac{|\tau|!}{\tau!}=\sum_{\tau'\in\mathcal{R}_{\e}^-(\tau)}\frac{|\tau'|!}{\tau'!}\,.
	\end{equation}
\end{theorem}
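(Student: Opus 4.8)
The plan is to recognize $|\tau|!/\tau!$ as a count of increasing labellings and to extract two recursions from this count: the leaf-removal recursion implicit in Definition \ref{Def: tree factorial}, and the root-removal recursion, which is exactly \eqref{Eq: alterntive version factorial}. Write $m(\tau):=|\tau|!/\tau!$, with $m(\bullet)=1$. First I would recast the defining relation in terms of $m$. Since deleting either an $\alpha$-leaf $\bulletalpha$ or a pair $\doublebullet$ of $\beta$-leaves lowers the exotic grading by exactly one (a $\beta$-pair contributes $\tfrac12\cdot 2=1$), every $\sigma\in\cT_\e^-(\tau)$ has $|\sigma|=|\tau|-1$. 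Multiplying $|\tau|/\tau!=\sum_{\sigma\in\cT_\e^-(\tau)}1/\sigma!$ by $(|\tau|-1)!$ then gives the clean leaf-removal recursion
\begin{equation*}
	m(\tau)=\sum_{\sigma\in\cT_\e^-(\tau)}m(\sigma)\,.
\end{equation*}

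Next comes the combinatorial interpretation. Unfolding this recursion down to $m(\bullet)=1$ shows that $m(\tau)$ counts the dismantling sequences of $\tau$, i.e.\ the sequences of leaf-removals (each step deleting an $\alpha$-leaf, or a pair $\doublebullet$ of $\beta$-leaves) reducing $\tau$ to the bare root; reading such a sequence backwards, $m(\tau)$ equals the number of natural-growth build sequences of $\tau$. I would encode a build sequence as an \emph{admissible labelling}: a bijection from the effective vertices of $\tau$---each $\alpha$-vertex, and each $\beta$-pair counted as a single effective vertex---to $\{1,\dots,\mathfrak{l}(\tau)\}$ such that every effective vertex receives a label strictly larger than those carried by its graph-parents. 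This is consistent with the identity $m(\tau)=\sigma_\e(\tau)\,CM^\e(\tau)$ coming from Lemma \ref{Lem: connes-moscovici}, since $\sigma_\e CM^\e$ is precisely the number of labelled natural growths.

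The root-removal recursion then follows by inspecting the label $1$. The effective vertex labelled $1$ is grafted onto the bare root, hence is either an $\alpha$-vertex adjacent to the root or a $\beta$-pair both of whose vertices are adjacent to the root; in either case it is precisely an effective edge at the root in the sense introduced before the theorem, and conversely every effective edge arises this way. Fixing such an effective edge $e$, let $\tau'_e\in\mathcal{R}_\e^-(\tau)$ be obtained by deleting the head of $e$ and regrafting its children to the root. Deleting the label-$1$ vertex and subtracting $1$ from every remaining label maps admissible labellings of $\tau$ with $e$ labelled $1$ to admissible labellings of $\tau'_e$; this is a bijection, since admissibility is preserved---the regrafted subtrees now hang directly from the root, which only relaxes an order constraint---and the inverse is reconstructed from the knowledge of which root-branches of $\tau'_e$ stemmed from $e$. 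Hence $\#\{\text{admissible labellings of }\tau:\ \mathrm{label}(e)=1\}=m(\tau'_e)$.

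Finally, summing over the effective edges $e$ at the root partitions the admissible labellings of $\tau$ according to which effective edge carries label $1$, yielding $m(\tau)=\sum_{\tau'\in\mathcal{R}_\e^-(\tau)}m(\tau')$, which is \eqref{Eq: alterntive version factorial}. The main obstacle is the bookkeeping for the $\beta$-pairs: one must check that a pair can head an effective edge only when \emph{both} of its vertices sit at the root (a half-incident pair is neither an effective edge nor admissible as label $1$), and that the regrafting defining $\mathcal{R}_\e^-$ matches exactly the parent-relaxation produced by removing the label-$1$ vertex, so that the labelling bijection is clean. A secondary point to confirm is that $\mathcal{R}_\e^-(\tau)$ is indexed by effective edges \emph{with multiplicity}, so that $\sum_{\tau'\in\mathcal{R}_\e^-(\tau)}$ is genuinely the sum $\sum_e$ over effective edges, exactly as in the example displayed after its definition.
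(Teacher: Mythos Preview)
Your proof is correct and takes a genuinely different route from the paper's. The paper proceeds by induction on $|\tau|$: it writes $m(\tau)=\sum_{\sigma\in\cT_\e^-(\tau)}m(\sigma)$ from the definition of the factorial, applies the inductive hypothesis to each $\sigma$ to get $m(\sigma)=\sum_{\sigma'\in\mathcal{R}_\e^-(\sigma)}m(\sigma')$, and then argues that leaf-removal and effective-root-edge-removal \emph{commute} as operations on exotic trees, so the double sum can be swapped to yield $m(\tau)=\sum_{\tau'\in\mathcal{R}_\e^-(\tau)}m(\tau')$. There is no combinatorial interpretation invoked at all.

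Your approach instead realises $m(\tau)$ as a count of increasing labellings of the effective vertices and then derives the root-removal recursion bijectively by conditioning on which effective vertex carries label $1$. What this buys: the paper's argument is shorter and purely algebraic---once commutation is checked (the key point being that a $\beta$-pair with only one vertex at the root is not an effective edge, so the two operations never interfere), nothing more is needed. Your argument requires more setup but is more transparent combinatorially; it explains \emph{why} the identity holds, namely that leaf-removal and effective-edge-removal peel the natural-growth sequence from opposite ends. The two issues you flagged are real and you handle them correctly: $\mathcal{R}_\e^-(\tau)$ is indeed indexed with multiplicity by effective edges (the paper's example lists two isomorphic trees separately), and an effective vertex labelled $1$ necessarily corresponds to an effective edge since a $\beta$-pair with label $1$ must have both its graph-parents equal to the root. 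Your inverse map is also sound because, for each fixed effective edge $e$, the identification of vertices between $\tau\setminus\{\text{head of }e\}$ and $\tau'_e$ is canonical.
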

\begin{proof}
	We rely once more on an induction procedure over the number of vertices. The only trees in $\cT_{\alpha,\beta}^\e$ with two vertices are
	\begin{equation}\label{Eq: trees proof}
		\tau_1=\begin{tikzpicture}[scale=0.2,baseline=0.1cm]
			\node at (0,0) [dot] (root) {};
			\node at (0,3)  [dotred] (centerll) {};
			\draw[kernel1] (centerll) to
			node [sloped,below] {\small }     (root);
		\end{tikzpicture}\,,\qquad
		\tau_2=
		\begin{tikzpicture}[scale=0.2,baseline=0.1cm]
			\node at (0,0) [dot] (root) {};
			\node at (-1.5,2.5)  [dotblue] (centerll) {\mbox{\small $1$}};
			\node at (1.5,2.5)  [dotblue] (centerrr) {\mbox{\small $1$}};
			\draw[kernel1] (centerll) to
			node [sloped,below] {\small }     (root);
			\draw[kernel1] (centerrr) to
			node [sloped,below] {\small }     (root);
		\end{tikzpicture}\,.
	\end{equation}
	 The identity \eqref{Eq: alterntive version factorial} trivially holds true since $\cT^-_\e(\tau_i)=\mathcal{R}_{\e}^-(\tau_i)=\{\bullet\}$ for $i=1,2$, as the operation defining $\mathcal{R}_{\e}^-(\tau_2)$ treats the edges stemming from a pair of vertices sharing the same exotic decoration as a single one. By the inductive hypothesis we can write
	\begin{align*}
		\frac{|\tau|!}{\tau!}&=\sum_{\tau'\in\cT^-_{\bulletalpha}(\tau)}m_{\bulletalpha}(\tau',\tau)\frac{|\tau'|!}{\tau'!}+\sum_{\tau'\in\cT^-_{\bulletbeta}(\tau)}m_{\bulletbeta}(\tau',\tau)\frac{|\tau'|!}{\tau'!}\\
		&=\sum_{\tau'\in\cT^-_{\bulletalpha}(\tau)}m_{\bulletalpha}(\tau',\tau)\sum_{\tau''\in\mathcal{R}_{\e}^-(\tau')}\frac{|\tau''|!}{\tau''!}+\sum_{\tau'\in\cT^-_{\bulletbeta}(\tau)}m_{\bulletbeta}(\tau',\tau)\sum_{\tau''\in\mathcal{R}_{\e}^-(\tau')}\frac{|\tau''|!}{\tau''!}\,.
	\end{align*}
	We observe that the two operations that define $\cT^-_\e(\tau)$ and $\mathcal{R}_{\e}^-(\tau)$ commute. Indeed, if $\tau'\in\cT^-_\e(\tau)$ was generated by subtracting a leaf attached to the root, then $\tau'\in\mathcal{R}_{\e}^-(\tau)$ as well. If this was not the case, then the sets of edges on which the two operations act do not intersect. Here asking that pairs of $\beta$-vertices that are not both linked to the root do not form effective edges is crucial. 
	As a result
	\begin{align*}
		\frac{|\tau|!}{\tau!}=\sum_{\tau'\in\mathcal{R}_{\e}^-(\tau)}\sum_{\tau''\in\cT^-_{\bulletalpha}(\tau')}m_{\bulletalpha}(\tau'',\tau')\frac{|\tau''|!}{\tau''!}+\sum_{\tau'\in\mathcal{R}_{\e}^-(\tau)}\sum_{\tau''\in\cT^-_{\bulletbeta}(\tau')}m_{\bulletbeta}(\tau'',\tau')\frac{|\tau''|!}{\tau''!}=\sum_{\tau'\in\mathcal{R}_{\e}^-(\tau)}\frac{|\tau'|!}{\tau'!}\,.
	\end{align*}
	This concludes the proof.
\end{proof}
This alternative characterization of the tree factorial for trees lying in $\cT^\e_{\alpha,\beta}$ is of paramount importance to express the realization map as a power of the time parameter. Indeed, a direct inspection of Definition \ref{Def: realization map butcher} suggests that the operation defining $\mathcal{R}_{\e}^-(\tau)$ realizes the action of time derivatives on $\Pi^\e_t(\cdot)$. This observation is at the heart of the following statement. 
\begin{proposition}\label{Prop: explicit form Pi}
	Let $\Pi^\e:\cT_{\alpha,\beta}^\mathfrak{e}\times[0,T]\to\R$ be the realization map of Definition \ref{Def: realization map butcher}. Then, for any $\tau\in \cT_{\alpha,\beta}^\mathfrak{e}$ and $t\in[0,T]$ we have 
	\begin{equation}\label{Eq: explicit expression pi}
		\Pi_t^\mathfrak{e}(\tau)=\frac{|\tau|}{\tau!}t^{|\tau|-1}\,.
	\end{equation}
\end{proposition}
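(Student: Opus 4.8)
The plan is to prove \eqref{Eq: explicit expression pi} by induction on the number of edges $\mathfrak{l}(\tau)$, using the differential structure of the Heaviside kernels for the inductive step. The engine of the argument is the identity, anticipated just before the statement, that the reduction operation $\mathcal{R}_\e^-$ realises differentiation in $t$ at the level of the realization map:
\begin{equation*}
	\frac{d}{dt}\Pi_t^\e(\tau)=\sum_{\tau'\in\mathcal{R}_\e^-(\tau)}\Pi_t^\e(\tau')\,.
\end{equation*}
Granting this, the proposition follows quickly. The base case $\mathfrak{l}(\tau)=0$ is the bare root, where $\Pi_t^\e(\bullet)=1=\tfrac{|\bullet|!}{\bullet!}\tfrac{t^0}{0!}$. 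For the inductive step, every $\tau'\in\mathcal{R}_\e^-(\tau)$ has $\mathfrak{l}(\tau')=\mathfrak{l}(\tau)-1$, so the inductive hypothesis gives $\Pi_t^\e(\tau')=\tfrac{|\tau'|!}{\tau'!}\tfrac{t^{\mathfrak{l}(\tau)-1}}{(\mathfrak{l}(\tau)-1)!}$; summing over $\mathcal{R}_\e^-(\tau)$ and invoking Theorem \ref{Thm: equivalence factorials} turns the factorial sum into $\tfrac{|\tau|!}{\tau!}$, whence $\frac{d}{dt}\Pi_t^\e(\tau)=\tfrac{|\tau|!}{\tau!}\tfrac{t^{\mathfrak{l}(\tau)-1}}{(\mathfrak{l}(\tau)-1)!}$. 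Since $\Pi_0^\e(\tau)=0$ whenever $\mathfrak{l}(\tau)\geq 1$, integrating from $0$ to $t$ yields exactly $\tfrac{|\tau|!}{\tau!}\tfrac{t^{\mathfrak{l}(\tau)}}{\mathfrak{l}(\tau)!}$.

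The whole weight of the proof thus rests on the differentiation identity, and establishing it rigorously is where I expect the main difficulty to lie, precisely because of the ill-defined products $\Theta(t_i-t_j)\delta(t_i-t_j)$ flagged after \eqref{Eq: realization map exotic trees correct}. I would first resolve all Dirac deltas, collapsing each pair of $\beta$-vertices sharing an exotic decoration into a single integration variable; working with mollified kernels and the prescription $\Theta(0)=0$ makes this unambiguous, and it recasts $\Pi_t^\e(\tau)$ as the volume of the polytope in $[0,t]^{\mathfrak{l}(\tau)}$ cut out by the order relations $t_{e_+}>t_{e_-}$ along the edges, a collapsed $\beta$-node being subject to the constraints attached to both of its parents. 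The substitution $t_i=t\,s_i$ already shows $\Pi_t^\e(\tau)=C(\tau)\,t^{\mathfrak{l}(\tau)}$, so $\Pi_t^\e(\tau)$ is polynomial in $t$ with $\Pi_0^\e(\tau)=0$, which legitimises the integration step above. Differentiating this volume in $t$ amounts, by the Leibniz rule applied to the factors $\Theta(t-t_{e_-})$ incident to the root, to summing over the root-incident edges the integral obtained by pinning the corresponding top variable to $t$.

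I would then analyse these boundary contributions edge by edge, and this is the delicate combinatorial point. Pinning the time of an $\alpha$-child $v$ of the root to $t$ turns the children of $v$ into children of the root, producing exactly the tree of $\mathcal{R}_\e^-(\tau)$ obtained by cutting that effective edge; pinning a $\beta$-pair both of whose members sit at the root (where, after collapse, the two edges contribute the single constraint $\Theta(t-t_{\mathrm{pair}})$) likewise produces the element of $\mathcal{R}_\e^-(\tau)$ associated with the corresponding effective double edge. The crucial check is that a $\beta$-pair with only \emph{one} member attached to the root contributes nothing: pinning that variable to $t$ forces, through the delta identification, the other member to satisfy $t<t_u$ for its parent $u$, which is incompatible with $t_u\leq t$ and hence integrates to zero. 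This is precisely why $\mathcal{R}_\e^-$ cuts only effective edges, and it is the heart of matching the analytic derivative with the combinatorial reduction. Finally I would note that if $\tau$ contains a closed path the orientation of the kernels forces $\Pi_t^\e(\tau)=0$, so the identity is understood on closed-path-free trees; these are the only ones produced by the natural growth of Section \ref{Sec: exotic s-series}, hence the only ones appearing in the expansion, and on them the collapse of the deltas is consistent.
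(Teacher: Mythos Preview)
Your proposal is correct and follows essentially the same route as the paper: both argue by induction via the differentiation identity $\frac{d}{dt}\Pi_t^\e(\tau)=\sum_{\tau'\in\mathcal{R}_\e^-(\tau)}\Pi_t^\e(\tau')$, then invoke Theorem~\ref{Thm: equivalence factorials} to collapse the factorial sum and integrate from the vanishing initial condition. The paper inducts on $|\tau|_\e$ with base cases $\tau_1,\tau_2$ while you induct on $\mathfrak{l}(\tau)$ starting from the bare root, but this is cosmetic; your treatment of the derivative identity (collapsing the deltas, analysing effective versus non-effective root edges, and the closed-path vanishing) is more detailed than the paper's brief justification but reaches the same conclusion.
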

\begin{proof}
	We shall consider the action of the time derivative on the realization map. By definition, see \eqref{Eq: realization map exotic trees correct}, the only time dependence is carried by the kernels representing edges that connect the root to the rest of the tree. Since the weak derivative of the Heaviside Theta is a Dirac delta, this operation boils down to identifying the vertices at the extrema of the edge hit by the derivative. Observe that, if the time derivative falls on a non-effective edge, this identification forms a closed path. Then, the oriented nature of $\Theta$ forces such contribution to vanish. Hence, at a graphical level, taking the derivative of the realization map coincides with the operation that defines the set $\mathcal{R}_{\e}^-$. Hence
	\begin{equation}\label{Eq: derivative pi}
		\frac{d}{dt}\Pi^\e_t(\tau)=\sum_{\tau'\in \mathcal{R}_{\e}^-(\tau)}\Pi^\e_t(\tau')\,,
	\end{equation}
	At the same time, by deriving the right-hand side of \eqref{Prop: explicit form Pi} we have
	\begin{equation}\label{Eq: derivative non pi}
		\frac{d}{dt}\left(\frac{|\tau|}{\tau!}t^{|\tau|-1}\right)=\frac{|\tau|(|\tau|-1)}{\tau!}t^{|\tau|-2}
	\end{equation}
	We will use this information in the induction on the number of vertices to prove \eqref{Eq: explicit expression pi}. A direct inspection show that the latter trivially holds true for $\tau_1$ and $\tau_2$ as in \eqref{Eq: trees proof}. Now assume that \eqref{Eq: explicit expression pi} is satisfied by any tree with at most $n$ vertices and we consider $\tau\in\cT^\e_{\alpha,\beta}$ such that $|\tau|=n+1$.
	From the equivalence shown in Theorem \ref{Thm: equivalence factorials} and by exploiting the induction hypothesis on $\tau'\in\mathcal{R}_{\e}^-(\tau)$, we have
	\begin{align*}
		\sum_{\tau'\in \mathcal{R}_{\e}^-(\tau)}\Pi^\e_t(\tau')=\sum_{\tau'\in \mathcal{R}_{\e}^-(\tau)}\frac{|\tau'|}{\tau'!}t^{|\tau'|-1}=\Bigl(\sum_{\tau'\in \mathcal{R}_{\e}^-(\tau)}\frac{|\tau'|!}{\tau'!}\Bigr)\frac{t^{|\tau|-2}}{(|\tau|-2)!}=\frac{|\tau|!}{\tau!}\frac{t^{|\tau|-2}}{(|\tau|-2)!}\,.
	\end{align*}
	As a result
	\begin{equation*}
		\frac{d}{dt}\left(\Pi^\e_t(\tau)-\frac{|\tau|!}{\tau!}\frac{t^{\mathfrak{l}(\tau)}}{\mathfrak{l}(\tau)!}\right)=0\,.
	\end{equation*}
	Since the solution to this differential equation must have vanishing initial condition at $t=0$, we conclude that
	\begin{equation*}
		\Pi^\e_t(\tau)=\frac{|\tau|!}{\tau!}t^{|\tau|-1}\,.
	\end{equation*}
\end{proof}

This result is in accordance with the standard representation of iterated integrals via rooted trees, as showed in \cite{gubinelli2010ramification} for an equation driven by generic paths. Indeed, \eqref{Eq: explicit expression pi} should be read as
\begin{equation*}
	\Pi^\e_t(\tau)=\frac{t^{|B_-(\tau)|}}{B_-(\tau)!}\,,
\end{equation*} 
where $B_-(\tau)$ coincides with the exotic forest obtained by cutting the edges entering the root. This is in accordance with the ancillary role played by the root in our setting. 

\begin{remark}
	Proving \eqref{Eq: explicit expression pi} without passing through the equivalent definition of tree factorial of Theorem \ref{Thm: equivalence factorials} is a daunting task. A major difficulty resides in the process of constructing trees lying in $\cT_{\alpha,\beta}^\mathfrak{e}$, which does not boil down to products of trees and convolutions with a kernel, at the heart of the algebraic integration adopted in \emph{e.g.} rough paths and regularity structures. To wit, the nonlocal features of exotic decorations produce trees excluded from the codomain of such simple operations. 
\end{remark}

\begin{corollary}\label{Cor: key corollary}
		Let $\alpha,\beta,f\in C^\infty(\R)$ be real analytic functions such that  \eqref{Eq: SDE} admits a unique solution $u$  with initial condition $u_0\in\R$ on $[0,T]$. Then
	\begin{equation}\label{Eq: exact expansion expectation Pi}
		\mathbb{E}^{u_0}[f(u_t)]=\sum_{\tau\in\cT^\e_{\alpha,\beta}}\frac{\Upsilon_{u_0}^{\alpha,\beta,f}(\tau)}{\sigma_\mathfrak{e}(\tau)}\Pi^\e_t(\tau)\,,
	\end{equation}
	as a formal series.
\end{corollary}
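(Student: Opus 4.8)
The plan is to combine the two main results already established in this section: the explicit exotic B-series representation of the Feller semigroup from Theorem \ref{Thm: main result S-series}, in its convergent form \eqref{Eq: convergent exact expansion expectation} (valid precisely under the analyticity assumptions of the corollary), together with the closed-form evaluation of the realization map obtained in Proposition \ref{Prop: explicit form Pi}. No further analytic input is required: the corollary amounts to a purely algebraic rearrangement of the coefficients of the series, so I expect the argument to be short, all the genuine difficulty having been absorbed into Proposition \ref{Prop: explicit form Pi}.

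First I would recall the expansion \eqref{Eq: convergent exact expansion expectation},
\begin{equation*}
	\mathbb{E}^{u_0}[f(u_t)]=\sum_{\tau\in\cT_{\alpha,\beta}^\mathfrak{e}}\frac{|\tau|}{\sigma_\mathfrak{e}(\tau)\tau!}\Upsilon_{u_0}^{\alpha,\beta,f}(\tau)\,t^{\mathfrak{l}(\tau)}\,,
\end{equation*}
and use the relation $|\tau|=\mathfrak{l}(\tau)+1$ between the exotic grading and the edge count, which is immediate from Definition \ref{Def: rxotic coloured trees}. This identity gives $|\tau|=|\tau|!/\mathfrak{l}(\tau)!$, so that the scalar prefactor of each summand rewrites as
\begin{equation*}
	\frac{|\tau|}{\tau!}\,t^{\mathfrak{l}(\tau)}=\frac{|\tau|!}{\tau!\,\mathfrak{l}(\tau)!}\,t^{\mathfrak{l}(\tau)}=\frac{|\tau|!}{\tau!}\frac{t^{\mathfrak{l}(\tau)}}{\mathfrak{l}(\tau)!}\,.
\end{equation*}

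The key step is then to recognize the right-hand side as the realization map: by Proposition \ref{Prop: explicit form Pi} one has exactly $\Pi^\e_t(\tau)=\tfrac{|\tau|!}{\tau!}\tfrac{t^{\mathfrak{l}(\tau)}}{\mathfrak{l}(\tau)!}$. Substituting this into each term of \eqref{Eq: convergent exact expansion expectation} yields
\begin{equation*}
	\frac{|\tau|}{\sigma_\mathfrak{e}(\tau)\tau!}\Upsilon_{u_0}^{\alpha,\beta,f}(\tau)\,t^{\mathfrak{l}(\tau)}=\frac{\Upsilon_{u_0}^{\alpha,\beta,f}(\tau)}{\sigma_\mathfrak{e}(\tau)}\Pi^\e_t(\tau)\,,
\end{equation*}
and summing over $\tau\in\cT^\e_{\alpha,\beta}$ gives the claimed identity \eqref{Eq: exact expansion expectation Pi}. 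Since everything is understood at the level of formal power series in $t$, the convergence caveats being exactly those discussed in Section \ref{Sec: asymptotic}, there is no analytic obstacle; the only point demanding care is the bookkeeping of the two gradings $|\tau|$ and $\mathfrak{l}(\tau)$, which is settled once and for all by Definition \ref{Def: rxotic coloured trees}. The substantive work has already been carried out in Proposition \ref{Prop: explicit form Pi}, whose proof in turn rests on the alternative characterization of the tree factorial in Theorem \ref{Thm: equivalence factorials}.
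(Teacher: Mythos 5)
Your proof is correct and follows exactly the route the paper takes: its proof of Corollary \ref{Cor: key corollary} is the one-line observation that \eqref{Eq: convergent exact expansion expectation} combined with Proposition \ref{Prop: explicit form Pi} gives the claim, and you have merely spelled out the (correct) bookkeeping $\mathfrak{l}(\tau)=|\tau|-1$ that turns $\tfrac{|\tau|}{\tau!}t^{\mathfrak{l}(\tau)}$ into $\tfrac{|\tau|!}{\tau!}\tfrac{t^{\mathfrak{l}(\tau)}}{\mathfrak{l}(\tau)!}=\Pi^\e_t(\tau)$. Nothing further is needed.
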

\begin{proof}
	The statement follows from combining the formal exotic B-series expansion in \eqref{Eq: convergent exact expansion expectation} and Proposition \ref{Prop: explicit form Pi}. 
\end{proof}

\section{MSR expectations via Feynman multi-indices}\label{Sec: MSR expectations via Feynman multi-indices}


As underlined in the introduction, the perturbative expansion of MSR expectations heavily relies on the interpretation of the exponential of the free action as a Gaussian measure on a space of paths $\mathcal{C}\oplus\tilde{\mathcal{C}}=H^1([0,T])\oplus H^1([0,T])$ with quadratic form induced by the operator $T:H^1([0,T])\oplus H^1([0,T])\to L^2([0,T])\oplus L^2([0,T])$ defined in \eqref{Eq: free part MSR}. Then $T^{-1}$ is usually interpreted as the covariance matrix of the Gaussian measure $(\psi,\tilde{\psi})$, whose non-diagonal structure suggests the properties \eqref{Eq: properties random fields}. Yet, even if  $T^{-1}$ is symmetric on the subset of $H^1([0,T])\oplus H^1([0,T])$ of paths $\tilde{\psi}$ such that $\tilde{\psi}(0)=\tilde{\psi}(T)=0$, the requirement of being non-negative cannot be satisfied, preventing us from interpreting $e^{-\langle(\psi,\tilde{\psi}), T(\psi,\tilde{\psi})\rangle}\mathcal{D}\psi\mathcal{D}\tilde{\psi}$ as a Gaussian measure. Indeed , if $\int_{0}^T\tilde{\psi}_t\dot{\psi}_t\,dt> 0$ for a pair $(\psi,\tilde{\psi})\in\mathcal{C}\oplus\tilde{\mathcal{C}}$, then $\int_{0}^T\tilde{\psi}'_t\dot{\psi}'_t\,dt< 0$ for $(\psi',\tilde{\psi}')=(\psi,-\tilde{\psi})\in\mathcal{C}\oplus\tilde{\mathcal{C}}$.
This remark invalidates the usual, heuristic derivation of path integral expectations in the MSR formalism. Nevertheless, explicit computations in manifold application of this framework have proven efficient in reproducing the correct statistics of the diffusion process under scrutiny. A posteriori we will show that this improper assumption on the construction of the MSR expectations yields the correct result.

We shall adopt the functional point of view on perturbative algebraic quantum field theory \cite{rejzner2016perturbative, hawkins2020star}, in which additional information from the model are implemented at the algebraic level via a deformation of the product structure of suitable spaces of functionals over smooth configurations. 
As advocated in \cite{bonicelli2023algebraic}, the problem of constructing MSR expectations via maximal contractions of the auxiliary fields can be rigorously tackled by resorting to a formulation based on distribution-valued functionals. Since our ultimate goal is just to rigorously define a map that implements at the analytic level the contraction of vertices to form Feynman diagrams, we present directly the form of the MSR expectations in term of distribution-valued functionals and contraction maps. For the sake of readability we collect all definitions in Appendix \ref{Sec: deformation maps}, as they are not crucial to understand the rest of the paper. 

The MSR action \eqref{Eq: action} can be represented as a distribution-valued functional lying in $\mathcal{F}_{loc}(\R,\mathsf{T}')$:
\begin{equation}\label{Eq: functional interacting vertex}
	S_{int}[\psi,\psi]\quad\mapsto\quad \mathsf{S}(\psi,\tilde{\psi}):= (\tilde{\Psi}\alpha(\Psi)-\theta_0\alpha^{(1)}(\Psi)+\frac{1}{2}\tilde{\Psi}^2\beta^2(\Psi)-\theta_0\tilde{\Psi}\beta(\Psi)\beta^{(1)}(\Psi))(\psi,\tilde{\psi})\,,
\end{equation}
where $\Psi,\tilde{\Psi}$ are the field functionals defined in \eqref{Eq: field functionals}. In particular, defining $\chi(t):=\Theta(t)-\Theta(t-T)$, 
\begin{equation*}
	S_{int}[\psi,\psi]:=\langle \mathsf{S}(\psi,\tilde{\psi}),\chi\rangle\,.
\end{equation*}
Notice that, even if $\chi$ is not a test function lying in $\mathcal{D}(\R)$, the pairing in \eqref{Eq: functional interacting vertex} is well defined. Then the MSR expectation of a function $f\in C^\infty(\R)$ of the diffusion process solving \eqref{Eq: SDE} is defined as the formal perturbative series
\begin{align*}
	\mathbb{E}^{\smallMSR}[f(u_t)]:=\Gamma_\Theta\left[f(\Psi)\cdot e^{\mathsf{S}_{int}(\Psi,\Psi)}\right](\delta_t\otimes \chi)\Big\vert_{\substack{\psi=u_0\\\tilde{\psi}=0}}
\end{align*}
The action of $\Gamma_\Theta$ amounts to contracting in all possible ways an increasing number of pairs $(\Psi,\tilde{\Psi})$ into an instance of the integral kernel of the Heaviside theta. 
For notational ease we shall write distribution-valued functionals in terms of their integral kernels. Accordingly, the latter expression can be rewritten as
\begin{align}
	\mathbb{E}^{\smallMSR}[f(u_t)]&=\Gamma_\Theta\left[f(\psi(t))\cdot e^{\langle \tilde{\psi}\alpha(\psi)-\theta_0\alpha^{(1)}(\psi)+\frac{1}{2}\tilde{\psi}^2\beta^2(\psi)-\theta_0\tilde{\psi}\beta(\psi)\beta^{(1)}(\psi),\chi\rangle}\right]\Big\vert_{\substack{\psi=u_0\\\tilde{\psi}=0}}\nonumber\\
	&=\Gamma_{\Theta}\left[f(\psi(t))\cdot e^{\langle\Gamma_{\Theta}^{-1}(\widetilde{\psi}\alpha(\psi))+\frac{1}{2} \widetilde{\psi}^2\beta^2(\psi)-\theta_0 \widetilde{\psi}\beta\beta^{(1)}(\psi),\chi\rangle}\right]\Big\vert_{\substack{\psi=u_0\\\tilde{\psi}=0}}\,\label{Eq: fist step MSR}
\end{align}
where we used the identity $\Gamma_{\Theta}^{-1}(\tilde{\psi}\alpha(\psi))=\Gamma_{-\Theta}(\tilde{\psi}\alpha(\psi))=\tilde{\psi}\alpha(\psi)-\Theta(0)\alpha^{(1)}(\psi)$ and the identification $\Theta(0)=\theta_0$. Resorting to the nonlocal version of the contraction map
\begin{equation*}
	\Gamma_{\Theta}^{mloc}:\mathcal{F}_{mloc}(\R,\mathsf{T}')\,\hat{\otimes}\,\mathcal{F}_{mloc}(\R,\mathsf{T}') \rightarrow\mathcal{F}_{mloc}(\R,\mathsf{T}')\,,
\end{equation*} 
defined in \eqref{Eq: nonlocal gamma}, Equation \eqref{Eq: fist step MSR} takes the form 
\begin{align*}
	\mathbb{E}^{\smallMSR}[f(u_t)]&=\Gamma^{mloc}_{\Theta}\left[\Gamma_\Theta (f(\psi(t)))\,\hat{\otimes}\, e_{\hat{\otimes}}^{\langle\Gamma_\Theta\Gamma_{\Theta}^{-1}(\widetilde{\psi}\alpha(\psi))+\frac{1}{2} \Gamma_\Theta\widetilde{\psi}^2\beta^2(\psi)-\theta_0 \Gamma_\Theta\widetilde{\psi}\beta\beta^{(1)}(\psi),\chi\rangle}\right]\Big\vert_{\substack{\psi=u_0\\\tilde{\psi}=0}}\\
	&=\Gamma^{mloc}_{\Theta}\left[f(\psi(t))\,\hat{\otimes}\, e_{\hat{\otimes}}^{\langle\widetilde{\psi}\alpha(\psi)+\frac{1}{2} \Gamma_\Theta\widetilde{\psi}^2\beta^2(\psi)-\theta_0 \Gamma_\Theta\widetilde{\psi}\beta\beta^{(1)}(\psi),\chi\rangle}\right]\Big\vert_{\substack{\psi=u_0\\\tilde{\psi}=0}}\,,
\end{align*}
where $e_{\hat{\otimes}}$ denotes the exponential of functional-valued distributions with respect to the tensor product $\hat{\otimes}$, see Appendix \ref{Sec: deformation maps}.
On account of the identification of $\Theta(0)$ with $\theta_0$, an explicit calculation shows that
\begin{align*}
	\Gamma_\Theta\left(\frac{1}{2}\tilde{\psi}^2\beta^2(\psi)-\theta_0\tilde{\psi}\beta\beta^{(1)}(\psi)\right)(t)=\frac{1}{2}\tilde{\psi}^2\beta^2(\psi)(t)+\theta_0\tilde{\psi}\beta\beta^{(1)}(\psi)(t)\,.
\end{align*}
Hence
\begin{align}
	\mathbb{E}^{\smallMSR}[f(u_t)]&=\Gamma_{\Theta}^{mloc}\left[f(\psi(t))\,\hat{\otimes}\, e_{\hat{\otimes}}^{\langle \widetilde{\psi}\alpha(\psi)+\frac{1}{2} \widetilde{\psi}^2\beta^2(\psi)+\theta_0 \widetilde{\psi}\beta\beta^{(1)}(\psi),\chi\rangle}\right]\Big\vert_{\substack{\psi=u_0\\\tilde{\psi}=0}}\,.
\end{align}
Most notably, this expression involves uniquely the multilocal contraction map. This means that no contractions between pairs of fields $(\psi,\tilde{\psi})$ belonging to the same interacting vertex $\tilde{\psi}\alpha(\psi)$ or $\tilde{\psi}^2\beta^2(\psi)$ will appear in the calculations, preventing loops from appearing in a diagrammatic representation of the perturbative expansion. Yet there is a residue of the arbitrariness in extending $\Theta$ of the form $\theta_0\beta\beta^{(1)}$. Since we are interested in studying the expansion of expectation for a solution of \eqref{Eq: SDE} in the sense of It\^o calculus, we shall set $\theta_0=0$. Hence we are left with the expression
\begin{equation}\label{Eq: final formula MSR}
	\mathbb{E}^{\smallMSR}[f(u_t)]=\Gamma_{\Theta}^{mloc}\left[f(\psi(t))\,\hat{\otimes}\, e_{\hat{\otimes}}^{\langle \widetilde{\psi}\alpha(\psi)+\frac{1}{2} \widetilde{\psi}^2\beta^2(\psi),\chi\rangle}\right]\Big\vert_{\substack{\psi=u_0\\\tilde{\psi}=0}}\,.
\end{equation}

\subsection{MSR Feynman diagrams}\label{Sec: MSR Feynman diagrams}
The idea at the heart of the perturbative expansion of path integral expectations as per \eqref{Eq: final formula MSR} is to Taylor expand the interaction around the initial condition $u_0\in\R$ in powers of $\tilde{\psi}$ and $\psi-u_0$ and to contract nonlocal pairs of fields in all possible ways. A key difference with respect to usual applications in Euclidean field theory is that, in the present framework, the Lagrangian density is not a combination of polynomials of the fields. Indeed, for the sake of generality we consider the case of an SDE with smooth coefficients $\alpha,\beta\in C^\infty(\R)$ so that, loosely speaking, the MSR action \eqref{Eq: action} involves an infinite number of Taylor polynomials coming from the formal expansion of $\alpha$ and $\beta^2$. The contraction operation codified by $\Gamma_\Theta$ has a natural graphical representation: following the one briefly explained in the introduction, we can think of $\psi$ and $\tilde{\psi}$ as free legs of different nature. Hence a contraction amounts to linking the free legs representing $\psi$ and $\tilde{\psi}$, respectively into an edge. This observation prompts the definition of \emph{MSR Feynman diagrams} as connected, oriented graphs $\Gamma=(\mathcal{E}(\Gamma),\mathcal{V}(\Gamma))$ identified with a set of oriented edges $\mathcal{E}(\Gamma)$ and of vertices $\mathcal{V}(\Gamma)$ that abide by the following rules
\begin{enumerate}
	\item There is a single vertex with no outgoing edges, that we call the root. 
	\item Each vertex different from the root can have either one or two outgoing edges and no restriction on the number of incoming edges.  
	\item \label{Item: closed loops} There are no oriented closed paths, \emph{i.e.}, connected collection of edges that, when followed along their orientation, start and end at the same vertex.
\end{enumerate}
We denote by $\mathcal{F}_{\smallMSR}$ the space of MSR Feynman diagrams. Examples of diagrams abiding by these rules are
\begin{equation*}
	\begin{tikzpicture}[scale=0.2,baseline=0.1cm]
		\node at (0,0) [dot,label=right:$\rho$] (root) {};
		\node at (0,3) [dot] (center) {};
		\node at (0,6)  [dot] (centerc) {};
		\node at (-2,5)  [dot] (centerl) {};
		\node at (2,5)  [dot] (centerr) {};
		\draw[kernel1] (center) to
		node [sloped,below] {\small }     (root);
		\draw[kernel1] (centerc) to
		node [sloped,below] {\small }     (center);
		\draw[kernel1] (centerr) to
		node [sloped,below] {\small }     (center);
		\draw[kernel1] (centerl) to
		node [sloped,below] {\small }     (center);
	\end{tikzpicture}\hspace{2cm}
	\begin{tikzpicture}[scale=0.2,baseline=0.1cm]
		\node at (0,0) [dot, label=right:$\rho$] (root) {};
		\node at (-2,3) [dot] (centerl) {};
		\node at (+2,3) [dot] (centerr) {};
		\node at (0,6)  [dot] (centerll) {};
		\draw[kernel1] (centerl) to
		node [sloped,below] {\small }     (root);
		\draw[kernel1] (centerr) to
		node [sloped,below] {\small }     (root);
		\draw[kernel1] (centerll) to
		node [sloped,below] {\small }     (centerr);
		\draw[kernel1] (centerll) to
		node [sloped,below] {\small }     (centerl);
	\end{tikzpicture}\hspace{2cm}
	\begin{tikzpicture}[scale=0.2,baseline=0.1cm]
		\node at (0,0) [dot, label=right:$\rho$] (root) {};
		\node at (-3,3) [dot] (centerl) {};
		\node at (0,3) [dot] (centerc) {};
		\node at (3,3) [dot] (centerr) {};
		
		\node at (0,6)  [dot] (centerrr) {};
		\node at (-2,9) [dot] (lastl) {};
		\node at (2,9) [dot] (lastr) {};
		\draw[kernel1] (centerl) to
		node [sloped,below] {\small }     (root);
		\draw[kernel1] (centerr) to
		node [sloped,below] {\small }     (root);
		\draw[kernel1] (centerc) to
		node [sloped,below] {\small }     (root);
		\draw[kernel1] (centerrr) to
		node [sloped,below] {\small }     (centerc);
		\draw[kernel1] (centerrr) to
		node [sloped,below] {\small }     (centerl);
		\draw[kernel1] (lastl) to
		node [sloped,below] {\small }     (centerrr);
		\draw[kernel1] (lastr) to
		node [sloped,below] {\small }     (centerrr);
	\end{tikzpicture}
\end{equation*}
while Item \ref{Item: closed loops} rules out diagrams of the form
\begin{equation*}
	\begin{tikzpicture}[scale=0.2,baseline=0.1cm]
		\node at (0,0) [dot, label=right:$\rho$] (root) {};
		\node at (-2,3) [dot] (centerl) {};
		\node at (+2,3) [dot] (centerr) {};
		\node at (0,6)  [dot] (centerll) {};
		\draw[kernel1] (root) to
		node [sloped,below] {\small }     (centerl);
		\draw[kernel1] (centerr) to
		node [sloped,below] {\small }     (root);
		\draw[kernel1] (centerll) to
		node [sloped,below] {\small }     (centerr);
		\draw[kernel1] (centerl) to
		node [sloped,below] {\small }     (centerll);
	\end{tikzpicture}\hspace{2cm}
	\begin{tikzpicture}[scale=0.2,baseline=0.1cm]
		\node at (0,0) [dot,label=right:$\rho$] (root) {};
		\node at (0,3) [dot] (center) {};
		\node at (2,5)  [dot] (centerr) {};
		\draw[kernel1] (center) to
		node [sloped,below] {\small }     (root);
		\draw[kernel1] (centerr) to
		node [sloped,below] {\small }     (center);
		\draw[kernel1, loop, min distance=3cm, radius=1cm] (center) to [out=90, in=180] (center);
	\end{tikzpicture}
\end{equation*}
According to the Feynman rules, each MSR Feynman diagram represents an integral involving the kernel of the propagator \emph{i.e.}, the Green's function of the differential operator ruling the linear dynamics of the field theory. In this case the latter is the Heaviside step function and the integral associated to the diagram reads
\begin{equation*}
	\Pi_t(\Gamma)=\int_{[0,T]^{|\mathcal{V}|-1}}\prod_{\substack{e\in\mathcal{E}(\Gamma)\\e_+=\rho}}\Theta(t-s_{e-})\prod_{\substack{e\in\mathcal{E}(\Gamma)\\e_+\neq\rho}}\Theta(s_{e_+}-s_{e_-})\,ds\,,\hspace{2cm}\Gamma\in\mathcal{F}_{\smallMSR}\,,
\end{equation*} 
where $e=(e_+,e_-)$ and the edge points towards $e_+$, $\rho$ is the root of $\Gamma$ and $|\mathcal{V}|$ denotes the cardinality of the set of vertices. Integration is over all variables associated to internal vertices. Given the compact nature of the domain of integration, no test function at the root is needed, as it would be the case in general, see \emph{e.g} \cite[\S 2]{hairer2016analyst}. The motivation behind our choice of a rather specific class of Feynman diagram is dictated by the procedure outlined in the introduction for building all Feynman diagrams that contribute to the perturbative expansion of \eqref{Eq: perturbative expectation action}. Indeed, $\mathbb{E}^{\smallMSR}[f(u_t)]$ is a formal series whose coefficients are the contributions coming from contractions of monomials of the form $\tilde{\psi}\psi^n$ and $\tilde{\psi^2}\psi^{k_1+k_2}$ with $f(\psi)$ via the map $\Gamma_\Theta$, see \eqref{Eq: final formula MSR}. Observe that integration is over the interval $[0,T]$ due to the specific form of the cut-off function $\chi$.

\subsection{Feynman multi-indices}\label{Sec: Feynman multi-indices}
Equation \eqref{Eq: final formula MSR} formalized the idea that MSR expectations are obtained by maximally contracting pairs of auxiliary fields $(\psi,\tilde{\psi})$ generated by the perturbative expansion of the MSR measure around the free theory. Then, the decomposition in terms of MSR Feynman diagrams presented in the previous section follows. In this work we adopt the point of view of \cite{bruned2025renormalising} on the construction and BPHZ renormalization of the cumulants generating functional for a scalar quantum field theory based on multi-indices. This change of paradigm amounts to switching from Feynman diagrams to pre-Feynman diagrams, intended as collections of vertices with free legs that can form proper Feynman diagrams after a pairing procedure analogous to the one presented above.
In the present section we show how multi-indices are well-suited for deriving an explicit expansion of the MSR expectations, see \eqref{Eq: final formula MSR}.

The problem under scrutiny is characterized by an action involving two generic nonlinearities of the form $\tilde{\psi}\alpha(\psi)$ and $\frac{1}{2}\tilde{\psi}^2\beta^2(\psi)$ for $\alpha,\beta\in C^\infty(\R)$. This is rather peculiar in comparison to standard scalar (or complex) field theories since the term involving the square of the diffusivity complicates the combinatorial aspects of the problem. To wit, aiming at a comprehensive treatment of all nonlinearities at once, care must be taken in adapting standard techniques for studying path integrals in terms of Feynman diagrams.
The stark difference between the types of nonlinearities suggests to introduce coloured multi-indices, as \emph{e.g.} in \cite{chandra2024flowapproachgeneralizedkpz}. 
\begin{definition}\label{Def: Feynman multi-indices}
	A \emph{Feynman multi-index} is a map
	\begin{align*}
		\gamma:&\,\N\times \N^2_{\leq}\rightarrow \N\times \N\times \{0,1\}\\
		&(n,\k)\mapsto\gamma(n,\k):=(\gamma_{\alpha}(n),\gamma_\beta(\k),\gamma_\bullet(n))\,,
	\end{align*}
	with compact support, that is $\gamma(n,\k)=0$ for all but finitely many $(n,\k)\in \N\times\N^2$, and such that $\gamma_\bullet(n)=\un{(n=m)}$ for a fixed $m\in\N$. We denote by $\mathcal{M}_{\smallF}$ the linear space of Feynman multi-indices endowed with the natural notion of sum
	\begin{equation*}
		(\gamma+\gamma')(n,\k):=\gamma(n,\k)+\gamma'(n,\k)\,,\qquad \gamma,\gamma'\in\mathcal{M}_{\smallF},n\in\N,\k\in\N^2_{\leq}\,.
	\end{equation*} 
\end{definition}
Given abstract variables $\{z_{\alpha,n},z_{\bullet,n},z_{\beta,\k}\}_{n\in\N,\k\in\N^2}$, a multi-index uniquely identifies a monomial of the form
\begin{equation*}
	z^\gamma:=\prod_{n\in\N,\k\in\N^2}z_{\alpha,n}^{\gamma_{\alpha}(n)}z_{\beta,\k}^{\gamma_\beta(\k)}z_{\bullet,n}^{\gamma_\bullet(n)}\,.
\end{equation*}
With a slight abuse of notation, throughout the paper we will identify monomials with multi-indices and alternate between the two notations to facilitate the reader. The space of abstract monomials forms a commutative algebra when endowed with the product
\begin{equation}\label{Eq: pointwise product multi}
	z^\gamma z^{\gamma'}=z^{\gamma+\gamma'}\,\qquad\gamma,\gamma'\in\mathcal{M}_{\smallF}\,.
\end{equation}
For later convenience, we assign to multi-indices the following gradings: 
\begin{subequations}
	\begin{align}
		&[\gamma]=\sum_{n\in \N}(\gamma_{\alpha}(n)+\gamma_{\bullet}(n))+\sum_{\k\in \N^2_\leq}\gamma_{\beta}(\k)\,,\label{Eq: length multi-index}\\
		&|\gamma|:=\sum_{n\in \N}n(\gamma_{\alpha}(n)+\gamma_\bullet(n))+\sum_{k\in \N^2_\leq}(k_1+k_2)\gamma_{\beta}(\k)\,,\label{Eq: arity}\\
		&\llbracket\gamma\rrbracket:=\sum_{n\in\N}\gamma_\alpha(n)+\sum_{\k\in\N^2_{\leq}}2\gamma_\beta(\k)\,,\label{Eq: number snaky edges}
	\end{align}
\end{subequations}
as well as their restriction to a single component of the multi-index:
\begin{align}
	&[\gamma]_\eta=\sum_{n\in \N}\gamma_{\eta}(n)\,\qquad[\gamma]_\beta=\sum_{\k\in \N^2_\leq}\gamma_{\beta}(\k)\,,\qquad \eta\in\{\alpha,\bullet\}\,.\label{Eq: length multi-index 2}
\end{align}
These gradings have an interpretation in relation to the graphical representation of Feynman diagrams. Recalling that multi-indices codify the number of vertices with a given fertility, we regard $[\gamma]$ as the length of the multi-index, which counts the total number of vertices in the associated pre-Feynman diagram. The integer $|\gamma|$ accounts for the total number of free legs associated to the field $\psi$, while $\llbracket\gamma\rrbracket$ counts the total number of legs representing $\tilde{\psi}$.

Given the interpretation of the multi-index components as representing the number of vertices of a given type having fixed number of outgoing legs, one would be tempted to righteously consider $\beta^2(u)$ as the square of $\beta$ and then to resort to a single-component multi-index as for $\alpha$. Yet, our choice of treating $\beta^2$ as a different object is crucial and motivated by the problem of performing contractions, where the \emph{local nature} of $\beta^2$ plays a role. The symmetry of the square justifies our choice of restricting to triangular $\gamma_\beta$:
\begin{equation}
	\gamma_\beta(k_1,k_2)\neq 0 \quad\Leftrightarrow\quad k_1\leq k_2\,.
\end{equation} 
Observe that $\mathcal{M}_{\smallF}$ encompasses multi-indices which cannot give rise to Feynman diagrams after a leg contraction procedure on the vertices that they represent. To rule them out, we resort once more to the graphical interpretation of Feynman multi-indices, which yields a crucial constraint: The evaluation of the expression in \eqref{Eq: final formula MSR} at the field configurations $\psi=u_0,\tilde{\psi}=0$ together with the recentering at $u_0$ due to the Taylor expansion of the coefficients, prevents free legs from surviving after the contraction procedure that generate Feynman diagrams. This observation, together with the interpretation of the gradings defined above, leads to the following natural sub-class of Feynman multi-indices. 
\begin{definition}\label{Def: population}
	A multi-index $\gamma\in\mathcal{M}_{\smallF}$ is said to be \emph{populated} if
	\begin{equation*}
		\llbracket\gamma\rrbracket=|\gamma|\,.
	\end{equation*}
	We denote by $\mathcal{M}_{\smallF}^p\subset \mathcal{M}_{\smallF}$ the sub-space of populated Feynman multi-indices.
\end{definition}
The repeated action of the differential operator $D_\Theta$ defining the contraction map naturally produces derivatives of the drift and diffusion, as well as of the test-function $f$, evaluated at the initial condition $u_0$. In much the same spirit of Section \ref{Sec: Exotic B-series}, each multi-index $\gamma\in\mathcal{M}_{\smallF}$ uniquely identifies an \emph{elementary differential}.
\begin{definition}\label{Def: elementary differential feynman}
	Let $\alpha,\beta,f\in C^\infty(\R)$ and $u_0\in\R$. The elementary differential $\Upsilon_{u_0}^{\alpha,\beta,f}:\mathcal{M}_{\smallF}\rightarrow\R$ acts on $z^\gamma\in\mathcal{M}_{\smallF}$ as
	\begin{equation}\label{Eq: map multi-elementary}
		\Upsilon_{u_0}^{\alpha,\beta,f}(z^\gamma):=\prod_{n\in\N}  (\alpha^{(n)}(u_0))^{\gamma_\alpha (n)}(f^{(n)}(u_0))^{\gamma_\bullet(n)} 
		\prod_{\k=(k_1,k_2)\in\N^2_\leq}(\beta^{(k_1)}(u_0)\beta^{(k_2)}(u_0))^{\gamma_\beta(\k)}\,.
	\end{equation}
\end{definition}
Observe that the expression in \eqref{Eq: map multi-elementary} is well-defined as it involves a finite number of contributions thanks to the finite support assumption on multi-indices lying in   $\mathcal{M}_{\smallF}$.

Another key ingredient is the notion of symmetry factor of the objects used to label the contributions to the expansion. A reduced notion of symmetry factor for multi-indices was introduced in \cite{bruned2025multi}. In this setting, we adapt the one proposed in \cite{bruned2025renormalising}.
\begin{definition}\label{Def: symmetry factor Feynman multi}
	Let $\gamma\in\mathcal{M}_{\smallF}$. We denote by $\sigma_{\smallF}(z^\gamma)$ its symmetry factor, defined as
	\begin{equation}\label{Eq: symmetry factor Feynman multi}
		\sigma_{\smallF}(z^\gamma):=\prod_{n\in\N}\gamma_\alpha(n)!(n!)^{\gamma_\alpha(n)+\gamma_\bullet(n)}\prod_{\k\in\N^2_\leq}\gamma_\beta(\k)!(\k!)^{\gamma_\beta(\k)}2^{\sum_{i\in\N}\gamma_\beta(i,i)}\,.
	\end{equation}
\end{definition}
The expression of $\sigma_{\smallF}(z^\gamma)$ in Definition \ref{Def: symmetry factor Feynman multi} is motivated as follows: 
	\begin{itemize}
		\item For any $n\in\N,\k\in\N^2_\leq$, the terms $\gamma_\alpha(n)!$ and $\gamma_{\beta}(\k)!$ coincide with the number of permutation of vertices of the same type and same fertility, which are considered indistinguishable within the pre-Feynman diagram.  Indeed, any of these permutations leaves the induced Feynman diagrams unchanged.  
		\item The terms $(n!)^{\gamma_\alpha(n)+\gamma_\bullet(n)}$ and $(\k!)^{\gamma_\beta(\k)}$ count the number of permutations of $\psi$-legs attached to each vertex of the pre-Feynman diagram. 
		\item the factor $2^{\sum_{i\in\N}\gamma_\beta(i,i)}$ is related to the exchange of $\tilde{\psi}$-legs. We should think of each vertex of type $\beta$ as being bipartite, each part corresponding to one of the $\beta$ forming $\beta^2$. Accordingly, we split $\tilde{\psi}^2$ by attaching a single auxiliary field to each part of the bipartite vertex. As a result, when the number of $\psi$-legs attached to the two parts coincide, an exchange of the $\tilde{\psi}$-legs leaves the pre-Feynman diagram unchanged. This crucial hidden symmetry of the pre-Feynman diagrams is an artefact of our choice to treat pairs of $\beta$-vertices as single ones. 
	\end{itemize}
We are now in a position to introduce the map which, to any pre-Feynman diagram $z^\gamma\in\mathcal{M}_{\smallF}$, associates the combination of integrals corresponding to all possible Feynman diagrams that it induces. 
\begin{definition}\label{Def: realization map M}
	The \emph{realization map}
	$\Pi:\mathcal{M}_{\smallF}\times[0,T]\to\R$ is defined by 
	\begin{equation*}
		\Pi_t(z_{\bullet,n})=\Pi_t(z_{\alpha,n})=\Pi_t(z_{\beta,\k})=1\hspace{1.5cm}\forall n\in\N,\k\in\N^2_\leq\,,
	\end{equation*}
	while, for $z^\gamma\in\mathcal{M}_{\smallF}$ such that $[\gamma]>1$,  
	\begin{align}\label{Eq: PiM}
		&\Pi_t(z^\gamma):=\nonumber\\
		&\quad\langle\Gamma_\Theta^{mloc}\bigl[  \prod_{m=0}^\infty\Psi^{k\gamma_\bullet(k)}  \hat{\bigotimes}_{n=0}^\infty(\tilde{\Psi}\Psi^n)^{\otimes \gamma_\alpha(n)}\hat{\bigotimes}_{\k\in\N^2_\leq}(\tilde{\Psi}^2\Psi^{|\k|})^{\hat{\otimes}\gamma_\beta(\k)}\bigr]\Big\vert_{\substack{\psi=0\\\tilde{\psi}=0}},\delta_t\otimes \chi^{\otimes( [\gamma]_\alpha+[\gamma]_\beta)}\rangle\,.
	\end{align}
\end{definition}

\begin{remark}\label{Rem: non connected components}
	In \cite{bruned2025renormalising} the realization map on multi-indices differs from the one in \eqref{Eq: PiM} by the subtraction of all non-connected Feynman diagrams, \emph{i.e.} products of Feynman diagrams generated by the contractions of subgroups of the vertices forming the pre-Feynman diagrams. We could in principle modify \eqref{Eq: PiM} by subtracting them as well. Yet, due to the oriented nature of the propagator any disconnected Feynman diagram vanishes identically, see \cite{bonicelli2023algebraic}. 
\end{remark}

\begin{remark}\label{Rem: population not needed}
	The evaluation at $\psi=\tilde{\psi}=0$ in \eqref{Eq: PiM} ensures that, as soon a the number of $\psi$-legs and $\tilde{\psi}$-legs in $z^\gamma$ differs, $\Pi_t(z^\gamma)=0$. As already remarked in \cite{bruned2025renormalising}, this guarantees that $\Pi_t(z^\gamma)=0$ for all $z^\gamma\in\mathcal{M}_{\smallF}\setminus\mathcal{M}_{\smallF}^p$. Hence, every time the realization map is involved, we are authorized to drop the population condition. We stress that in \eqref{Eq: final formula MSR} distributional-valued functionals are evaluated at $\psi=u_0$. In the proof of Theorem \ref{Thm: MSR multi-indices} we will show that, by expanding the all functions of the field functional $\Psi$ around $u_0 \one$, such evaluation coincides with centering fields on the vanishing configuration.
\end{remark}

\subsection{Derivation of Equation \ref{Eq: MSR Butcher}}\label{Sec: Multi-indices decomposition of expectations}
In this section we derive an expansion of the path integral expectations of the solution to \eqref{Eq: SDE} labelled by Feynman multi-indices. Our strategy is reminiscent of the one adopted in the proof of \cite[Lem. 4.12]{bruned2025renormalising} to derive a formal expansion of the cumulants generating functional for a generic scalar field theory.
\begin{theorem}\label{Thm: MSR multi-indices}
	Let $\alpha,\beta, f$ be real analytic functions and fix $u_0\in\R$. In addition, let $\Upsilon_{u_0}^{\alpha,\beta}$ and $\Pi$ be the maps introduced in Definitions \ref{Def: elementary differential feynman} and \ref{Def: realization map M}, respectively.
	Then, denoting by $u$ the solution of \eqref{Eq: SDE} with initial value $u_0$, the path integral expectation value $\mathbb{E}^{\smallMSR}[f(u_t)]$, $t\in[0,T]$, is represented by the formal series
	\begin{equation}\label{Eq: MSR multi-indices}
		\mathbb{E}^{\smallMSR}[f(u_t)]=\sum_{\gamma\in\mathcal{M}_{\smallF}}\frac{\Upsilon_{u_0}^{\alpha,\beta,f}(z^\gamma)}{\sigma_{\smallF}(z^\gamma)}\Pi_{t}(z^\gamma)\,,
	\end{equation}
	where $\sigma_{\smallF}(z^\gamma)$ is the symmetry factor defined in \eqref{Eq: symmetry factor Feynman multi}. 
\end{theorem}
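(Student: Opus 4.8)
The plan is to start from the closed form \eqref{Eq: final formula MSR} of the MSR expectation and to turn the two formal exponentials together with the test function into a sum indexed by Feynman multi-indices, following the bookkeeping of \cite[Lem.~4.12]{bruned2025renormalising}. First I would Taylor expand the three nonlinearities around $u_0$, writing $f(\psi(t))=\sum_m \frac{f^{(m)}(u_0)}{m!}(\psi(t)-u_0)^m$, then $\tilde{\psi}\alpha(\psi)=\sum_n\frac{\alpha^{(n)}(u_0)}{n!}\tilde{\psi}(\psi-u_0)^n$, and finally $\tfrac12\tilde{\psi}^2\beta^2(\psi)=\tfrac12\tilde{\psi}^2\sum_{k_1,k_2}\frac{\beta^{(k_1)}(u_0)\beta^{(k_2)}(u_0)}{k_1!\,k_2!}(\psi-u_0)^{k_1+k_2}$. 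Recentring the fields by $u_0$, so that the evaluation $\psi=u_0$ becomes $\psi=0$ as anticipated in Remark \ref{Rem: population not needed}, replaces every $(\psi-u_0)$ by $\psi$. Expanding the tensor exponential $e_{\hat{\otimes}}^{\langle\,\cdot\,,\chi\rangle}$ produces at order $N$ a factor $\frac1{N!}$ and a multinomial split into $p$ factors of type $\alpha$ and $q$ factors of type $\beta$; collecting the factors sharing a common fertility then reorganises the whole series as a sum over $\gamma\in\mathcal{M}_{\smallF}$, the power $m$ of $f$ fixing the root component $\gamma_\bullet$.

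The heart of the argument is the combinatorial accounting, which must reproduce exactly the symmetry factor \eqref{Eq: symmetry factor Feynman multi}. The derivatives $\alpha^{(n)}(u_0)$, $f^{(m)}(u_0)$ and $\beta^{(k_1)}(u_0)\beta^{(k_2)}(u_0)$ assemble into $\Upsilon_{u_0}^{\alpha,\beta,f}(z^\gamma)$ of Definition \ref{Def: elementary differential feynman}, while the numerical prefactors collect into $1/\sigma_{\smallF}(z^\gamma)$: the multinomial coefficients against the $\frac1{N!}$ from the exponential leave $\prod_n\gamma_\alpha(n)!\prod_{\k}\gamma_\beta(\k)!$ in the denominator, accounting for indistinguishable vertices of equal type and fertility, while the Taylor denominators give $(n!)^{\gamma_\alpha(n)+\gamma_\bullet(n)}$ and $(\k!)^{\gamma_\beta(\k)}$. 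The delicate point is the $\beta^2$ term: passing from the full index set to the triangular domain $\N^2_{\leq}$, each off-diagonal pair $(k_1,k_2)$ with $k_1<k_2$ appears twice and its factor $2$ cancels the prefactor $\tfrac12$, whereas each diagonal term $k_1=k_2=i$ retains the surviving $\tfrac12$; this is precisely the origin of the factor $2^{\sum_i\gamma_\beta(i,i)}$ in $\sigma_{\smallF}$. This is where I expect the main obstacle to lie, and I would carry it out carefully, treating the diagonal and off-diagonal contributions separately before recombining them under the triangular index.

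Once the analytic prefactor $\Upsilon_{u_0}^{\alpha,\beta,f}(z^\gamma)$ and the combinatorial factor $1/\sigma_{\smallF}(z^\gamma)$ have been extracted, the remaining object is the contraction of the field monomial $\prod_m\Psi^{m\gamma_\bullet(m)}\hat{\bigotimes}_n(\tilde{\Psi}\Psi^n)^{\otimes\gamma_\alpha(n)}\hat{\bigotimes}_{\k}(\tilde{\Psi}^2\Psi^{|\k|})^{\otimes\gamma_\beta(\k)}$ under $\Gamma_{\Theta}^{mloc}$, evaluated at $\psi=\tilde{\psi}=0$ and paired against $\delta_t\otimes\chi^{\otimes([\gamma]_\alpha+[\gamma]_\beta)}$. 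By Definition \ref{Def: realization map M} this is exactly $\Pi_t(z^\gamma)$: the $\delta_t$ encodes that $f$ is evaluated at the time $t$ attached to the root, and one cut-off $\chi$ accompanies each interaction vertex, matching the grading \eqref{Eq: length multi-index 2}. Summing over $\gamma$ then yields \eqref{Eq: MSR multi-indices}.

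The principal difficulties are thus twofold. The combinatorial obstacle is the consistent treatment of the $\beta^2$ interaction described above, which has no counterpart in the single-nonlinearity setting of \cite{bruned2025renormalising} and is responsible for the non-standard factor in $\sigma_{\smallF}$. The analytic obstacle is that all these manipulations, namely expanding the two exponentials, interchanging the Taylor series with the contraction map, and evaluating at the singular configuration, are only formal at the level of the power series; their rigorous justification rests on the distribution-valued functional calculus and the multilinearity of $\Gamma_{\Theta}^{mloc}$ recalled in Appendix \ref{Sec: deformation maps}, which I would invoke to legitimise each reordering order by order in $[\gamma]$.
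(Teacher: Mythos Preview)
Your proposal is correct and follows essentially the same strategy as the paper: Taylor expand $f$, $\tilde\psi\alpha(\psi)$ and $\tfrac12\tilde\psi^2\beta^2(\psi)$ around $u_0$, expand the exponential via iterated multinomials, split the $\beta^2$ sum into diagonal and strictly triangular parts so that the surviving $\tfrac12$ on the diagonal produces the factor $2^{\sum_i\gamma_\beta(i,i)}$ in $\sigma_{\smallF}$, and then identify the remaining contracted field monomial with $\Pi_t(z^\gamma)$. The only nuance is that the paper does not attempt to justify bringing $\Gamma_\Theta$ inside the series analytically; it simply declares this to be the \emph{definition} of its action on the formal expansion and defers the justification to the a posteriori identification with the exotic B-series, so your ``analytic obstacle'' is handled by fiat rather than by the functional calculus of Appendix~\ref{Sec: deformation maps}.
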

\begin{proof}
	As a preliminary step we identify the interacting potentials that characterize the MSR action with their Taylor series around $u_0\one$. Let us denote by $r_\alpha,r_\beta, r_f>0$ the radii of convergence of $\alpha$, $\beta$ and $f$ around $u_0$, respectively and let us set $r:=\min\{r_\alpha,r_\beta, r_f\}$. Given the auxiliary nature of the smooth configurations $\psi,\tilde{\psi}\in C^\infty(\R)$, we have the right to cast upon them additional constraints. In particular, we shall pick $\psi$ such that $|\psi(t)-u_0|<r$ for $t\in[0,T]$. Under this assumption, the Taylor expansions of $\alpha(\psi(t))$ and $\beta(\psi(t))$ around $u_0$ are absolutely convergent. Hence we are in the position to consider
	\begin{align*}							&\tilde{\psi}(t)\alpha(\psi(t))=\sum_{\ell=0}^\infty\frac{\alpha^{(\ell)}(u_0)}{\ell!}\tilde{\psi}(t)(\psi(t)-u_0)^\ell\,,\\
	&\tilde{\psi}(t)^2\beta^2(\psi(t))=\sum_{n=0}^\infty\frac{1}{n!}(\beta^2)^{(n)}(u_0)\tilde{\psi}(t)^2(\psi(t)-u_0)^n=\sum_{n=0}^\infty\frac{1}{n!}\Bigl(\sum_{k=0}^n\binom{n}{k}\beta^{(k)}(u_0)\beta^{(n-k)}(u_0)\Bigr)\tilde{\psi}(t)^2(\psi(t)-u_0)^n
		\end{align*}
		\begin{align*}
		&=2\sum_{\substack{n<k}}\frac{1}{n! k!}\beta^{(n)}(u_0)\beta^{(k)}(u_0)\tilde{\psi}(t)^2(\psi(t)-u_0)^{n+k}+\sum_{n=0}^\infty\frac{1}{(n!)^2}(\beta^{(n)}(u_0))^2\tilde{\psi}(t)^2 (\psi(t)-u_0)^{2n}\,.
	\end{align*}
	The separation in the last line is crucial, as it will become apparent in the following. The same procedure must be carried over for $f(\psi(t))$. As a second step we address the expansion of the exponential of the action \eqref{Eq: action} in terms of integral kernels. 
	\begin{align}
		&\exp\left(\int_{0}^T\tilde{\psi}(s)\alpha(\psi(s))\,ds+\frac{1}{2}\int_0^T\tilde{\psi}^2(s)\beta^2(\psi(s))\,ds\right)f(\psi(t))\label{Eq: argument expectation}\\
		&=\sum_{N=0}^\infty\frac{1}{N!}\left(\int_0^T\tilde{\psi}(s)\alpha(\psi(s))\,ds+\frac{1}{2}\int_0^T\tilde{\psi}^2(s)\beta^2(\psi(s))\,ds\right)^N\sum_{k=0}^\infty\frac{f^{(k)}(u_0)}{ k!}(\psi(t)-u_0)^{k}\nonumber\\
		&=\sum_{N=0}^\infty\sum_{M=0}^N\frac{1}{M!(N-M)!}\left(\sum_{l=0}^\infty\frac{\alpha^{(l)}(u_0)}{l!}\int_0^T\tilde{\psi}(s)(\psi(s)-u_0)^l\,ds\right)^M\nonumber\\
		&\hspace{2cm}\Bigl(\sum_{\substack{n<k}}\frac{1}{n! k!}\beta^{(n)}(u_0)\beta^{(k)}(u_0)\int_0^T\tilde{\psi}^2(s)(\psi(s)-u_0)^{k+n}\,ds\nonumber\\
		&\hspace{2.5cm}+\frac{1}{2}\sum_{n=0}^\infty\frac{1}{(n!)^2}(\beta^{(n)}(u_0))^2\int_0^T \tilde{\psi}^2(s)(\psi(s)-u_0)^{2n}\,ds\Bigr)^{N-M}\sum_{k=0}^\infty\frac{f^{(k)}(u_0)}{ k!}(\psi(t)-u_0)^{k}\,.\nonumber
	\end{align}
	In the last identity we exchanged series and integrals by Fubini's theorem thanks to the absolute convergence of the former under the analyticity assumption on the coefficients and the choice of $\psi$ as above. We adopt the short-hand notations
	\begin{equation}
		A_l:=\int_0^T \tilde{\psi}(s)(\psi(s)-u_0)^l\,ds\,\qquad B_{j,i}:=\int_0^T\tilde{\psi}^2(s)(\psi(s)-u_0)^{i+j}\,ds\,.
	\end{equation}
	Resorting to the multinomial theorem we have
	\begin{align}
		&\left(\sum_{l=0}^\infty\frac{\alpha^{(l)}(u_0)}{l!}A_l\right)^M=\sum_{\substack{\{m_l\}_{l\in\N}\\\sum_{l=0}^\infty m_l=M}}\frac{M!}{\prod_{l=0}^\infty m_l!}\prod_{l=1}^\infty (\alpha^{(l)}(u_0))^{m_l}\Bigl(\frac{1}{l!}A_l\Bigr)^{m_l}\label{Eq: expansion alpha}\,.
	\end{align}
	Analogously, an iterated application of the multinomial theorem on the terms involving $\beta$ results in the following more explicit expression
	\begin{align}
		&\Bigl(\sum_{\substack{n<k}}\frac{1}{n! k!}\beta^{(n)}(u_0)\beta^{(k)}(u_0)B_{k,n}+\frac{1}{2}\sum_{n=0}^\infty\frac{1}{(n!)^2}(\beta^{(n)}(u_0))^2B_{n,n}\Bigr)^{N-M}\nonumber\\
		&=\sum_{\substack{p_1,p_2\nonumber\\ p_1+p_2=N-M}}\frac{(N-M)!}{p_1!p_2!}\Bigl(\sum_{\substack{n<k}}\frac{1}{n! k!}\beta^{(n)}(u_0)\beta^{(k)}(u_0)B_{k,n}\Bigr)^{p_1}\Bigl(\frac{1}{2}\sum_{n=0}^\infty\frac{1}{(n!)^2}(\beta^{(n)}(u_0))^2B_{n,n}\Bigr)^{p_2}\nonumber\\
		&=\sum_{\substack{p_1,p_2\\ p_1+p_2=N-M}}\frac{(N-M)!}{p_1!p_2!}\sum_{\substack{\{q_i\}_{i\in\N}, \{r_i\}_{i\in\N}\\\sum_{i}q_i=p_1\\\sum_ir_i=p_2}}\frac{p_1!p_2!}{\prod_{i}q_i!\prod_{i}r_i!}\prod_{i}\Bigl(\sum_{\substack{n=0}}^{i}\frac{1}{n! i!}\beta^{(n)}(u_0)\beta^{(i)}(u_0)B_{n,i}\Bigr)^{q_i}\nonumber\\
		&\hspace{8cm}\prod_{i}\frac{1}{2^{r_i}}\Bigl(\frac{1}{(i!)^2}(\beta^{(i)}(u_0))^2B_{i,i}\Bigr)^{r_i}\nonumber
		\end{align}
		\begin{align}
		&=\sum_{\substack{p_1,p_2\\ p_1+p_2=N-M}}\sum_{\substack{\{q_i\}_{i\in\N}, \{r_i\}_{i\in\N}\\\sum_{i}q_i=p_1\\\sum_ir_i=p_2}}\frac{(N-M)!\prod_{i}\beta^{(i)}(u_0)^{2r_i}}{\prod_{i}r_i!2^{r_i}}\prod_{i}\Bigl(\frac{1}{(i!)^2}B_{i,i}\Bigr)^{r_i}\nonumber\\
		&\hspace{3cm}\prod_{i}\sum_{\substack{\{z_j\}_{j=0}^{i-1}\\\sum_jz_j=q_i}}\frac{\prod_{j=0}^{i-1}(\beta^{(j)}(u_0)\beta^{(i)}(u_0))^{z_j}}{\prod_{j=0}^{i-1}z_j!}\Bigl(\frac{1}{j! i!}B_{j,i}\Bigr)^{z_j}\label{Eq: expansion beta}\,.
	\end{align}
	Plugging \eqref{Eq: expansion alpha} and \eqref{Eq: expansion beta} in \eqref{Eq: argument expectation} and rearranging the finite sums, we obtain
	\begin{align*}
		&\sum_{N=0}^\infty\sum_{M=0}^N\frac{1}{M!(N-M)!}\sum_{\substack{\{m_l\}_{l\in\N}\\\sum_{l=0}^\infty m_l=M}}\frac{M!}{\prod_{l=0}^\infty m_l!}\prod_{l} (\alpha^{(l)}(u_0))^{m_l}\Bigl(\frac{1}{l!}A_l\Bigr)^{m_l}\\
		&\sum_{\substack{p_1,p_2\\ p_1+p_2=N-M}}\sum_{\substack{\{q_i\}_{i\in\N}, \{r_i\}_{i\in\N}\\\sum_{i}q_i=p_1\\\sum_ir_i=p_2}}\frac{(N-M)!\prod_{i}\beta^{(i)}(u_0)^{2r_i}}{\prod_{i}r_i!2^{r_i}}\prod_{i}\frac{1}{2^{r_i}}\Bigl(\frac{1}{(i!)^2}B_{i,i}\Bigr)^{r_i}\\
		&\hspace{2.5cm}\prod_{i}\sum_{\substack{\{z_j\}_{j=0}^{i-1}\\\sum_jz_j=q_i}}\frac{\prod_{j=0}^{i-1}(\beta^{(j)}(u_0)\beta^{(i)}(u_0))^{z_j}}{\prod_{j=0}^{i-1}z_j!}\Bigl(\frac{1}{j! i!}B_{j,i}\Bigr)^{z_j}\sum_{k=0}^\infty\frac{f^{(k)}(u_0)}{ k!}(\psi(t)-u_0)^{k}
	\end{align*}
	\begin{align}
		&=\sum_{\substack{\{m_l\}_{l\in\N}\\ \{z^i_j\}_{(j,i)\in\N^2_<}\\ \{r_i\}_{i\in\N}}}\frac{1}{\prod_{l=0}^\infty m_l!}\prod_{l} (\alpha^{(l)}(u_0))^{m_l}\Bigl(\frac{1}{l!}A_l\Bigr)^{m_l}\frac{\prod_{i}\beta^{(i)}(u_0)^{2r_i}}{\prod_{i}r_i!2^{r_i}}\prod_{i}\Bigl(\frac{1}{(i!)^2}B_{i,i}\Bigr)^{r_i}\nonumber\\
		&\hspace{3cm}\prod_{(j,i)\in\N^2_<}\frac{(\beta^{(j)}(u_0)\beta^{(i)}(u_0))^{z^i_j}}{z^i_j!}\Bigl(\frac{1}{j! i!}B_{j,i}\Bigr)^{z^i_j}\sum_{k=0}^\infty\frac{f^{(k)}(u_0)}{ k!}(\psi(t)-u_0)^{k}\,.\label{Eq: complicated series}
	\end{align}
	As a last step we apply the contraction map $\Gamma_\Theta$. Observe that the products of monomial within the series closely resembles the arguments of $\Gamma_\Theta$ in the definition of the realization map $\Pi$, see \eqref{Eq: PiM}. We stress that, since we do not have a convergence result for the expression in \eqref{Eq: complicated series}, we define the action of $\Gamma_\Theta$ on it by bringing the linear operator inside the series. The proof that the ensuing series coincides with the exotic B-series derived in the previous section justifies this ansatz, which is true for every truncated version of the expansion. We can turn the sum over $\{m_l\},\{z^i_j\},\{r_i\}$ into a sum over Feynman multi-indices via the following identification:
	\begin{itemize}
		\item $\gamma_\beta(j,i)=z_j^i$ for all $(j,i)\in\N_{<}^2$,
		\item $\gamma_\beta(i,i)=r_i$ for all $i\in\N$,
		\item $\gamma_{\alpha}(l)=m_l$ for all $l\in\N$,
		\item $\gamma_{\bullet}(k)=1$ and $\gamma_{\bullet}(p)=0$ for all $p\in\N\setminus\{k\}$,
	\end{itemize}
	Hence
	\begin{align*}
		\mathbb{E}^{\smallMSR}[f(u_t)]&=\sum_{\gamma\in\mathcal{M}_{\smallF}}\frac{\prod_{l} (\alpha^{(l)}(u_0))^{\gamma_\alpha(l)}(f^{(l)}(u_0))^{\gamma_\bullet(l)}\prod_{(j,i)\in\N^2_\leq}(\beta^{(j)}(u_0)\beta^{(i)}(u_0))^{\gamma_{\beta}(j,i)}}{\prod_{l} \gamma_\alpha(l)!(l!)^{\gamma_\alpha(l)+\gamma_\bullet(l)}\prod_{(j,i)\in\N^2_\leq}\gamma_\beta(j,i)!(i!j!)^{\gamma_\beta(j,i)} 2^{\sum_i\gamma_\beta(i,i)}k!}\\
		&\hspace{2cm}\Gamma_\Theta\Bigl(\prod_{l} \Bigl(A_l\Bigr)^{\gamma_\alpha(l)}
		\prod_{(j,i)\in\N^2_\leq}\Bigl(B_{j,i}\Bigr)^{\gamma_\beta(j,i)}\prod_k(\psi_{t}-u_0)^{k\gamma_\bullet(k)}\Bigr)\Big\vert_{\substack{\psi=u_0\\ \tilde{\psi}=0}}\,.
	\end{align*}
	Recalling the definitions of $\Upsilon_{u_0}^{\alpha,\beta,f}, \sigma_{\smallF}(z^\gamma)$ and $\Pi_t(z^\gamma)$ in \eqref{Eq: map multi-elementary}, \eqref{Eq: symmetry factor Feynman multi} and \eqref{Eq: PiM}, respectively we conclude.
\end{proof}

\begin{remark}
	Since the expansion of MSR expectations reduces to the combinatorial problem of accounting for all possible contractions realized by the map $\Gamma_\Theta$, it would be natural to adopt a construction based on MSR Feynman diagrams instead of multi-indices, with the major advantage of adapting it to vector-valued SDEs. Despite this, a formalism based on multi-indices is crucial to handle the combinatorial aspects of the problem, as symmetry factors arise naturally, see the proof of Theorem \ref{Thm: MSR multi-indices}, in stark contrast with \cite{bonicelli2023algebraic} where the use of Feynman diagrams is efficient in the additive and multiplicative case only.   
\end{remark}

\section{Multi-indices exotic B-series}\label{Sec: Multi-indices exotic B-series}
One of the goals of this paper is to draw a comparison between the multi-indices MSR expansion of expectations derived in Section \ref{Sec: Multi-indices decomposition of expectations} and the exotic B-series formulation of the semigroup for the diffusion process obtained in Section \ref{Sec: exotic s-series}. The expansion in Corollary \ref{Cor: key corollary} is close in spirit to the formal series of Theorem \ref{Thm: MSR multi-indices}, the only difference being the set of combinatorial objects used to label their terms. Hence, to conclude our program we must translate the tree-based expansions obtained in the previous sections in the language of multi-indices.

We observe that the exotic coloured trees of Definition \ref{Def: Coloured exotic trees} coincide with the MSR Feynman diagrams introduced in Section \ref{Sec: MSR Feynman diagrams}: They share the same oriented nature, while the exotic decoration codifies the fact that $\beta$ appears in the interacting action through its square. Hence it is natural to adopt the Feynman multi-indices of Section \ref{Sec: Multi-indices decomposition of expectations} as the class of combinatorial object that will replace exotic trees.
Guided by the multi-index representation of Feynman diagrams proposed in \cite{bruned2025renormalising}, we devote this section to a reformulation of the expansion in Corollary \ref{Cor: key corollary} in terms of multi-indices. 
\begin{definition}\label{Def: counting map exotic}
	We introduce a map $\Psi:\mathcal{T}^{\mathfrak{e}}_{\alpha,\beta}\to \mathcal{M}_{\smallF}$ whose action on $\tau\in \mathcal{T}^{\mathfrak{e}}_{\alpha,\beta}$ reads
	\begin{equation*}
		\Psi(\tau):=z_{\bullet,n}\un{(\mathfrak{n}(\rho_{\tau})=n)}\prod_{v\in\mathcal{V}_{\alpha}(\tau)}z_{\alpha,\mathfrak{n}(v)}\prod_{\substack{v_1,v_2\in\mathcal{V}_\beta(\tau) \\ \mathfrak{n}(v_1)\leq \mathfrak{n}(v_2)\\ \mathfrak{e}(v_1)=\mathfrak{e}(v_2)}}z_{\beta, (\mathfrak{n}(v_1), \mathfrak{n}(v_2))}\,,
	\end{equation*}
	where $\mathfrak{n}:\mathcal{V}(\tau)\to\N$ counts the fertility of a vertex, see Definition \ref{Def: graphical notation perturbative}.
\end{definition}

\begin{remark}\label{Rem: population psi}
	The population condition \ref{Def: population} translates at the level of multi-indices the requirement that, to represent Feynman diagrams, the total number of $\tilde{\psi}$-fields and $\psi$-fields must coincide. Since the map $\Psi$ of Definition \ref{Def: counting map exotic} practically counts the vertices of the exotic tree, it automatically maps Feynman diagrams into populated multi-indices. One can adapt the procedure devised in \cite[\S 2]{bruned2025multi} to prove that the image of $\Psi$ coincides with $\mathcal{M}_{\smallF}^p$.
\end{remark}
We are also interested in a map that does the opposite job, namely that sends a populated Feynman multi-index into all the exotic trees that the multi-index represents. Observe that in general this map will not be injective since, for a fixed set of fertilities, we can build different trees. As an example to bear in mind consider the multi-index $z^\gamma=z_{\bullet,1}z_{\alpha,2}z_{\alpha,1}z_{\alpha,2}^2$ and the trees
\begin{equation*}
	\tau_1=
	\begin{tikzpicture}[scale=0.2,baseline=0.1cm]
		\node at (0,0) [dot] (root) {};
		\node at (0,2.5) [dotred] (root1) {};
		\node at (-2,5) [dotred] (centerl) {};
		\node at (+2,5) [dotred] (centerr) {};
		\node at (-2,7.5)  [dotred] (centerll) {};
		\draw[kernel1] (centerl) to
		node [sloped,below] {\small }     (root1);
		\draw[kernel1] (centerr) to
		node [sloped,below] {\small }     (root1);
		\draw[kernel1] (centerll) to
		node [sloped,below] {\small }     (centerl);
		\draw[kernel1] (root1) to
		node [sloped,below] {\small }     (root);
	\end{tikzpicture}
	\qquad
	\tau_2=
	\begin{tikzpicture}[scale=0.2,baseline=0.1cm]
		\node at (0,0) [dot] (root) {};
		\node at (0,2.5) [dotred] (root1) {};
		\node at (0,5) [dotred] (root2) {};
		\node at (-2,7.5) [dotred] (centerl) {};
		\node at (+2,7.5) [dotred] (centerr) {};
	\draw[kernel1] (centerl) to
	node [sloped,below] {\small }     (root2);
	\draw[kernel1] (centerr) to
	node [sloped,below] {\small }     (root2);
	\draw[kernel1] (root1) to
	node [sloped,below] {\small }     (root);
	\draw[kernel1] (root2) to
	node [sloped,below] {\small }     (root1);
	\end{tikzpicture}
\end{equation*}
It is a simple exercise to check that $\Psi(\tau_1)=\Psi(\tau_2)=z^\gamma$. Once again we follow the lines of \cite{bruned2025renormalising} and we introduce a suitable \emph{dual} of $\Psi$, whose form will be justified ultimately by the fact that it provides the result that we need. We refer to \cite{zhu2024free} and \cite[\S 3]{bruned2025renormalising} for a detailed explanation of why the symmetry factors are there and in which sense the two maps are in duality.
\begin{definition}\label{Def: map Phi}
	The map 
	$\Phi:\mathcal{M}^p_{\smallF}\to\langle\mathcal{T}^\mathfrak{e}_{\alpha,\beta}\rangle$  is defined as 
	\begin{equation}
		\Phi(z^\gamma):=\sum_{\substack{\tau\in\mathcal{T}^\mathfrak{e}_{\alpha,\beta} \\ \Psi(\tau)=z^\gamma}}\frac{\sigma_{\smallF}(z^\gamma)}{\sigma_{\mathfrak{e}}(\tau)}\tau\,,\qquad z^\gamma\in\mathcal{M}^p_{\smallF}\,,
	\end{equation}
	where $\sigma_{\smallF}(z^\gamma)$ and $\sigma_\mathfrak{e}(\tau)$ are the symmetry factors defined in \ref{Def: symmetry factor Feynman multi} and \ref{Def: automorphism tree}, respectively. 
\end{definition}
In Definition \ref{Def: automorphism tree} we introduced the notion of automorphism of an exotic coloured tree (or Feynman diagram from now on). Since Feynman multi-indices represent pre-Feynman diagrams, \emph{i.e.} collections of vertices with attached free legs that must be joined to form proper diagrams, there is a larger class of transformation that leave the ensuing exotic tree unchanged. 
\begin{definition}[\cite{bruned2025renormalising}, Def. 4.2]\label{Def: isomorphism}
		An isomorphism of the exotic coloured tree $(\tau,\mathfrak{t},\e)\in\cT_{\alpha,\beta}^{\mathfrak{e}}$ is a map that permutes the free legs and vertices of the corresponding pre-Feynman diagram to produce a tree with the same shape as $\tau$ and that preserves the endpoints of each free leg, as well as the decorations. They are the composition of two group actions: $g_\mathcal{V}$ permuting vertices and $g_\mathcal{E}$ permuting free legs such that
	\begin{equation}
		\tau':=g_\mathcal{V}\circ g_\mathcal{E}\,(\tau)\cong \tau\,,
	\end{equation}
	We denote by $\text{Iso}(\tau)$ the space of isomorphisms of $\tau\in\mathcal{T}^\mathfrak{e}_{\alpha,\beta}$.
\end{definition}
The size of the isomorphism group of $\tau$ coincides with $\sigma_{\smallF}(\Psi(\tau))$. Indeed, for any $n\in\N$ and $(k_1,k_2)\in\N^2_\leq$, the factors $(n!)^{\gamma_\alpha(n)}$ and $(k_1!k_2!)^{\gamma_\beta(k_1,k_2)}$ in  $\sigma_{\smallF}(\Psi(\tau))$ correspond to the number of permutations of free legs attached to the vertices, while $\gamma_\alpha(n)!$, $\gamma_\beta(k_1,k_2)!$ account for permutations of vertices with the same fertility. Both operations leave the corresponding exotic tree unchanged. A key difference with respect to \cite{bruned2025renormalising} is the presence of a hidden symmetry responsible for the factor $2^{\sum_{i}\gamma_{\beta}(i,i)}$: if the vertices of $\doublebullet$ in the pre-Feynman diagram have the same number of incoming legs, their permutation belongs to $\text{Iso}(\Phi(z^\gamma))$ and must be accounted for in $\sigma_{\smallF}(z^\gamma)$.
For an example that illustrates the difference between homomorphisms and isomorphisms on a similar class of graphs we refer the reader to \cite[Ex. 5]{bruned2025renormalising}.

\begin{lemma}\label{Lem: number of ways}
	Let $\tau\in\mathcal{T}_{\alpha,\beta}^{\mathfrak{e}}$ and let $z^\gamma=\Psi(\tau)$. Then, the factor $\frac{\sigma_{\smallF}(z^\gamma)}{\sigma_{\mathfrak{e}}(\tau)}$ coincides with the number of different ways of linking the free legs of the pre-Feynman diagram represented by $\Psi(\tau)$ that yield an exotic tree isomorphic to $\tau$.
\end{lemma}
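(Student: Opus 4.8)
The plan is to apply the Orbit–Stabiliser theorem to the action of the isomorphism group $\text{Iso}(\tau)$ on the set of linkings of the pre-Feynman diagram encoded by $z^\gamma = \Psi(\tau)$. First I would make precise the object being counted: write $\mathcal{L}$ for the set of all ways of pairing the free $\tilde{\psi}$-legs with the free $\psi$-legs of the pre-Feynman diagram that yield a connected, oriented diagram, and let $\mathcal{L}_\tau \subseteq \mathcal{L}$ be the subset of those linkings whose resulting exotic coloured tree is isomorphic to $\tau$. The quantity the lemma asks for is then exactly $|\mathcal{L}_\tau|$.

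The group $\text{Iso}(\tau)$ of Definition \ref{Def: isomorphism} acts on $\mathcal{L}$: an element $g = g_{\mathcal{V}} \circ g_{\mathcal{E}}$ permutes vertices of equal colour and fertility together with the free legs attached to them, and therefore carries a linking $L$ to a linking $g\cdot L$. I would fix a reference linking $L_0 \in \mathcal{L}_\tau$ realising $\tau$ itself, and then identify the stabiliser of $L_0$ with the automorphism group. Indeed, an isomorphism $g$ fixes $L_0$ as a set of realised edges precisely when it maps each edge of $\tau$ to an edge of $\tau$ while preserving orientation, endpoints and all decorations, i.e. precisely when $g \in \text{Aut}(\tau,\mathfrak{t},\mathfrak{e})$. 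Hence $\mathrm{Stab}_{\text{Iso}(\tau)}(L_0) = \text{Aut}(\tau,\mathfrak{t},\mathfrak{e})$, whose cardinality is $\sigma_{\mathfrak{e}}(\tau)$ by definition.

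The crux is to show that the orbit of $L_0$ under $\text{Iso}(\tau)$ is all of $\mathcal{L}_\tau$. Every $g\cdot L_0$ lies in $\mathcal{L}_\tau$, since applying an isomorphism produces an isomorphic tree; conversely, given $L \in \mathcal{L}_\tau$, a tree isomorphism $\phi$ between the diagram produced by $L$ and that produced by $L_0$ exists by the definition of $\mathcal{L}_\tau$, and $\phi$ lifts to a permutation of the vertices and free legs of the common pre-Feynman diagram, yielding an element $g \in \text{Iso}(\tau)$ with $g\cdot L_0 = L$. Here I would verify that $\phi$ respects colours and fertilities, so that it permutes vertices only within the classes allowed by $\text{Iso}(\tau)$, and that its action on the legs incident to paired $\beta$-vertices is compatible with the exchange of the two $\tilde{\psi}$-legs of a pair whose two parts carry the same number of $\psi$-legs. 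This is exactly the hidden symmetry accounted for by the factor $2^{\sum_i \gamma_\beta(i,i)}$ in $\sigma_{\smallF}(z^\gamma)$, and checking its compatibility is where the restriction to the triangular domain $\N^2_\leq$ must be used to avoid double-counting.

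With these two facts the Orbit–Stabiliser theorem yields
\begin{equation*}
	|\mathcal{L}_\tau| = \frac{|\text{Iso}(\tau)|}{|\text{Aut}(\tau,\mathfrak{t},\mathfrak{e})|} = \frac{\sigma_{\smallF}(z^\gamma)}{\sigma_{\mathfrak{e}}(\tau)}\,,
\end{equation*}
where I invoke the identity $|\text{Iso}(\tau)| = \sigma_{\smallF}(\Psi(\tau))$ recorded immediately after Definition \ref{Def: isomorphism}. I expect the main obstacle to be the careful verification of transitivity on $\mathcal{L}_\tau$ — namely that an abstract isomorphism between the two realised trees can always be promoted to a relabelling of the \emph{fixed} pre-Feynman diagram, rather than one requiring a genuine alteration of the underlying combinatorial data — together with the bookkeeping ensuring that degenerate $\beta$-pairs contribute precisely the factor $2^{\sum_i \gamma_\beta(i,i)}$ and no more.
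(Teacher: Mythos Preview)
Your proposal is correct and follows essentially the same route as the paper: both apply the Orbit--Stabiliser theorem to the action of $\mathrm{Iso}(\tau)$ on the set of linkings of the pre-Feynman diagram, identifying the stabiliser of a fixed linking with $\mathrm{Aut}(\tau,\mathfrak{t},\mathfrak{e})$ and the orbit with the set $\mathcal{L}_\tau$ of linkings producing a tree isomorphic to $\tau$. Your write-up is in fact more explicit than the paper's sketch (which largely defers to \cite{bruned2025renormalising}), particularly in spelling out the transitivity step and the bookkeeping for the $2^{\sum_i \gamma_\beta(i,i)}$ factor.
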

\begin{proof}
	The proof is identical to the one of \cite[Prop. 4.3]{bruned2025renormalising}, to which we refer for the detains. We sketch it here for completeness. 
	Given the isomorphism group of $\tau\in\cT_{\alpha,\beta}^\e$, its stabilizer $S(\tau)$ is the subgroup of transformations that preserve the pairing of free legs, whose cardinality coincides with $\sigma_{\mathfrak{e}}(\tau)$. At the same time the orbits $O(\tau)$ of $\tau$ are those transformations lying in $Iso(\tau)$ that alter the contractions of free legs. In other words, 
	\begin{equation*}
		O(\tau)=\frac{Iso(\tau)}{Aut(\tau)}\,.
	\end{equation*}
	The Orbit-Stabilizer Theorem allows to conclude that
	\begin{equation}
		\sigma_{\smallF}(z^\gamma)=|Iso(\tau)|=|O(\tau)|| Aut(\tau)|=|O(\tau)|\sigma_{\mathfrak{e}}(\tau)\,.
	\end{equation}
	Hence we conclude by identifying the cardinality of $O(\tau)$ with the number of different ways of linking the free legs of the pre-Feynman diagram to obtain $\tau$.
\end{proof}

%

\begin{theorem}\label{Thm: identity expectation multi}
	Let $\Pi$ and $\Pi^{\mathfrak{e}}$ be the realization maps as per Definition \ref{Def: realization map M} and Equation \ref{Eq: realization map exotic trees correct}, respectively and let us consider the intertwining map $\Phi:\mathcal{M}^p_{\smallF}\to\langle\mathcal{T}^\mathfrak{e}_{\alpha,\beta}\rangle$ of Definition \eqref{Def: map Phi}. Then, for any $\gamma\in\mathcal{M}^{\smallB}$,
	\begin{equation}\label{Eq: identity pi phi}
		\Pi^\mathfrak{e}_{t}(\Phi(z^\gamma))=\Pi_{t}(z^\gamma)\,.
	\end{equation}
\end{theorem}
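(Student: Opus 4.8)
The plan is to prove \eqref{Eq: identity pi phi} by expanding both sides into a common combinatorial sum over the contractions (leg-pairings) of the pre-Feynman diagram encoded by $z^\gamma$, and then reconciling the two via the Orbit--Stabilizer count of Lemma \ref{Lem: number of ways}. First, by linearity of the exotic realization map $\Pi^\mathfrak{e}_t$ together with Definition \ref{Def: map Phi}, the left-hand side reads
\begin{equation*}
\Pi^\mathfrak{e}_t(\Phi(z^\gamma)) = \sum_{\substack{\tau\in\cT^\mathfrak{e}_{\alpha,\beta}\\ \Psi(\tau)=z^\gamma}} \frac{\sigma_{\smallF}(z^\gamma)}{\sigma_\mathfrak{e}(\tau)}\, \Pi^\mathfrak{e}_t(\tau)\,,
\end{equation*}
so it suffices to prove that $\Pi_t(z^\gamma)$ equals this weighted sum over the exotic trees represented by $z^\gamma$.

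The second step is to unpack the multi-index realization map of Definition \ref{Def: realization map M}. The repeated action of $\Gamma^{mloc}_\Theta$ on the tensor product of field functionals, once evaluated at $\psi=\tilde\psi=0$, produces a sum over all maximal contractions of the $\tilde\psi$-legs with the $\psi$-legs carried by the vertices of the pre-Feynman diagram. Each such pairing $P$ assigns the propagator $\Theta$ to every resulting edge, hence determines a Feynman diagram $\Gamma_P$ with associated integral $\Pi_t(\Gamma_P)$ as in Section \ref{Sec: MSR Feynman diagrams}. By Remark \ref{Rem: non connected components}, any pairing yielding a disconnected diagram contributes zero because of the oriented nature of $\Theta$; thus only connected $\Gamma_P$ survive, and each such diagram is an exotic coloured tree $\tau_P$ with $\Psi(\tau_P)=z^\gamma$.

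The third and analytically delicate step is to verify that for each surviving pairing the integral $\Pi_t(\Gamma_P)$ coincides with $\Pi^\mathfrak{e}_t(\tau_P)$ as given by \eqref{Eq: realization map exotic trees correct}. The only point requiring care is the treatment of the $\beta$-vertices: in the pre-Feynman diagram a contribution $\tilde\psi^2\beta^2(\psi)$ sits at a single integration time $s$ and carries two $\tilde\psi$-legs, whereas in $\tau_P$ it is drawn as a pair $v_1,v_2\in\mathcal{V}_\beta(\tau_P)$ sharing a common exotic decoration. The Dirac deltas $\delta(t_{v_1}-t_{v_2})$ appearing in \eqref{Eq: realization map exotic trees correct} precisely collapse the two split vertices back to the single time $s$, so that after integrating out the deltas the product of Heaviside kernels in $\Pi^\mathfrak{e}_t(\tau_P)$ reproduces exactly the integrand of $\Pi_t(\Gamma_P)$; the It\^o prescription $\Theta(0)=0$ consistently disposes of the ill-defined products flagged after \eqref{Eq: realization map exotic trees correct}.

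Finally, I would group the pairings by the isomorphism class of the exotic tree they produce. Since isomorphic diagrams carry identical integrals, and since Lemma \ref{Lem: number of ways} identifies, via the Orbit--Stabilizer theorem, the number of pairings of the free legs of $z^\gamma$ yielding a tree isomorphic to a fixed $\tau$ with the factor $\sigma_{\smallF}(z^\gamma)/\sigma_\mathfrak{e}(\tau)$, we conclude
\begin{equation*}
\Pi_t(z^\gamma) = \sum_P \Pi_t(\Gamma_P) = \sum_{\substack{\tau\in\cT^\mathfrak{e}_{\alpha,\beta}\\ \Psi(\tau)=z^\gamma}} \frac{\sigma_{\smallF}(z^\gamma)}{\sigma_\mathfrak{e}(\tau)}\, \Pi^\mathfrak{e}_t(\tau)\,,
\end{equation*}
which matches the right-hand side obtained in the first step. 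I expect the main obstacle to lie in this last reconciliation combined with the third step: one must rigorously match the contraction-generated integral with the exotic realization map, in particular handle the splitting of each $\beta^2$-vertex into a delta-identified pair, and check that the book-keeping of legs, together with the hidden factor $2^{\sum_i\gamma_\beta(i,i)}$ in $\sigma_{\smallF}$, is exactly absorbed by the counting of Lemma \ref{Lem: number of ways}.
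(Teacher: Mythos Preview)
Your proposal is correct and follows essentially the same route as the paper: expand $\Pi_t(z^\gamma)$ as a sum over all leg-pairings of the pre-Feynman diagram (the Linked Cluster Theorem), identify each surviving connected contraction with an exotic tree whose integral is $\Pi^\mathfrak{e}_t(\tau)$, and then invoke Lemma~\ref{Lem: number of ways} to collect isomorphic pairings into the weight $\sigma_{\smallF}(z^\gamma)/\sigma_\mathfrak{e}(\tau)$, which matches $\Pi^\mathfrak{e}_t(\Phi(z^\gamma))$ by linearity. You supply more detail than the paper on the $\beta$-vertex splitting via Dirac deltas and the vanishing of disconnected contributions, but the logical skeleton is the same.
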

\begin{proof}
	Given the action of $\Gamma_\Theta$ as a map that contracts pairs of free legs $(\tilde{\psi},\psi)$ carried by the vertices of the pre-Feynman diagram in all possible ways, we can represent $\Pi_t(z^\gamma)$ via a combination of Feynman diagrams obtained by linking in all possible ways the free legs of the pre-Feynman diagram represented by $z^\gamma$. On account of Lemma \ref{Lem: number of ways}, this remark translates to the following identity
	\begin{equation*}
		\Pi_t(z^\gamma)=\sum_{\substack{\tau\in\cT_{\alpha,\beta}^{\mathfrak{e}}\\\Psi(\tau)=z^\gamma}}\frac{\sigma_{\smallF}(z^\gamma)}{\sigma_{\mathfrak{e}}(\tau)}\Pi_t^\mathfrak{e}(\tau)\,,
	\end{equation*}
	where the sum is over all non-isomorphic exotic trees and where the factor $\frac{\sigma_{\smallF}(z^\gamma)}{\sigma_{\mathfrak{e}}(\tau)}$ counts the number of ways of joining free legs to create isomorphic trees. 
	Then, resorting to the form of $\Phi$ and to the linearity of $\Pi_t^\mathfrak{e}$, we conclude. Note that, unlike in \cite[Cor. 4.5]{bruned2025renormalising}, the proof is simpler since we did not subtract disconnected Feynman diagrams in the definition of $\Pi_t$, see Remark \ref{Rem: non connected components}.
\end{proof}
\begin{proposition}\label{Prop: expectation S-series multi-indices}
	Under the assumptions of Corollary \ref{Cor: key corollary}, the path integral expectation $\mathbb{E}^{\smallMSR}[f(u_t)]$ coincides with the exotic B-series representing $\mathbb{E}^{u_0}[f(u_t)]$, see Corollary \ref{Cor: key corollary}.
\end{proposition}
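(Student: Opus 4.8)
The plan is to read the statement off the commuting diagram of the introduction: the vertical identity having already been established in Theorem~\ref{Thm: identity expectation multi} and the two expansions at its endpoints being in hand, it remains only to chain them together and re-sum. Concretely, I would start from the multi-index expansion of the path integral furnished by Theorem~\ref{Thm: MSR multi-indices},
\begin{equation*}
	\mathbb{E}^{\smallMSR}[f(u_t)]=\sum_{\gamma\in\mathcal{M}_{\smallF}}\frac{\Upsilon_{u_0}^{\alpha,\beta,f}(z^\gamma)}{\sigma_{\smallF}(z^\gamma)}\Pi_{t}(z^\gamma)\,,
\end{equation*}
and transport each summand to an expression over exotic trees by means of the intertwining maps $\Psi,\Phi$ of Definitions~\ref{Def: counting map exotic} and~\ref{Def: map Phi}, until the right-hand side of Corollary~\ref{Cor: key corollary} emerges. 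Since every object at play is a formal power series in $t$, all manipulations are understood term by term and no convergence issue arises.

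First I would restrict the sum to populated multi-indices: by Remark~\ref{Rem: population not needed} one has $\Pi_t(z^\gamma)=0$ for $z^\gamma\notin\mathcal{M}^p_{\smallF}$, so the sum may be taken over $\gamma\in\mathcal{M}^p_{\smallF}$ without loss. On this domain Theorem~\ref{Thm: identity expectation multi} applies and lets me replace $\Pi_t(z^\gamma)$ by $\Pi_t^\e(\Phi(z^\gamma))$. Expanding $\Phi(z^\gamma)$ through Definition~\ref{Def: map Phi} and using the linearity of $\Pi_t^\e$ gives
\begin{equation*}
	\Pi_t(z^\gamma)=\Pi_t^\e(\Phi(z^\gamma))=\sum_{\substack{\tau\in\cT^\e_{\alpha,\beta}\\\Psi(\tau)=z^\gamma}}\frac{\sigma_{\smallF}(z^\gamma)}{\sigma_\mathfrak{e}(\tau)}\Pi_t^\e(\tau)\,,
\end{equation*}
so that, inserting this back and cancelling the factor $\sigma_{\smallF}(z^\gamma)$ against the one in the denominator of the original summand,
\begin{equation*}
	\mathbb{E}^{\smallMSR}[f(u_t)]=\sum_{\gamma\in\mathcal{M}^p_{\smallF}}\Upsilon_{u_0}^{\alpha,\beta,f}(z^\gamma)\sum_{\substack{\tau\in\cT^\e_{\alpha,\beta}\\\Psi(\tau)=z^\gamma}}\frac{\Pi_t^\e(\tau)}{\sigma_\mathfrak{e}(\tau)}\,.
\end{equation*}

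The one point deserving an explicit verification is the compatibility of the two notions of elementary differential. I would check that $\Upsilon_{u_0}^{\alpha,\beta,f}(\tau)=\Upsilon_{u_0}^{\alpha,\beta,f}(z^\gamma)$ whenever $\Psi(\tau)=z^\gamma$, comparing Definition~\ref{Def: elementary differential feynman} with the tree-level map~\eqref{Eq: elementary differential trees}: both are products over vertices in which an $\alpha$-vertex of fertility $n$ contributes $\alpha^{(n)}(u_0)$, a pair of $\beta$-vertices with fertilities $k_1\le k_2$ contributes $\beta^{(k_1)}(u_0)\beta^{(k_2)}(u_0)$, and the root of fertility $m$ contributes $f^{(m)}(u_0)$; since $\Psi$ records exactly these fertility data, the two expressions agree. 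This lets me carry $\Upsilon_{u_0}^{\alpha,\beta,f}(z^\gamma)$ inside the inner sum as $\Upsilon_{u_0}^{\alpha,\beta,f}(\tau)$. Finally, because $\Psi$ sends every exotic tree to a single populated multi-index (Remark~\ref{Rem: population psi}), the fibres $\{\tau:\Psi(\tau)=z^\gamma\}$ partition $\cT^\e_{\alpha,\beta}$ as $\gamma$ ranges over $\mathcal{M}^p_{\smallF}$, so the double sum collapses to
\begin{equation*}
	\mathbb{E}^{\smallMSR}[f(u_t)]=\sum_{\tau\in\cT^\e_{\alpha,\beta}}\frac{\Upsilon_{u_0}^{\alpha,\beta,f}(\tau)}{\sigma_\mathfrak{e}(\tau)}\Pi_t^\e(\tau)\,,
\end{equation*}
which is precisely the exotic B-series for $\mathbb{E}^{u_0}[f(u_t)]$ of Corollary~\ref{Cor: key corollary}. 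I do not expect a genuine obstacle here: the substantive content is already packaged in Theorems~\ref{Thm: MSR multi-indices} and~\ref{Thm: identity expectation multi}, and the present argument is essentially a bookkeeping re-summation whose only subtlety is the fertility-matching identity for the elementary differentials discussed above.
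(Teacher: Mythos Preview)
Your proof is correct and follows essentially the same approach as the paper's: both rely on the compatibility $\Upsilon_{u_0}^{\alpha,\beta,f}(\tau)=\Upsilon_{u_0}^{\alpha,\beta,f}(\Psi(\tau))$, the partition of $\cT^\e_{\alpha,\beta}$ into $\Psi$-fibres, and Theorem~\ref{Thm: identity expectation multi} to pass between the two realization maps. The only cosmetic difference is that the paper runs the chain in the opposite direction—starting from the exotic B-series of Corollary~\ref{Cor: key corollary} and re-summing over $\Psi$-fibres to reach the multi-index expansion of Theorem~\ref{Thm: MSR multi-indices}—whereas you start from the multi-index side and unfold $\Phi$; the content is identical.
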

\begin{proof}
	First we observe that, for any but fixed $\tau\in\cT_{\alpha,\beta}^\mathfrak{e}$, the associated elementary differential $\Upsilon_{u_0}^{\alpha,\beta,f}(\tau)$ as per Definition \ref{Def: elementary butcher} is not sensitive to the topology of the tree, but only to the fertility of each vertex. This implies that $\Upsilon_{u_0}^{\alpha,\beta,f}(\tau)=\Upsilon_{u_0}^{\alpha,\beta,f}(\Psi(\tau))$. Starting from the expression in \eqref{Eq: exact expansion expectation Pi} and resorting to Theorem \ref{Thm: identity expectation multi} we have
	\begin{align*}
		\mathbb{E}^{u_0}\left[f(u_t)\right]&=\sum_{\tau\in\mathcal{T}^{\mathfrak{e}}_{\alpha,\beta}}\frac{\Upsilon_{u_0}^{\alpha,\beta,f}(\tau)}{\sigma_\mathfrak{e}(\tau)}\Pi_t(\tau)=\sum_{\gamma\in\mathcal{M}_{\smallF}}\sum_{\substack{\tau\in\mathcal{T}_{\alpha,\beta}^\mathfrak{e}\\ \Psi(\tau)=z^\gamma}}\frac{\Upsilon_{u_0}^{\alpha,\beta,f}(\tau)}{\sigma_\mathfrak{e}(\tau)}\Pi_t(\tau)\\
		&=\sum_{\gamma\in\mathcal{M}_{\smallF}} \Upsilon_{u_0}^{\alpha,\beta,f}(z^\gamma)  \sum_{\substack{\tau\in\mathcal{T}^\mathfrak{e}_{\alpha,\beta}\\ \Psi(\tau)=z^\gamma}}\frac{\Pi_t(\tau)}{\sigma_{\mathfrak{e}}(\tau)}\stackrel{\eqref{Eq: identity pi phi}}{=}\sum_{\gamma\in\mathcal{M}_{\smallF}}\frac{\Upsilon_{u_0}^{\alpha,\beta,f}(z^\gamma)}{\sigma_{\smallF}(z^\gamma)}\Pi_t(z^\gamma)\,.
	\end{align*}
	where the sum is over all of $\mathcal{M}_{\smallF}$ as $\Pi$ vanishes identically on $\mathcal{M}_{\smallF}\setminus\mathcal{M}^p_{\smallF}$.
	Notice that, for the first identity to hold true it is crucial that the map $\Psi$ is surjective, see remark \ref{Rem: population psi}.
\end{proof}
On account of the proper construction of the exotic B-series representing the action of the semigroup, see Theorem \ref{Thm: main result S-series}, Proposition \ref{Prop: expectation S-series multi-indices} guarantees that the perturbative computation of the MSR expectations as per \eqref{Eq: MSR multi-indices} yields the correct result, at each perturbative order. In addition, a comparison with \eqref{Thm: main result S-series} yields explicit behaviour in time of the truncated series which, for specific instances of the coefficients, may also lead to convergence results for small enough time horizon. 

As anticipated in the introduction, the whole derivation of both the exotic Butcher series for the action of the semigroup, together with the result obtained in Proposition \ref{Prop: expectation S-series multi-indices}, provides us with a new perspective on the interpretation of the path integral. To wit, the erroneous interpretation of the MSR path integral as the expectation of the interacting part with respect to a Gaussian measure happens to coincide with the procedure of natural growth at the basis of the construction of \eqref{Eq: convergent exact expansion expectation}. Indeed, by interpreting legs of the Feynman diagram as the kernels of Heaviside thetas, the contraction of a field $\psi$ with the interaction vertices $\tilde{\psi}\alpha(\psi)$ and $\tilde{\psi}^2\beta^2(\psi)$ coincides with the grafting of an $\alpha$-vertex and a double $\beta$-vertex, respectively. In addition, the topology of the exotic tree obtained via this contraction procedure gives rise to an iterated integral that, once calculated, coincides with a suitable power in time weighted with the expected combinatorial factor. In conclusion, we should think of the formal series \eqref{Eq: MSR multi-indices} as the definition of $\mathbb{E}^{\smallMSR}[f(u_t)]$.

\appendix

\section{Reduction of exotic trees}\label{Sec: reduction}
In Section \ref{Sec: Realization map and the Connes-Moscovici weight} we completely characterized the combinatorial factor that allows to express the image of the realization map $\Pi_t^\e$ as a power of $t\in[0,T]$. In doing so, a crucial role was played by the alternative definition of the tree factorial for exotic trees given in Theorem \ref{Thm: equivalence factorials}. Here we adopt a different point of view based on the form of the integrals codified by closed paths.
\begin{definition}\label{Def: closed branch}
	Let $\tau\in\cT_{\alpha,\beta}^{\mathfrak{e}}$ and let us denote by $\mathsf{C}:\cT_{\alpha,\beta}^{\mathfrak{e}}\to \cF_{\smallMSR}$ the operator that identifies $\beta$-vertices sharing the same exotic decoration and that forgets the colour decoration $\mathfrak{t}$. We call \emph{closed branch} any closed oriented component $\tau'\subseteq \mathsf{C}(\tau)$. Every closed branch has two \emph{closing paths}, defined as the collections of edges connecting the tip of the closed branch to a common vertex.
\end{definition}

The integral represented by a closed branch coincides with a combination of iterated integrals over all possible permutations of the integration variables corresponding to the internal vertices of the closed branch, that maintain their internal order within the closing paths. This specific type of permutation of two sets, referred to as \emph{shuffle product}, is crucial in the algebraic formulation of the geometric condition for products of iterated integrals labelled by words. Observe that the integral codified by a closed branch is almost like a product of iterated integrals, with the exotic decoration acting like an additional constraint. Therefore $\Pi_t(\tau)$ coincides with $\binom{\mathfrak{l}(\tau_1)+\mathfrak{l}(\tau_2)-2}{\mathfrak{l}(\tau_1)-1}\frac{t^{\mathfrak{l}(\tau)-1}}{(\mathfrak{l}(\tau)-1)!}$, where $\tau_1,\tau_2$ are the closing paths of $\tau$. The binomial factor coincides with the number of ways of shuffling the edges of the closing paths, i.e. of sorting them by preserving the relative orders. This reduction procedure is reminiscent of the result derived in  \cite{hairer2015geometric}, where it is shown that any geometric rough path can be turned into a branched one by a suitable shuffle procedure on the branches. Note that, if the closed path does not have subtrees attached to it, all possible outcomes are identical. However, this is not true in the opposite case, where the choice of where to insert a specific vertex leads to a different tree lying in $\cT$. For this reason we introduce the following \emph{reduction of exotic trees}.   

\begin{definition}\label{Def: redaction operator}
	We define a reduction operator $\mathsf{R}:\cT_{\alpha,\beta}^{\mathfrak{e}}\to \langle\cT\rangle$ defined via the following algorithm:
	\begin{enumerate}
		\item for any $\tau\in\cT_{\alpha,\beta}^{\mathfrak{e}}$, we turn it into an MSR Feynman diagram via the operator $\mathsf{C}$ of Definition \ref{Def: closed branch}
		\item starting from the root of $\mathsf{C}(\tau)$ and following the direction of the tree backwards, we search for the first closed branch. Then, picking one of the closing paths, we insert each of its vertices inside the edges composing the other closing path in all possible ways by respecting the original order. 
		\item we repeat this procedure for all closed branches. 
	\end{enumerate}
	We denote by $\mathcal{R}(\tau)$ the family of trees obtained from $\tau\in\cT_{\alpha,\beta}^{\mathfrak{e}}$ via the reduction operation.
\end{definition}


\begin{example}
	Consider the exotic coloured tree
	\begin{equation*}
		\tau=
		\begin{tikzpicture}[scale=0.2,baseline=0.1cm]
			\node at (0,0) [dot] (root) {};
			\node at (-3,3) [dotred] (centerl) {};
			\node at (0,3) [dotred] (centerc) {};
			\node at (3,3) [dotred] (centerr) {};
			\node at (-3,6)  [dotblue] (centerll) {\mbox{\small $1$}};
			\node at (0,6)  [dotblue] (centerrr) {\mbox{\small $1$}};
			\node at (-2,9) [dotred] (lastl) {};
			\node at (2,9) [dotred] (lastr) {};
			\draw[kernel1] (centerl) to
			node [sloped,below] {\small }     (root);
			\draw[kernel1] (centerr) to
			node [sloped,below] {\small }     (root);
			\draw[kernel1] (centerc) to
			node [sloped,below] {\small }     (root);
			\draw[kernel1] (centerrr) to
			node [sloped,below] {\small }     (centerc);
			\draw[kernel1] (centerll) to
			node [sloped,below] {\small }     (centerl);
			\draw[kernel1] (lastl) to
			node [sloped,below] {\small }     (centerrr);
			\draw[kernel1] (lastr) to
			node [sloped,below] {\small }     (centerrr);
		\end{tikzpicture}\hspace{2cm}
		\mathsf{C}(\tau)=\begin{tikzpicture}[scale=0.2,baseline=0.1cm]
			\node at (0,0) [dot] (root) {};
			\node at (-3,3) [dot] (centerl) {};
			\node at (0,3) [dot] (centerc) {};
			\node at (3,3) [dot] (centerr) {};
			\node at (0,6)  [dot] (centerrr) {};
			\node at (-2,9) [dot] (lastl) {};
			\node at (2,9) [dot] (lastr) {};
			\draw[kernel1] (centerl) to
			node [sloped,below] {\small }     (root);
			\draw[kernel1] (centerr) to
			node [sloped,below] {\small }     (root);
			\draw[kernel1] (centerc) to
			node [sloped,below] {\small }     (root);
			\draw[kernel1] (centerrr) to
			node [sloped,below] {\small }     (centerc);
			\draw[kernel1] (centerrr) to
			node [sloped,below] {\small }     (centerl);
			\draw[kernel1] (lastl) to
			node [sloped,below] {\small }     (centerrr);
			\draw[kernel1] (lastr) to
			node [sloped,below] {\small }     (centerrr);
		\end{tikzpicture}
	\end{equation*}
	By applying the algorithm we get
	\begin{equation*}
		\mathsf{R}(\tau)= 2\;
		\begin{tikzpicture}[scale=0.2,baseline=0.1cm]
			\node at (0,0) [dot] (root) {};
			\node at (-2,2.5) [dot] (centerlb) {};
			\node at (-2,5) [dot] (centerl) {};
			\node at (2,2.5) [dot] (centerr) {};
			\node at (-2,7.5)  [dot] (centerll) {};
			\node at (-4,10) [dot] (lastl) {};
			\node at (-0,10) [dot] (lastr) {};
			
			\draw[kernel] (centerlb) to
			node [sloped,below] {\small }     (root);
			\draw[kernel] (centerl) to
			node [sloped,below] {\small }     (centerlb);
			\draw[kernel] (centerr) to
			node [sloped,below] {\small }     (root);
			\draw[kernel] (centerll) to
			node [sloped,below] {\small }     (centerl);
			\draw[kernel] (lastl) to
			node [sloped,below] {\small }     (centerll);
			\draw[kernel] (lastr) to
			node [sloped,below] {\small }     (centerll);
		\end{tikzpicture}\,,	
	\end{equation*}
	where the factor $2$ derives from the fact that inserting the only internal vertex of one branch above or below the one of the other does not change the structure of the ensuing tree. It is useful to go through an example in which such distinction is crucial:
	\begin{equation*}
		\tau'=\begin{tikzpicture}[scale=0.2,baseline=0.1cm]
			\node at (0,0) [dot] (root) {};
			\node at (-2,3) [dotred] (centerl) {};
			\node at (3,3) [dotred] (centerr) {};
			\node at (-2,6)  [dotblue] (centerll) {\mbox{\small $1$}};
			\node at (1.5,6)  [dotblue] (centerrr) {\mbox{\small $1$}};
			\node at (5,6) [dotred] (topr) {};
			\draw[kernel1] (centerl) to
			node [sloped,below] {\small }     (root);
			\draw[kernel1] (centerr) to
			node [sloped,below] {\small }     (root);
			\draw[kernel1] (centerrr) to
			node [sloped,below] {\small }     (centerr);
			\draw[kernel1] (centerll) to
			node [sloped,below] {\small }     (centerl);
			\draw[kernel1] (topr) to
			node [sloped,below] {\small }     (centerr);
		\end{tikzpicture}\hspace{2cm}
		\mathsf{C}(\tau')=
		\begin{tikzpicture}[scale=0.2,baseline=0.1cm]
			\node at (0,0) [dot] (root) {};
			\node at (-2,3) [dot] (centerl) {};
			\node at (2,3) [dot] (centerr) {};
			\node at (0,6)  [dot] (centerrr) {};
			\node at (4,6) [dot] (topr) {};
			\draw[kernel1] (centerl) to
			node [sloped,below] {\small }     (root);
			\draw[kernel1] (centerr) to
			node [sloped,below] {\small }     (root);
			\draw[kernel1] (centerrr) to
			node [sloped,below] {\small }     (centerr);
			\draw[kernel1] (centerrr) to
			node [sloped,below] {\small }     (centerl);
			\draw[kernel1] (topr) to
			node [sloped,below] {\small }     (centerr);
		\end{tikzpicture}
	\end{equation*}
	This time we have
	\begin{equation*}
		\mathsf{R}(\tau')=
		\begin{tikzpicture}[scale=0.2,baseline=0.1cm]
			\node at (0,0) [dot] (root) {};
			\node at (0,2) [dot] (1) {};
			\node at (0,4) [dot] (2) {};
			\node at (-2,6)  [dot] (3) {};
			\node at (2,6)  [dot] (4) {};
			
			\draw[kernel] (1) to node [sloped,below] {\small } (root);
			\draw[kernel] (2) to node [sloped,below] {\small } (1);
			\draw[kernel] (3) to node [sloped,below] {\small } (2);
			\draw[kernel] (4) to node [sloped,below] {\small } (2);
		\end{tikzpicture}
		\quad+\quad
		\begin{tikzpicture}[scale=0.2,baseline=0.1cm]
			\node at (0,0) [dot] (root) {};
			\node at (0,2) [dot] (1) {};
			\node at (-2,4) [dot] (2) {};
			\node at (2,4)  [dot] (4) {};
			\node at (-2,6)  [dot] (3) {};

			\draw[kernel] (1) to node [sloped,below] {\small } (root);
			\draw[kernel] (2) to node [sloped,below] {\small } (1);
			\draw[kernel] (3) to node [sloped,below] {\small } (2);
			\draw[kernel] (4) to node [sloped,below] {\small } (1);
		\end{tikzpicture}\,.
	\end{equation*}
\end{example}
From what we observed above, given $\tau\in\cT_{\alpha,\beta}^\e$ and denoting by $\mathcal{R}(\tau)\subset \cT$ the space of trees obtained via the reduction operator $\mathsf{R}$, it follows that 
\begin{equation}\label{Eq: reduced Pi}
	\Pi_{st}^\e(\tau)=\sum_{\bar{\tau}\in\mathcal{R}(\tau)}\Pi_{st}(\bar{\tau})\,,
\end{equation}
where $\Pi_{st}(\bar{\tau})$ is the branched integral represented by the non-decorated rooted tree $\bar{\tau}$. Interestingly enough, the explicit expression of the realization map on $\cT_{\alpha,\beta}^\e$ derived in Proposition \ref{Prop: explicit form Pi} implies the following result, which is of independent interest. 
\begin{lemma}\label{Lem: identity factorials}
	Let $\tau\in\cT^\e_{\alpha,\beta}$ and let $\mathcal{R}(\tau)$ the family of trees obtained from $\tau$ via the reduction operator $\mathsf{R}$, see Definition \ref{Def: redaction operator}. Then
	\begin{equation}
		\frac{1}{\tau!}=\sum_{\bar{\tau}\in\mathcal{R}(\tau)}\frac{1}{\bar{\tau}!}\,.
	\end{equation}
\end{lemma}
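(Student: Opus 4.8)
The plan is to read off the identity by comparing two expressions for the exotic realization map $\Pi^\e_t(\tau)$ and matching the coefficient of the relevant power of $t$. On one hand, Proposition \ref{Prop: explicit form Pi} gives the closed form
\[
	\Pi^\e_t(\tau)=\frac{|\tau|!}{\tau!}\frac{t^{\mathfrak{l}(\tau)}}{\mathfrak{l}(\tau)!}\,.
\]
On the other hand, the reduction identity \eqref{Eq: reduced Pi} expresses the same quantity as $\Pi^\e_t(\tau)=\sum_{\bar\tau\in\mathcal{R}(\tau)}\Pi_t(\bar\tau)$, a sum over undecorated rooted trees. The central observation is that each $\bar\tau\in\mathcal{R}(\tau)\subset\cT$ may be regarded as an exotic coloured tree all of whose vertices carry the colour $\alpha$ and whose exotic decoration is empty; for such a tree the Dirac deltas in \eqref{Eq: realization map exotic trees correct} are absent, so $\Pi_t(\bar\tau)=\Pi^\e_t(\bar\tau)$ and Proposition \ref{Prop: explicit form Pi} applies verbatim, yielding $\Pi_t(\bar\tau)=\frac{|\bar\tau|!}{\bar\tau!}\frac{t^{\mathfrak{l}(\bar\tau)}}{\mathfrak{l}(\bar\tau)!}$.

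The second ingredient is that the reduction operator $\mathsf{R}$ of Definition \ref{Def: redaction operator} preserves the number of edges: inserting the vertices of one closing path into the other respects the original order but neither creates nor destroys edges, so $\mathfrak{l}(\bar\tau)=\mathfrak{l}(\tau)$ for every $\bar\tau\in\mathcal{R}(\tau)$ (counted with multiplicity). Consequently every summand carries the same power $t^{\mathfrak{l}(\tau)}$, and the factor $t^{\mathfrak{l}(\tau)}/\mathfrak{l}(\tau)!$ can be pulled out of the sum. Equating the two expressions for $\Pi^\e_t(\tau)$ and cancelling this common factor gives
\[
	\frac{|\tau|!}{\tau!}=\sum_{\bar\tau\in\mathcal{R}(\tau)}\frac{|\bar\tau|!}{\bar\tau!}\,,
\]
which is the claim; the sum is understood with the same multiplicities as in \eqref{Eq: reduced Pi}, as illustrated by the explicit computation preceding Definition \ref{Def: redaction operator}.

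The few points requiring care are bookkeeping ones rather than a genuine obstacle. First, the symbol $|\tau|$ on the left denotes the exotic grading $|\tau|_\e$; since $|\tau|_\e=\mathfrak{l}(\tau)+1=\mathfrak{l}(\bar\tau)+1=|\bar\tau|$, the numerator factorials on the two sides in fact coincide, so the identity could equivalently be read as $1/\tau!=\sum_{\bar\tau}1/\bar\tau!$. Second, one must ensure that \eqref{Eq: reduced Pi} and the closed form of Proposition \ref{Prop: explicit form Pi} are invoked with the same convention $\Theta(0)=0$, which is automatic as both are derived under the It\^o prescription. No separate induction is needed: the entire content of the lemma is the transport of the already-established analytic identity $\Pi^\e_t(\tau)=\sum_{\bar\tau}\Pi_t(\bar\tau)$ through the explicit evaluation of the realization maps.
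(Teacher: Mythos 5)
Your proof is correct and follows essentially the same route as the paper: it combines the closed form of Proposition \ref{Prop: explicit form Pi} with the reduction identity \eqref{Eq: reduced Pi}, views each $\bar\tau\in\mathcal{R}(\tau)$ as an all-$\alpha$ exotic tree so that the closed form applies to it as well, and uses that $\mathsf{R}$ preserves the number of edges to cancel the common factor $t^{\mathfrak{l}(\tau)}/\mathfrak{l}(\tau)!$. The extra bookkeeping you add on the gradings (that $|\tau|!=\mathfrak{l}(\tau)+1$ matches $|\bar\tau|$, so the identity reduces to $1/\tau!=\sum_{\bar\tau}1/\bar\tau!$) is a harmless and in fact clarifying addition to the paper's argument.
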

\begin{proof}
	The evaluation maps in the right-hand side of \eqref{Eq: reduced Pi} fall under the scope of \eqref{Eq: explicit expression pi} by regarding $\tau\in\cT$ as a tree lying in $\cT_{\alpha}\subset \cT_{\alpha,\beta}$ by identifying all the vertices aside from the root with $\alpha$-decorated ones. Then
	\begin{equation*}
		\Pi^\e_t(\tau)=\sum_{\bar{\tau}\in\mathcal{R}(\tau)}\frac{|\bar{\tau}|}{\bar{\tau}!}t^{|\bar{\tau}|-1}\,.
	\end{equation*}
	At the same time, \eqref{Eq: explicit expression pi} guarantees that $\Pi^\e_t(\tau)=\frac{|\tau|}{\tau!}t^{|\tau|-1}$. Recalling that $\mathsf{R}$ preserves the number of vertices we conclude. 
\end{proof}

The identification in \eqref{Eq: reduced Pi} can be further refined to compare our definition of exotic tree factorial with the definition of factorial for a generic combinatorial Hopf algebra proposed in \cite{curry2020planarly}. First we observe that the action of $\Pi_{st}$ on $\cT$ identifies iterated integrals of the identity path, hence it satisfies a Chen's relation of the form
	\begin{equation}\label{Eq: Chen exotic}
		\Pi_{st}(\bar{\tau})=(\Pi_{su}\circ B_+\otimes \Pi_{ut}\circ B_+)\Delta B_-(\tau)\,,
	\end{equation}
	where $\Delta$ denotes the Butcher-Connes-Kreimer coproduct on $\cT$, see \cite{hoffman2003combinatorics} for a definition. That \eqref{Eq: Chen exotic} holds true follows from observing that, denoting by $\bar{\Pi}_{st}(\tau)$ the iterated integral described by a rooted trees $\tau\in\cT$ as recursively defined in \cite{gubinelli2010ramification}, we have the identification 
	\begin{equation*}
		\Pi_{st}(\tau)=\bar{\Pi}_{st}(B_-(\tau))\,,\hspace{2cm}\forall\tau\in\cT\,.
	\end{equation*}
	Accordingly
	\begin{align*}
		\Pi_{st}(\tau)=\bar{\Pi}_{st}(B_-(\tau))=\sum_{(B_-(\tau))}\bar{\Pi}_{su}(\tau_1)\bar{\Pi}_{ut}(\tau_2)=\sum_{(B_-(\tau))}\Pi_{su}(B_+(\tau_1))\Pi_{ut}(B_+(\tau_2))\,,
	\end{align*}
	where we adopted Sweedler's notation for the Butcher-Connes-Kreimer coproduct.
	Exploiting the explicit form of the exotic realization map as per \eqref{Eq: explicit expression pi} implies
	\begin{align*}
		\frac{|\tau|}{\tau!}(t-s)^{|\tau|-1}&=\sum_{\bar{\tau}\in\mathcal{R}(\tau)}\Pi_{st}(\bar{\tau})=\sum_{\bar{\tau}\in\mathcal{R}(\tau)}\sum_{(B_-(\bar{\tau}))}\Pi_{su}(B_+(\bar{\tau}_1))\Pi_{ut}(B_+(\bar{\tau}_2))\\
		&=\sum_{\bar{\tau}\in\mathcal{R}(\tau)}\sum_{(B_-(\bar{\tau}))}\bar{\Pi}_{su}(\bar{\tau}_1)\bar{\Pi}_{ut}(\bar{\tau}_2)=\sum_{\bar{\tau}\in\mathcal{R}(\tau)}\sum_{(B_-(\bar{\tau}))}\frac{(u-s)^{|\bar{\tau}_1|}(t-u)^{|\bar{\tau}_2|}}{\bar{\tau}_1!\bar{\tau}_2!}
	\end{align*}
	By choosing $t-s=2$, $u-s=t-u=1$ we derive the relation
	\begin{equation}\label{Eq: formula tree factorial 2}
		\frac{|\tau|}{\tau!}=\frac{1}{2^{|\tau|-1}}\sum_{\bar{\tau}\in\mathcal{R}(\tau)}\sum_{(B_-(\bar{\tau}))}\frac{1}{\bar{\tau}_1!\bar{\tau}_2!}\,.
	\end{equation}
The identity \eqref{Eq: formula tree factorial 2} should be considered as an equivalent definition of tree factorial for exotic rooted trees, closely resembling the one in \cite{curry2020planarly} exept for the presence of the reduction operator. In particular, the left-hand side could be heuristically thought of as $(B_-(\tau)!)^{-1}$, where $B_-$ is the inverse of $B_+$ which maps an exotic tree into the corresponding exotic coloured forest as per Definition \ref{Def: Coloured exotic trees}. Even if Definition \ref{Def: tree factorial} adapts to exotic coloured forests, a proof of the identity $\frac{|\tau|}{\tau!}=\frac{1}{B_-(\tau)!}$ seems out of reach. We postpone its investigation to a future work. 
The similarity with \cite[Eq. 15]{curry2020planarly} should be regarded as a further justification of Definition \ref{Def: tree factorial}.

\section{Examples}\label{Sec: applications}
To show the strength of the exotic B-series representation of expectations of the solution to an It\^o diffusion, we analyse how our construction specializes to situations where the statistics of the solution is known. In the following examples the formal series \eqref{Eq: convergent exact expansion expectation} converges uniformly in time.
Until now, by working with generic coefficients $\alpha$ and $\beta$ in \eqref{Eq: SDE}, no constraints on the topology of the exotic trees used to label the B-series were imposed. In practice, the form of the coefficients forces us to adopt a \emph{rule}, \emph{i.e.}, a restriction on the admissible fertility of the vertices that compose the tree. 
\subsection*{Ornstein-Uhlenbeck process}
Let us consider the well known Ornstein-Uhlenbeck process, defined as the solution of the Cauchy problem on $\R_+$
\begin{equation}
	\begin{cases}
		du_t=-au_tdt+\sigma dW_t\,,\qquad \\
		u(0)=u_0
	\end{cases}\,,
\end{equation}
where $a,\sigma>0$ are non-negative constants. To make contact with the notation of the previous sections we set $\alpha(u_t):=-au_t$ and $\beta(u_t):=\sigma$. Here we can better understand the notion of rule: given the construction of the It\^o-Taylor expansion of Section \ref{Sec: ito-taylor} in terms of differential operators and their interpretation as grafting operations, it follows that vertices of type $\alpha$ can have either zero or one incoming edges, while $\beta$-vertices must have fertility zero. An additional rule is set by the choice of the smooth function $f$. Being interested in the expectation of the process, let us consider first $f(u):=u$. At the level of trees, this translates into asking for the root to have at most one incoming edge as well. As a result, the only trees allowed are linear trees obtained by concatenation of $\alpha$-vertices. For any $\tau\in\cT_{\alpha}$ of this form it immediately follows that
\begin{equation*}
	\frac{|\tau|!}{\sigma_{\mathfrak{e}}(\tau)\,\tau!}=1\,,\qquad \Upsilon_{u_0}^{\alpha,\beta,f}(\tau)=u_0 (-a)^{|\tau|-1}\,.
\end{equation*}
Observing that simple linear trees can be identified with their number of vertices,
the expression in \eqref{Eq: convergent exact expansion expectation} reduces to 
\begin{equation*}
	\mathbb{E}^{u_0}[u_t]=\sum_{n=1}^\infty(-a)^{n-1} u_0 \frac{t^{n-1}}{(n-1)!}=u_0e^{-at}\,.
\end{equation*}
For what concerns the variance of $u_t$ the rule changes, since in this case we consider $f(u_t)=u_t^2$, so that the root can have up to $2$ incoming edges. This times the only trees that we can builds according to the rule have the form
\begin{equation*}
	\begin{tikzpicture}[scale=0.2,baseline=0.1cm]
		\node at (0,0) [dot] (root) {};
		\node at (0,3)  [dotred] (center) {};
		\node at (0,8)  [dotred] (center2) {};
		\node at (0,4)  [scale=0.5] () {.};
		\node at (0,4.5)  [scale=0.5] () {.};
		\node at (0,5)  [scale=0.5] () {.};
		\node at (0,5)  [scale=0.5] (fine) {};
		\draw[kernel1] (center) to
		node [sloped,below] {\small }     (root);
		\draw[kernel1] (center2) to
		node [sloped,below] {\small }     (fine);
	\end{tikzpicture}
	\hspace{1.5cm}
	\begin{tikzpicture}[scale=0.2,baseline=0.1cm]
		\node at (0,0) [dot] (root) {};
		\node at (-2,3) [dotred] (centerl) {};
		\node at (+2,3) [dotred] (centerr) {};
		\node at (-2,4)  [scale=0.5] () {.};
		\node at (-2,4.5)  [scale=0.5] () {.};
		\node at (-2,5)  [scale=0.5] () {.};
		\node at (-2,5)  [scale=0.5] (fine1) {};
		\node at (2,4)  [scale=0.5] () {.};
		\node at (2,4.5)  [scale=0.5] () {.};
		\node at (2,5)  [scale=0.5] () {.};
		\node at (2,5)  [scale=0.5] (fine2) {};
		\node at (-2,8)  [dotred] (centerll) {};
		\node at (2,8)  [dotred] (centerrr) {};
		\draw[kernel1] (centerl) to
		node [sloped,below] {\small }     (root);
		\draw[kernel1] (centerr) to
		node [sloped,below] {\small }     (root);
		\draw[kernel1] (centerrr) to
		node [sloped,below] {\small }     (fine2);
		\draw[kernel1] (centerll) to
		node [sloped,below] {\small }     (fine1);
	\end{tikzpicture}
	\hspace{1.5cm}
	\begin{tikzpicture}[scale=0.2,baseline=0.1cm]
		\node at (0,0) [dot] (root) {};
		\node at (-2,3) [dotred] (centerl) {};
		\node at (+2,3) [dotred] (centerr) {};
		\node at (-2,4)  [scale=0.5] () {.};
		\node at (-2,4.5)  [scale=0.5] () {.};
		\node at (-2,5)  [scale=0.5] () {.};
		\node at (-2,5)  [scale=0.5] (fine1) {};
		\node at (2,4)  [scale=0.5] () {.};
		\node at (2,4.5)  [scale=0.5] () {.};
		\node at (2,5)  [scale=0.5] () {.};
		\node at (2,5)  [scale=0.5] (fine2) {};
		\node at (-2,8)  [dotblue] (centerll) {\mbox{\small $1$}};
		\node at (2,8)  [dotblue] (centerrr) {\mbox{\small $1$}};
		\draw[kernel1] (centerl) to
		node [sloped,below] {\small }     (root);
		\draw[kernel1] (centerr) to
		node [sloped,below] {\small }     (root);
		\draw[kernel1] (centerrr) to
		node [sloped,below] {\small }     (fine2);
		\draw[kernel1] (centerll) to
		node [sloped,below] {\small }     (fine1);
	\end{tikzpicture}
\end{equation*}
where the dots denote that we can concatenate an arbitrary number of $\alpha$-vertices. Note that, since $\beta$-vertices can only close the concatenation, we can effectively forget about them when computing the combinatorial part of the series. This time the elementary differential is different in that the derivative of $u^2$ produces a factor $2$. Then the contribution from the root and all the concatenations of $\alpha$-vertices yields
\begin{equation*}
	u_0^2+2u_0^2 (e^{-at}-1)\,,
\end{equation*}
Now consider all trees of the second kind above. This time, to account for the symmetry factor, we must distinguish the case where the number of vertices on both sides of the tree coincide or not. Denoting by $\cT^2$ the second family of trees,
\begin{align*}
	\sum_{\tau\in\cT^2}\frac{|\tau|!}{\sigma_\mathfrak{e}(\tau)\tau!}&\Upsilon_{u_0}^{\alpha,\beta,f}(\tau)\frac{t^{\mathfrak{l}(\tau)}}{\mathfrak{l}(\tau)!}=\sum_{n=2}^\infty\sum_{m=1}^{n-1}\frac{(n+m)!}{n!m!}2u_0^2(-a)^{n+m}\frac{t^{n+m}}{(n+m)!}+\sum_{n=1}^\infty\frac{2n!}{2(n!)^2}2u_0^2(-a)^{2n}\frac{t^{2n}}{2n!}\\
	&=2u_0^2\sum_{n=2}^\infty\sum_{m=1}^{n-1}\frac{(-at)^{n+m}}{n!m!}+u_0^2\sum_{n=1}^\infty\frac{(-at)^{2n}}{(n!)^2}
\end{align*}
By completing the sums with $n=0,1$ and $m=0$ and adding the contribution from linear trees computed above, one obtains
\begin{align*}
	u_0^2+2u_0^2 (e^{-at}-1)&+\sum_{\tau\in\cT^2}\frac{|\tau|!}{\sigma_\mathfrak{e}(\tau)\tau!}\Upsilon_{u_0}^{\alpha,\beta,f}(\tau)\frac{t^{\mathfrak{l}(\tau)}}{\mathfrak{l}(\tau)!}=2u_0^2\sum_{n=0}^\infty\sum_{m=0}^{n-1}\frac{(-at)^{n+m}}{n!m!}+u_0^2\sum_{n=0}^\infty\frac{(-at)^{2n}}{(n!)^2}=u_0^2 e^{-2at}.
\end{align*}
This contribution coincides with the square of the expectation value, which is subtracted when computing the variance. 
Moving to trees belonging to the third family, that we denote by $\cT^3$, an analogous reasoning applies. Once again we must break down the sum into symmetric and non-symmetric trees to obtain 
\begin{align*}
	\sum_{\tau\in\cT^3}&\frac{|\tau|!}{\sigma_\mathfrak{e}(\tau)\tau!}\Upsilon_{u_0}^{\alpha,\beta,f}(\tau)\frac{t^{\mathfrak{l}(\tau)}}{\mathfrak{l}(\tau)!}=\sum_{n=1}^\infty\sum_{m=0}^{n-1}\frac{(n+m)!}{n!m!}2\sigma^2(-a)^{n+m}\frac{t^{n+m+1}}{(n+m+1)!}+\sum_{n=0}^\infty\frac{(2n)!}{2n!^2}2\sigma^2(-a)^{2n}\frac{t^{2n+1}}{(2n+1)!}\\
	&=-\frac{2\sigma^2}{a}\sum_{n=1}^\infty\sum_{m=0}^{n-1}\frac{(-at)^{n+m+1}}{(n+m+1)n!m!}-\frac{\sigma^2}{a}\sum_{n=0}^\infty\frac{(-at)^{2n+1}}{(2n+1)n!^2}=-\frac{\sigma}{a}\sum_{n=0}^\infty\sum_{m=0}^\infty\frac{(-at)^{n+m+1}}{(n+m+1)n!m!}\\
	&=-\frac{\sigma}{a}\sum_{k=0}^\infty\frac{(-at)^{k+1}2^k}{(k+1)k!}=\sigma\sum_{k=0}^\infty\frac{(-2a)^k}{k!}\int_0^tx^k\,dx=\frac{\sigma}{2a}(1-e^{-2\sigma t})\,.
\end{align*}
Putting all contributions together we obtain the exact expression for the variance of the Ornstein-Uhlenbeck process 
\begin{equation*}
	\mathbb{E}^{u_0}[u^2_t]-(\mathbb{E}^{u_0}[u_t])^2=\frac{\sigma}{2a}(1-e^{-2\sigma t})\,.
\end{equation*}

\subsection*{Geometric Brownian motion}
Anther relevant example in which the measure of the solution can be determined explicitely is the \emph{geometric Brownian motion}
\begin{equation}\label{Eq: geometric BM}
	\begin{cases}
		du_t=au_tdt+\sigma u_tdW_t\,,\qquad \\
		u(0)=u_0
	\end{cases}\,,
\end{equation}
where $a,\sigma\geq0$ are again non-negative constants. The solution can be readily derived via a logarithmic change of variable and It\^o's formula. Instead of mimicking the line of reasoning adopted for the Ornstein-Uhlenbeck process, we focus on a natural feature of problems with a linear drift. When writing the solution in its weak form as a stochastic convolution, nothing prevents us from including the linear part into the differential operator. If we do so, the weak solution is obtained by convolving with the Green's function of the modified operator, see \cite[\S 2.1]{bonicelli2023algebraic} where this point of view is justified from the standpoint of Feynman diagrams via the introduction of effective edges. We can give a motivation of this choice by recalling that, as proved in Theorem \ref{Thm: main result S-series}, the contributions to the It\^o-Taylor series are in one-to-one correspondence with exotic coloured trees that abide by the rule imposed by the explicit form of $\alpha$ and $\beta$. For a linear drift, a vertex $\alpha$ cannot give rise to a bifurcation since it carries a single $\psi$-leg. Let us focus on the simpler tree with a single vertex $\alpha$, that represents a contribution of the form $\int_0^t\alpha(u_0)f^{(1)}(u_0)\,ds$. When expanding at higher orders the only effect of the linear part is to add internal vertices $\alpha$ to form linear trees as the ones in the previous example. In other words we can group the contributions
\begin{equation*}
	\begin{tikzpicture}[scale=0.2,baseline=0.1cm]
		\node at (0,0) [dot] (root) {};
		\node at (0,3)  [dotred] (center) {};
		\draw[kernel1] (center) to
		node [sloped,below] {\small }     (root);
	\end{tikzpicture}\quad+\quad
	\begin{tikzpicture}[scale=0.2,baseline=0.1cm]
		\node at (0,0) [dot] (root) {};
		\node at (0,3)  [dotred] (center) {};
		\node at (0,6)  [dotred] (center2) {};
		
		\draw[kernel1] (center) to
		node [sloped,below] {\small }     (root);
		\draw[kernel1] (center2) to
		node [sloped,below] {\small }     (center);
	\end{tikzpicture}\quad+\ldots+\quad
	\begin{tikzpicture}[scale=0.2,baseline=0.1cm]
		\node at (0,0) [dot] (root) {};
		\node at (0,3)  [dotred] (center) {};
		\node at (0,8)  [dotred] (center2) {};
		\node at (0,4)  [scale=0.5] () {.};
		\node at (0,4.5)  [scale=0.5] () {.};
		\node at (0,5)  [scale=0.5] () {.};
		\node at (0,5)  [scale=0.5] (fine) {};
		\draw[kernel1] (center) to
		node [sloped,below] {\small }     (root);
		\draw[kernel1] (center2) to
		node [sloped,below] {\small }     (fine);
	\end{tikzpicture}
\end{equation*} 
up to an arbitrary number of $\alpha$-vertices. The analytic contribution to this effective edge reads
\begin{equation*}
	a u_0f^{(1)}(u_0)\int_0^t\left(1+\sum_{n=1}^\infty a^n\int_{[0,s]^n}ds_n\ldots ds_1\right)\,ds=a u_0f^{(1)}(u_0)\int_0^te^{as}\,ds\,.
\end{equation*}
Hence, the net effect of adding up all trees that differ by the number of $\alpha$-vertices between two fixed consecutive vertices is to substitute the edge with an effective one that encodes integration against the exponential kernel $e^{as}$. This procedure can be carried over for all edges, hence leaving as with trees involving only effective edges and $\beta$-vertices. An important difference is that the elementary differential becomes time-dependent: each derivative of the coefficient represented by a vertex must be evaluated at $u_0 e^{as}$, where $s$ is the integration variable that it carries. We can justify this choice by observing that, after this change of paradigm, the expectation value of $u$ receives a single contribution, the one from the root. Hence we should incorporate the elementary differential inside the realization map, in such a way that $\tilde{\Pi}_t(\bullet):=u_0e^{at}$. The definition of $\tilde{\Pi}$
on trees $\tau\in\cT_\beta^\e$ with $|\tau|>1$
differs from the one in \eqref{Eq: realization map exotic trees correct} by the presence of derivatives of $\beta$ evaluated at $u_0e^{as}$ for the integrated internal variable $s$.
This simplification, that we could have exploited also in the previous example, becomes crucial here: this time the rule enforced by \eqref{Eq: geometric BM} allows for bifurcations stemming from $\beta$-vertices. Hence, without this reduction, we could have trees with an arbitrary number of closed paths and $\alpha$-vertices, making the explicit resummation of the B-series unpractical. 
For what concerns $f(u_t)=u_t^2$, the only family of exotic trees that we can construct with the sole $\beta$-vertices have the form
\begin{equation*}
	\begin{tikzpicture}[scale=0.2,baseline=0.1cm]
		\node at (0,0) [dot] (root) {};
		\node at (-2,3) [dotblue] (centerl) {\mbox{\small $1$}};
		\node at (+2,3) [dotblue] (centerr) {\mbox{\small $1$}};
		\node at (-2,4)  [scale=0.5] () {.};
		\node at (-2,4.5)  [scale=0.5] () {.};
		\node at (-2,5)  [scale=0.5] () {.};
		\node at (-2,5)  [scale=0.5] (fine1) {};
		\node at (2,4)  [scale=0.5] () {.};
		\node at (2,4.5)  [scale=0.5] () {.};
		\node at (2,5)  [scale=0.5] () {.};
		\node at (2,5)  [scale=0.5] (fine2) {};
		\node at (-2,8)  [dotblue] (centerll) {\mbox{\small $n$}};
		\node at (2,8)  [dotblue] (centerrr) {\mbox{\small $n$}};
		\draw[kernel1] (centerl) to
		node [sloped,below] {\small }     (root);
		\draw[kernel1] (centerr) to
		node [sloped,below] {\small }     (root);
		\draw[kernel1] (centerrr) to
		node [sloped,below] {\small }     (fine2);
		\draw[kernel1] (centerll) to
		node [sloped,below] {\small }     (fine1);
	\end{tikzpicture}
\end{equation*}
where the dots stand for an arbitrary number of pairs $\doublebullet$. For any $\tau\in\cT_\beta$ of this form such that $|\tau|=n$ we have
\begin{align*}
	\frac{1}{\sigma_\e(\tau)}\tilde{\Pi}_t^\e(\tau)&=\frac{1}{2}\int_{[0,t]^{n-1}}2\sigma^{2(n-1)}u_0^2e^{2as_n} e^{2a(t-s_1)}\ldots e^{2a(s_{n-1}-s_n)} \,ds_n\ldots ds_1=\sigma^{2(n-1)}u_0^2e^{2at}\frac{t^{n-1}}{(n-1)!}\,,
\end{align*}
where $2\sigma^{2(n-1)}u_0^2e^{2as_n}$ is the new, time-dependent contribution to $\tilde{\Pi}$. As a result
\begin{equation*}
	\mathbb{E}^{u_0}[u_t^2]-(\mathbb{E}^{u_0}[u_t])^2=u_0^2e^{2at}\sum_{n=0}^\infty\frac{(\sigma^2 t)}{n!}-u_0^2e^{2at}=e^{2at}(e^{\sigma^2t}-1)\,,
\end{equation*}
in accordance with the result that one would obtain via It\^o's lemma.

\section{Distribution-valued functionals and contraction maps}\label{Sec: deformation maps}
To establish the notation adopted in this section, the universal tensor algebras over $\mathcal{D}(\R)$ and $\mathcal{D}'(\R)$ are defined as
\begin{equation}
	\mathsf{T}(\R):=\R\oplus\bigoplus_{n=1}^\infty\mathcal{D}(\R)^{\otimes n}\,,\hspace{1.5cm}\mathsf{T}'(\R):=\R\oplus\bigoplus_{n=1}^\infty\mathcal{D}'(\R)^{\otimes n}\,.
\end{equation}
In this section we introduce the key notions that allow us to reformulate the MSR path integral in terms of suitable functionals and contraction maps. The interested reader can consult \cite{dappiaggi2022microlocal, bonicelli2023algebraic} for a more detailed account of these notions.
\begin{definition}\label{Def: distribution-valued functional}
	A distribution-valued functional is a map
	\begin{equation*}
		F:C^\infty(\R)\times C^\infty(\R)\to\mathsf{T}'(\R)\,.
	\end{equation*}
	We denote by $\mathcal{F}(\R,\mathsf{T}')$ the space of distribution-valued functionals and we endow it with the \emph{multilocal product}
	\begin{equation*}
		(F\cdot H)(\psi,\tilde{\psi}):=F(\psi,\tilde{\psi})\otimes H(\psi,\tilde{\psi})\,,\qquad F,H\in\mathcal{F}(\R,\mathsf{T}'),\psi,\tilde{\psi}\in C^\infty(\R)\,,
	\end{equation*}
	where $\otimes$ denotes the tensor product of distributions. 
\end{definition}
In the following, given a functional $F\in\mathcal{F}(\R,\mathsf{T}')$ and arbitrary field configurations $\psi,\tilde{\psi}\in C^\infty(\R)$, we denote by $F^{(1,0)}(\psi,\tilde{\psi}), F^{(0,1)}(\psi,\tilde{\psi}): C^\infty(\R)\to \R$ its Frechet derivatives\footnote{Henceforth we shall assume that all distribution-valued functionals of our interest admit Frechet derivatives of all orders. For a detailed discussion on the topologies on space of functionals considered in quantum field theory we refer to \cite{brouder2018properties}}
\begin{equation*}
	F^{(1,0)}(\psi,\tilde{\psi},\varphi):=\frac{d}{ds}F(\psi+s\varphi,\tilde{\psi})\Big\vert_{s=0}\,,\qquad F^{(0,1)}(\psi,\tilde{\psi},\varphi):=\frac{d}{ds}F(\psi,\tilde{\psi}+s\varphi)\Big\vert_{s=0}\,.
\end{equation*}
Higher order derivatives $F^{(n,m)}$ for $n,m\in\N$ are defined accordingly. For what concerns the adoption of this framework to the problem at hand, it suffices to consider the following sub-class of distribution-valued functionals. 
\begin{definition}\label{Def: multilocal functional}
	Let $\mathcal{F}(\R,\mathsf{T}')$ be the space of distribution-valued functionals as per Definition \ref{Def: distribution-valued functional}. We denote by $\mathcal{F}_{mloc}(\R,\mathsf{T}')$ the space of \emph{multi-local distribution-valued functionals} $F$ of the form
	\begin{align*}
		\langle F(\psi,\tilde{\psi}),f\rangle=\sum_{n=0}^\infty\int_{\R^n}f^n(t_1,\ldots, t_n) \,p^{n}(\psi(t_1),\tilde{\psi}(t_1),\ldots, \psi(t_n),\tilde{\psi}(t_n))\,dt_1\ldots dt_n\,,
	\end{align*}
	for all $f\in\mathsf{T}(\R)$, $\psi,\tilde{\psi}\in C^\infty(\R)$.  Here $p^n$ are fixed polynomials, while $f^n$ denotes the n-th component of $f$ in $\mathsf{T}(\R)$. In addition, we denote by $\mathcal{F}_{loc}(\R,\mathsf{T}')\subset \mathcal{F}_{mloc}(\R,\mathsf{T}')$ the subspace of functionals taking values in $\mathcal{D}'(\R)$.
\end{definition}
A special role is played by the \emph{field functionals}
\begin{equation}\label{Eq: field functionals}
	\langle\Psi(\psi,\tilde{\psi}),f\rangle:=\int_{\R}\psi(t)f(t)\,dt\,,\qquad\langle\tilde{\Psi}(\psi,\tilde{\psi}),f\rangle:=\int_{\R}\tilde{\psi}(t)f(t)\,dt\,.
\end{equation}
and by the \emph{constant functional}
\begin{equation}\label{Eq: constant functional}
	\langle\one(\psi,\tilde{\psi}),f\rangle:=\int_{\R}f(t)\,dt\,.
\end{equation}
These functionals have vanishing components in $\mathsf{T}'\setminus\CD(\R)$.
A direct computation shows that $\Psi,\tilde{\Psi}\in\mathcal{F}_{loc}(\R,\mathsf{T}')$. For any $h\in C^\infty(\R\times \R)$ we define the composition with the field functionals as
\begin{equation*}
	\langle h(\Psi,\tilde{\Psi})(\psi,\tilde{\psi}),f\rangle:=\int_{\R}h(\psi(s),\tilde{\psi}(s))f(s)\,ds\,.
\end{equation*}

\begin{notation}
	For notational ease, since we will work extensively with multilocal, polynomial distribution-valued functionals $F\in\mathcal{F}_{mloc}(\R,\mathsf{T}')$, we shall adopt a notation based on their integral kernel counterpart. Namely, we represent their $n$-th component as $F^{(n)}[\psi,\widetilde{\psi}](t_1,\ldots t_n)$, $t_1,\ldots, t_n\in\R$ and $\psi,\tilde{\psi}\in C^\infty(\R)$, formally defined via the relation
	\begin{equation}
		\langle F^{(n)}[\psi,\widetilde{\psi}],f\rangle=\int_{\R^{n}} F^{(n)}[\psi,\widetilde{\psi}](t_1,\ldots t_n)f(t_1\ldots t_n)\,dt_1\ldots dt_n,\hspace{1cm}f\in\mathcal{D}(\R^{n}).
	\end{equation}	
\end{notation}
Pursuing the analogy with Gaussian integration and Wick's theorem, the computation of expectation values with respect to the free dynamics boils down to maximally
contracting smooth fields $\psi$ with the auxiliary field $\tilde{\psi}$ in all possible ways. This operation can be formalized via a deformation of the algebraic structure of the spaces of distribution-valued functionals.
\begin{definition}\label{Def: deformation map}
	We define the \emph{contraction map} $\Gamma_{\Theta}:\mathcal{F}_{mloc}(\R,\mathsf{T}')\to\mathcal{F}(\R,\mathsf{T}')$ as the operator exponential
	\begin{equation*}
		\Gamma_\Theta:=\exp(D_\Theta)\,,
	\end{equation*}
	where $D_\Theta$ is a differential operator in the Frechet topology of $\mathcal{F}^{mloc}$ whose action reads
	\begin{equation}\label{Eq: D}
		\langle D_\Theta F(\psi,\tilde{\psi}),f\rangle:=\langle F^{(1,1)}(\psi,\tilde{\psi}),f\otimes \Theta\rangle\,,\hspace{1.5cm}f\in\mathsf{T}(\R),\psi,\tilde{\psi}\in C^\infty(\R)\,,
	\end{equation}
	or, in the kernel-based notation,
	\begin{equation*}
		D_\Theta(F)(\psi,\tilde{\psi}):=\int_{[0,T]^2}\Theta(t-t')F^{(1,1)}(\psi,\tilde{\psi})(t,t')\, dtdt'\,.
	\end{equation*} 
\end{definition}
Note that the expression in \eqref{Eq: D} is a convergent integral, since by Definition \ref{Def: multilocal functional} $supp\langle F^{(1,1)}(\psi,\tilde{\psi}),f\rangle\subset supp(f)$. The following standard result justifies out statement that $\Gamma_\Theta$ is a deformation of the algebraic structure induces by the multilocal product.  We refer the reader to \cite{dappiaggi2020algebra, dappiaggi2022microlocal} for the proof, that relies on the exponential form of the deformation. 
\begin{proposition}\label{Prop: homomorphism gamma}
	Let us introduce the space
	\begin{equation*}
		\mathcal{F}_\Theta(\R,\mathsf{T}'):=\{\Gamma_\Theta(F)\,|\; F\in \mathcal{F}(\R,\mathsf{T}')\}\,.
	\end{equation*}
	Then, $\mathcal{F}_\Theta(\R,\mathsf{T}')$ forms an associative algebra if endowed with the product $\star_\Theta:\mathcal{F}_\Theta(\R,\mathsf{T}')\times \mathcal{F}_\Theta(\R,\mathsf{T}')\to \mathcal{F}_\Theta(\R,\mathsf{T}')$ implicitely defined via the identity
	\begin{equation*}
		F\star_\Theta H:=\Gamma_\Theta\left[\Gamma_\Theta^{-1}(F)\cdot \Gamma_\Theta^{-1}(H)\right]\,,\hspace{1.5cm}\forall F,H\in \mathcal{F}_\Theta(\R,\mathsf{T}')\,.
	\end{equation*}
\end{proposition}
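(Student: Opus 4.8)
The plan is to exploit the fact that $\star_\Theta$ is, by its very construction, the transport of the multilocal product $\cdot$ along the map $\Gamma_\Theta$; hence associativity of $\star_\Theta$ will follow formally from associativity of $\cdot$ as soon as $\Gamma_\Theta$ is shown to be a linear bijection from $\mathcal{F}_{mloc}(\R,\mathsf{T}')$ onto its image $\mathcal{F}_\Theta(\R,\mathsf{T}')$ whose inverse is again of the same type. Accordingly I would split the argument into (i) well-definedness and invertibility of $\Gamma_\Theta$, and (ii) the purely algebraic transport of associativity.

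For step (i) I would first observe that on a multilocal polynomial functional $F\in\mathcal{F}_{mloc}(\R,\mathsf{T}')$ the operator $D_\Theta$ strictly lowers the total polynomial degree in the pair $(\psi,\tilde\psi)$ by two, since by \eqref{Eq: D} it contracts exactly one $\psi$-slot with one $\tilde\psi$-slot against the kernel $\Theta$. Thus $D_\Theta$ is locally nilpotent: applied to any fixed $F$, the series $\Gamma_\Theta=\exp(D_\Theta)=\sum_{n\geq 0}\tfrac{1}{n!}D_\Theta^{\,n}$ terminates after finitely many terms, so $\Gamma_\Theta$ is a well-defined linear endomorphism, the individual pairings being convergent integrals thanks to the support property $\mathrm{supp}\langle F^{(1,1)}(\psi,\tilde\psi),f\rangle\subset\mathrm{supp}(f)$ recorded after Definition \ref{Def: deformation map}. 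I would then establish the group law $\Gamma_{\Theta_1}\Gamma_{\Theta_2}=\Gamma_{\Theta_1+\Theta_2}$, which rests on two facts: $D$ is linear in its kernel, and the operators $D_{\Theta_1},D_{\Theta_2}$ commute because each acts by a mixed Fréchet derivative, and these are symmetric in their arguments, so both orders yield the same contraction of $F^{(2,2)}$ against $\Theta_1$ and $\Theta_2$. Commuting exponentials give $\exp(D_{\Theta_1})\exp(D_{\Theta_2})=\exp(D_{\Theta_1}+D_{\Theta_2})=\exp(D_{\Theta_1+\Theta_2})$. Taking $\Theta_1=\Theta$ and $\Theta_2=-\Theta$ yields $\Gamma_\Theta\Gamma_{-\Theta}=\Gamma_0=\id$, so $\Gamma_\Theta$ is bijective with $\Gamma_\Theta^{-1}=\Gamma_{-\Theta}$, and $\mathcal{F}_\Theta(\R,\mathsf{T}')$ is exactly the image of this bijection.

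For step (ii) I would note that the multilocal product $\cdot$ of Definition \ref{Def: distribution-valued functional}, being the tensor product of distributions, is bilinear, associative, and preserves $\mathcal{F}$; hence $\star_\Theta$ is well defined on $\mathcal{F}_\Theta$ and valued there, since for $F,H\in\mathcal{F}_\Theta$ one has $\Gamma_\Theta^{-1}(F),\Gamma_\Theta^{-1}(H)\in\mathcal{F}$, their product lies in $\mathcal{F}$, and $\Gamma_\Theta$ sends it back into $\mathcal{F}_\Theta$; bilinearity of $\star_\Theta$ is inherited from that of $\cdot$ and linearity of $\Gamma_\Theta^{\pm1}$. Associativity is then immediate: writing $\hat F:=\Gamma_\Theta^{-1}(F)$ and likewise for $H,K$, and using $\Gamma_\Theta^{-1}(F\star_\Theta H)=\hat F\cdot\hat H$ twice,
\[
(F\star_\Theta H)\star_\Theta K=\Gamma_\Theta\bigl[(\hat F\cdot\hat H)\cdot\hat K\bigr]=\Gamma_\Theta\bigl[\hat F\cdot(\hat H\cdot\hat K)\bigr]=F\star_\Theta(H\star_\Theta K),
\]
the middle equality being associativity of $\cdot$. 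In other words $\Gamma_\Theta$ is by construction an isomorphism of algebras from $(\mathcal{F},\cdot)$ onto $(\mathcal{F}_\Theta,\star_\Theta)$, which is precisely the assertion.

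The genuinely delicate point is not the algebra but the analysis underlying step (i): since $\Theta$ is only the Heaviside distribution, the contraction $D_\Theta$ pairs a distributional kernel with a distributional Fréchet derivative, and one must verify that this pairing and all its iterates remain well defined — in particular that the finitely many coincident-point contributions (formally weighted by $\Theta(0)$) are treated consistently. I expect this to be the main obstacle, and the way to control it is exactly the multilocal structure of Definition \ref{Def: multilocal functional}, which confines the singular support to diagonals and renders the integrals in \eqref{Eq: D} convergent; this is where I would rely on the constructions of \cite{dappiaggi2020algebra, dappiaggi2022microlocal}, the algebraic conclusion above then being formal.
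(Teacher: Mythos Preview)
Your proposal is correct and follows precisely the approach the paper alludes to: the paper does not give a self-contained proof but only refers the reader to \cite{dappiaggi2020algebra, dappiaggi2022microlocal} and remarks that the argument ``relies on the exponential form of the deformation''. Your two-step strategy --- local nilpotency of $D_\Theta$ on polynomial (multilocal) functionals together with the group law $\Gamma_{\Theta_1}\Gamma_{\Theta_2}=\Gamma_{\Theta_1+\Theta_2}$ to establish invertibility, followed by the formal transport of associativity along the bijection $\Gamma_\Theta$ --- is exactly that standard argument spelled out, and your identification of the coincident-point/$\Theta(0)$ pairing as the only genuinely analytical issue matches the paper's own discussion in Remark~\ref{Rem: problem products}.
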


\begin{remark}\label{Rem: problem products}
	If $F\in\mathcal{F}_{mloc}(\R,\mathsf{T}')$, Definition \ref{Def: multilocal functional} entails that $F^{(1,1)}(\psi,\tilde{\psi})$ is a distribution supported on the diagonal. Thus, the action of $D_\Theta$ on $\mathcal{F}_{mloc}(\R,\mathsf{T}')$ is a priori hindered by the presence of the ill-defined value of $\Theta$ at the origin. Formally, this can be pictured as having to cope with the product of distributions $\theta(t-t')\delta(t-t')$, which we shall denote with a slight abuse of notation as $\Theta(0)$. The latter is evidently ill-defined since different regularizations $\Theta_\varepsilon\in C^\infty(\mathbb{R})$ of $\Theta$  obtained via the convolution with a mollifier $\rho_\varepsilon\in\mathcal{D}(\R)$ lead to different limiting values $\theta(0):=\lim_{\varepsilon\to 0}\theta_\varepsilon\delta$.
	This arbitrariness is rather natural and reflects an analogous freedom in adopting a specific finite-dimensional approximation of the SDE in the heuristic derivation of the MSR path integral, see \emph{e.g.} \cite{lau2007state}. In Definition \ref{Def: deformation map} we are implicitly considering an arbitrary but fixed choice of $\Theta(0)$. Yet, different extensions of the Heaviside theta lead to different contraction maps. We observe that any particular choice of $\Theta(0)$ does not constitute a loss of generality, we recall that Proposition \ref{Prop: homomorphism gamma} guarantees that different choices lead to isomorphic algebras. More specifically, for any but fixes $\Theta,\bar{\Theta}$ such that $\Theta(t)=\bar{\Theta}(t)$ for all $t\neq0$, the isomorphism between $(\mathcal{F}_\Theta(\R,\mathsf{T}'),\star_{\Theta})$ and $(\mathcal{F}_{\bar{\Theta}}(\R,\mathsf{T}'),\star_{\bar{\theta}})$ is realized by the map $\Gamma_{\bar{\Theta}-\Theta}$ in the sense that, for any $F_1,F_2\in\mathcal{F}_\Theta(\R,\mathsf{T}')$ 
	\begin{align*}
		\Gamma_{\bar{\Theta}-\Theta}\left[\Gamma_{\bar{\Theta}-\Theta}^{-1}(F_1)\star_{\Theta}\Gamma_{\bar{\Theta}-\Theta}^{-1}(F_2)\right]=F_1\star_{\bar{\Theta}} F_2\,.
	\end{align*}
	This sheds some light on an interesting interpretation of the arbitrariness in defining a stochastic integration theory as an example of renormalization involving finite counterterms. We postpone further investigations on the connection between stochastic integral prescriptions and renormalization to a future work.
\end{remark}
We introduce a nonlocal version of the contraction map of Definition \eqref{Def: deformation map}
\begin{equation*}
	\Gamma_{\Theta}^{mloc}:\mathcal{F}_{mloc}(\R,\mathsf{T}')\,\hat{\otimes}\,\mathcal{F}_{mloc}(\R,\mathsf{T}') \rightarrow\mathcal{F}_{mloc}(\R,\mathsf{T}')\,,
\end{equation*} 
whose action on $F_1,F_2\in\mathcal{F}_{mloc}(\R,\mathsf{T}')$ reads
\begin{align}\label{Eq: nonlocal gamma}
	&\Gamma_{\Theta}^{mloc}(F_1\hat{\otimes} F_2):=m\circ e^{D^{mloc}_{\Theta}}(F_1\hat{\otimes} F_2)\,,\\
	&D^{mloc}_{\Theta}(F_1\hat{\otimes} F_2)((\psi,\tilde{\psi})\otimes (\psi',\tilde{\psi}'))\nonumber\\
	&\hspace{1.8cm}:=\int_{\R^2}\Theta(t,t')\left(F_1^{(1,0)}(t)\hat{\otimes} F_2^{(0,1)}(t')+ F_1^{(0,1)}(t)\,\hat{\otimes}\,  F_2^{(1,0)}(t')\right)((\psi,\tilde{\psi})\otimes (\psi',\tilde{\psi}'))\,dtdt'\,,\nonumber
\end{align}
for $\psi,\tilde{\psi}, \psi',\tilde{\psi}'\in C^\infty(\R)$. Here $\hat{\otimes}$ denotes the tensor product of functionals
\begin{equation*}
	\langle(F_1\hat{\otimes}F_2)((\psi,\tilde{\psi})\otimes (\psi',\tilde{\psi}')),f\otimes g\rangle:=\langle F_1(\psi,\tilde{\psi}),f\rangle\langle F_2(\psi',\tilde{\psi}'),g\rangle\,,
\end{equation*}
while
$m:\mathcal{F}_{mloc}(\R,\mathsf{T}')\hat{\otimes}\mathcal{F}_{mloc}(\R,\mathsf{T}') \rightarrow\mathcal{F}_{mloc}(\R,\mathsf{T}')$ is the pull-back on the diagonal of $C^\infty(\R)\times C^\infty(\R)$.
The contraction maps $\Gamma_{\Theta}$ and $\Gamma_{\Theta}^{mloc}$ are linked by the identity
\begin{equation}\label{Eq: interplay local and nonlocal Gamma}
	\Gamma_{\Theta}(F_1\cdot F_2)=\Gamma_{\Theta}^{mloc}\left(\Gamma_{\Theta}(F_1)\hat{\otimes}\Gamma_{\Theta}(F_2)\right),
\end{equation}
for all $F_1,F_2\in\mathcal{F}_{loc}(\R,\mathsf{T})$.
At the practical level, the relation in \eqref{Eq: interplay local and nonlocal Gamma} states that we can separate \emph{local contractions} of fields $\psi$ and $\widetilde{\psi}$ belonging to the same local distribution-valued functional $F_i$ from \emph{nonlocal} contractions between $F_1$ and $F_2$.

\endappendix

\bibliographystyle{alpha}

\bibliography{bibliography}

\end{document}